\documentclass[11pt]{amsart}
\usepackage{amsmath,amssymb,amsthm,mathrsfs}
\usepackage[all]{xy}

\newtheorem{Def}{Definition}[section]
\newtheorem{Thm}[Def]{Theorem}
\newtheorem{Prop}[Def]{Proposition}
\newtheorem{Cor}[Def]{Corollary}
\newtheorem{Lem}[Def]{Lemma}
\newtheorem{Ex}[Def]{Example}
\newtheorem{Conj}[Def]{Conjecture}

\newcommand{\C}{\mathbb{C}}

\newcommand{\Z}{\mathbb{Z}}
\newcommand{\N}{\mathbb{N}}

\newcommand{\GL}{\mathop{\mathrm{GL}}\nolimits}

\newcommand{\rk}{\mathop{\mathrm{rank}}\nolimits}

\newcommand{\StrO}{\mathop{\mathscr{O}}\nolimits}

\begin{document}
\title[Pfaffian Calabi--Yau Threefolds and Mirror Symmetry]
{Pfaffian Calabi--Yau Threefolds \\ and Mirror Symmetry}
\author{Atsushi Kanazawa}
\date{}

\subjclass[2010]{14J32, 14F33} 
\keywords{pfaffian, Calabi--Yau threefold, mirror symmetry, Picard--Fuchs equation, Gromov--Witten invariant, BPS invariant}
\maketitle

\begin{abstract}
The aim of this article is to report on recent progress in understanding mirror symmetry for some non-complete intersection Calabi--Yau threefolds.
We first construct four new smooth non-complete intersection Calabi--Yau threefolds with $h^{1,1}=1$, 
whose existence was previously conjectured by C. van Enckevort and D. van Straten in \cite{van2}. 
We then compute the period integrals of candidate mirror families of F. Tonoli's degree 13 Calabi--Yau threefold and three of the new Calabi--Yau threefolds. 
The Picard--Fuchs equations coincide with the expected Calabi--Yau equations listed in \cite{van1,van2}. 
Some of the mirror families turn out to have two maximally unipotent monodromy points. 
\end{abstract}

\section{Introduction}
The aim of this article is to report on recent progress in understanding mirror symmetry for some non-complete intersection Calabi--Yau threefolds.
Throughout this paper we adopt the following definition. 
\begin{Def}
A $d$-dimensional Calabi--Yau variety $X$ is a normal compact variety over $\C$ with at worst Gorenstein canonical singularities 
and with trivial dualizing sheaf $\omega_{X} \cong \mathscr{O}_{X}$ such that $H^{i}(X, \mathscr{O}_{X})=0, \ ( i = 1, \dots d-1)$. 
\end{Def}
Among smooth Calabi--Yau threefolds, those with 1-dimensional K\"{a}hler moduli spaces have been attracting much attention 
because their expected mirror partners have 1-dimensional complex moduli spaces and hence one can work on them in detail. 
There are around thirty known examples of topologically distinct smooth Calabi--Yau threefolds with $h^{1,1}=1$, 
most of which are complete intersections of hypersurfaces in toric varieties or homogeneous spaces. 
Although non-complete intersection Calabi--Yau threefolds are only partially explored, 
they are intriguing on their own and provide important testing grounds for mirror symmetry. 
We hope that new non-complete intersection Calabi--Yau threefolds with $h^{1,1}=1$ and mirror phenomena we report in this paper are of interest and will be the first step toward the future investigations. 
This paper is clearly influenced by E. R\o dland's work \cite{rod} and we owe a lot of arguments to it. 
We mention it here and do not repeat it each time in the sequel.\\

In the following we give a brief overview of this paper. 
Section 2 is mainly devoted to the study of pfaffian threefolds in weighted projective spaces. 
Pfaffian Calabi--Yau threefolds in $\mathbb{P}^{6}$ was first studied by F. Tonoli in his thesis \cite{to}.  
By replacing the ambient space $\mathbb{P}^{6}$ by weighted projective spaces, 
we obtain several new low degree Calabi--Yau threefolds with $h^{1,1}=1$.   
We then determine their fundamental topological invariants $\int_{X}H^{3}$, $\int_{X}c_{2}(X)\cdot H$ and $\int_{X}c_{3}(X)$, 
which determine the diffeomorphism class of $X$ when $X$ is simply connected and $h^{1,1}=1$ (Wall's classification theorem \cite{Wall}).  
The main result of Section 2 is the following.   
\begin{Thm}
There exist pfaffian threefolds $X_{5}$, $X_{7}$, $X_{10}$ and $X_{25}$, which are smooth and Calabi--Yau with the following topological invariants.
\begin{center}
 \begin{tabular}{|c|c|c|c|c|c|c|}  \hline
 $X_{i}$  & $h^{1,1}$ & $h^{1,2}$ & $\int_{X_{i}}H^{3}$ & $\int_{X_{i}}c_{2}(X_{i})\cdot H$ \\ \hline
 $X_{5}$ & $1$ & $51$ & $5$  &  $38$  \\ \hline
 %$Y_{5}$ & $1$ &  $156$  &  $5$  &  $62$ \\ \hline
 $X_{7}$ &$1$ & $61$ & $7$ &  $46$  \\ \hline
$X_{10}$ &  $1$ & $59$ & $10$ & $52$\\ \hline
$X_{25}$ &  $1$ & $51$ & $25$ & $70$\\ \hline
 \end{tabular}
 \end{center}
\end{Thm}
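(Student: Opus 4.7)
The strategy, following Tonoli's treatment of the $\mathbb{P}^{6}$ case, is uniform across the four families. For each $i \in \{5,7,10,25\}$, present $X_{i}$ as the locus where the five $4\times 4$ pfaffians of a sufficiently general skew-symmetric $5\times 5$ matrix $M$ vanish, with the entries of $M$ chosen as sections of prescribed line bundles on the weighted projective space $Y_{i}$ from Section~2. By the Buchsbaum--Eisenbud structure theorem, the resulting ideal is Gorenstein of codimension $3$ and admits a canonical self-dual resolution
\begin{equation*}
0 \longrightarrow \mathcal{O}_{Y_{i}}(-s) \longrightarrow \bigoplus_{j=1}^{5} \mathcal{O}_{Y_{i}}(-b_{j}) \longrightarrow \bigoplus_{j=1}^{5} \mathcal{O}_{Y_{i}}(-a_{j}) \longrightarrow \mathcal{O}_{Y_{i}} \longrightarrow \mathcal{O}_{X_{i}} \longrightarrow 0
\end{equation*}
with $a_{j}+b_{j}=s$ for all $j$. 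Arranging $s$ to equal the sum of the weights of $Y_{i}$ makes adjunction give $\omega_{X_{i}} \cong \mathcal{O}_{X_{i}}$, and the vanishings $H^{1}(\mathcal{O}_{X_{i}}) = H^{2}(\mathcal{O}_{X_{i}}) = 0$ are read off the two short exact sequences extracted from the resolution, combined with the standard Bott-type vanishing for negatively twisted line bundles on $Y_{i}$.

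\textbf{Smoothness.} This is the first genuinely delicate point in the weighted setting. A Bertini-type argument applied to the affine cone handles all smooth points of $Y_{i}$, but the quotient-singular strata must be treated separately. I would first analyse which of these strata meet the base locus of the pfaffian system, and then verify by an explicit choice of $M$ over $\mathbb{Q}$ (or, for efficiency, reduced modulo a prime) that the four $X_{i}$ avoid $\mathrm{Sing}(Y_{i})$ and have jacobian of maximal rank at every closed point. Upper semicontinuity then promotes the chosen matrix to a generic statement.

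\textbf{Topological invariants.} With the resolution in hand, the Hilbert polynomial of $\mathcal{O}_{X_{i}}$ on $Y_{i}$ is an alternating sum of the Hilbert polynomials of the five twists appearing, and its leading coefficient yields $\int_{X_{i}} H^{3}$. To extract $\int_{X_{i}} c_{2}(X_{i}) \cdot H$, combine the conormal sequence
\begin{equation*}
0 \longrightarrow \mathcal{N}_{X_{i}/Y_{i}}^{\vee} \longrightarrow \Omega_{Y_{i}}^{1}|_{X_{i}} \longrightarrow \Omega_{X_{i}}^{1} \longrightarrow 0
\end{equation*}
with the (weighted) Euler sequence for $Y_{i}$, computing the Chern character of $\mathcal{N}_{X_{i}/Y_{i}}^{\vee}$ from the tail of the Buchsbaum--Eisenbud resolution via the additivity of $\mathrm{ch}$ on exact sequences. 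Multiplying the resulting expression for $c_{2}(X_{i})$ by $H$ and integrating delivers the third column. The equality $h^{1,1}(X_{i})=1$ then follows from a weighted Lefschetz-type argument applied to the ample generator of $\mathrm{Pic}(Y_{i}) = \mathbb{Z}\cdot H$, and the Euler characteristic $\int_{X_{i}} c_{3}(X_{i})$, computed by the same Chern class machinery, pins down $h^{1,2}$ via the standard Calabi-Yau identity $\chi = 2(h^{1,1}-h^{1,2})$.

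The main obstacle I anticipate is the smoothness step: because the ambient spaces $Y_{i}$ are singular, the classical Bertini dichotomy breaks along the quotient-singular strata, and I expect that each of the four cases will require an explicit computer-algebra verification along the lines sketched above, rather than a purely theoretical argument.
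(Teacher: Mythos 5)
Your skeleton — the Buchsbaum--Eisenbud self-dual resolution, the condition "twist sum $=$ sum of the weights" forcing $\omega_{X_i}\cong\mathscr{O}_{X_i}$, the vanishing of $H^1(\mathscr{O}_{X_i})$ and $H^2(\mathscr{O}_{X_i})$ from the resolution, the degree from the Hilbert polynomial, and the separate treatment of the quotient-singular strata — is exactly the paper's. Two of your variants are acceptable but differ from the text: for smoothness the paper does \emph{not} resort to an explicit matrix plus semicontinuity; after checking by hand that a generic $N$ avoids $\mathrm{Sing}(\mathbb{P}_{\mathbf{w}_i})$ (e.g.\ $\mathrm{Pfaff}(5)\cap\mathbb{P}^2=\emptyset$ for $X_5$), it runs a genuine Bertini argument on the universal rank-$2$ degeneracy locus inside $\mathbb{P}_{\mathbf{w}_i}^{sm}\times H^0(\wedge^2\mathscr{E}_i(t))$ and applies generic smoothness of the projection to the space of sections. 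For $\int c_2\cdot H$ the paper avoids Chern classes of the normal bundle altogether: it uses Hirzebruch--Riemann--Roch in the form $\chi(\mathscr{O}_{X_i}(H))=\tfrac16\deg X_i+\tfrac1{12}\int c_2\cdot H$ together with Kodaira vanishing and the count $h^0(X_i,\mathscr{O}_{X_i}(H))=h^0(\mathbb{P}_{\mathbf{w}_i},\mathscr{O}(H))$.

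The genuine gap is in the Hodge numbers, which is the only hard part of the theorem. Your plan to extract $\mathrm{ch}(\mathscr{N}^{\vee}_{X_i/Y_i})$ "from the tail of the Buchsbaum--Eisenbud resolution" does not work: that complex resolves $\mathscr{O}_{X_i}$ as a sheaf on the ambient space and does not remain exact after restriction to $X_i$, so it carries no direct information about $\mathscr{I}_{X_i}/\mathscr{I}_{X_i}^2$. Since $X_i$ is not a complete intersection, controlling the conormal sheaf requires the separate Boffi--Buchsbaum resolution of $\mathscr{I}_{X_i}^2$, which is the key input of the paper (its Lemmas on $\mathscr{F}_\bullet\to\mathscr{I}_{X_i}^2$) and is absent from your proposal. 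The paper then computes $h^{1,2}$ by a direct cohomological chase through the weighted Euler sequence, the conormal sequence, and the sequence $0\to\mathscr{I}_X^2\to\mathscr{I}_X\to\mathscr{N}^\vee\to 0$, arriving at $h^{2}(\varOmega_X)=\sum_{i}(-1)^{i+1}h^6(\mathbb{P}_{\mathbf{w}},\mathscr{F}_i)-h^3(\bigoplus_i\mathscr{O}_X(-w_i))$; it never computes $c_3$, and $h^{1,1}=1$ falls out of the same exact sequences rather than from a Lefschetz theorem (which you cannot simply invoke for a codimension-$3$ subvariety of a singular weighted projective space). Finally, your uniform single-matrix setup does not cover $X_{25}$ at all: it is the \emph{common} degeneracy locus of two skew forms on $\mathbb{P}^9$, i.e.\ $i_1(\mathrm{Gr}(2,5))\cap i_2(\mathrm{Gr}(2,5))$, of codimension $6$, and the paper treats it by a two-step argument (first pass to $\mathrm{Gr}(2,5)$, then take the pfaffian locus of the second form inside it), with smoothness coming from the Grassmannian description.
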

The existence of Calabi--Yau threefolds with these topological invariants was previously conjectured by C. van Enckevort and D. van Straten 
based on the classification of Calabi--Yau equations in \cite{van1,van2}. \\

In Section 3 and 4, we report on mirror symmetry for these Calabi--Yau threefolds. 
A pfaffian Calabi--Yau threefold $X_{13}$ of degree $13$ was constructed by F. Tonoli \cite{to} 
and later a candidate mirror family of $X_{13}$ was proposed by J. B\"{o}hm from the viewpoint of tropical geometry \cite{boehm}. 
We confirm the proposal by computing the Picard--Fuchs equation of the family. 
After computing the conjectural genus $g=0,1$ BPS invariants $n_{d}^{g} \ (d\in \N)$,  
we heuristically determine the number of degree 1 rational curves in $X_{13}$ and find that it coincides with $n_{1}^{0}$ as mirror symmetry predicts. 
Interestingly, the mirror family has a special point where all the indices of the Picard--Fuchs operator are $1/2$, 
in addition to the usual maximally unipotent monodromy point at $0 \in \mathbb{P}^1$. 
This observation is further discussed in comparison with E. R\o dland's work \cite{rod}. \\

Although the existence of mirror family of a given Calabi--Yau threefold is highly non-trivial, 
inspired by the mirror family of $X_{13}$, we exhibit explicit mirror families of the Calabi--Yau threefolds $X_{5}$, $X_{7}$ and $X_{10}$. 
We verify that their Picard--Fuchs equations coincide with the expected Calabi--Yau equations listed in \cite{van1,van2}.  
A general member of these families is quite singular and it has not been settled yet whether a general member of the families admits any crepant resolution or not. \\

Section 5 studies a degree 9 pfaffian Calabi--Yau threefold $X_{9} \subset \mathbb{P}(1^{6},2)$, 
which is isomorphic to a complete intersection Calabi--Yau threefold $\mathbb{P}^{5}_{3^{2}}$. 
This twofold interpretation yields non-isomorphic special one-parameter families, both of which have the same Picard-Fuchs equation. 
These two families may bridge our pfaffian mirror construction and the conventional Batyrev--Borisov mirror construction.\\

It is worth mentioning a relevant work; 
based on the results of this paper, physicists M. Shimizu and H. Suzuki studied open mirror symmetry for our pfaffian Calabi--Yau threefolds \cite{SS}.

\subsection*{Aknowledgement}
The author would like to express his gratitude to Shinobu Hosono for his dedicated support and continuous encouragement. 
He greatly appreciates many helpful discussions with Makoto Miura at various stage of this work. 
His thanks also go to Duco van Straten for letting him know the new data base \cite{van1} and comments on the preliminary version of this paper. 

\section{Pfaffian Calabi--Yau Threefolds}

\subsection{Pfaffian Threefolds in Projective Spaces}
Suppose that $R$ is a regular local ring and $I\subset R$ is an ideal of height 1 or 2. 
J. -P. Serre proved that $R/I$ is Gorenstein if and only if it is complete intersection. 
This is no longer true for height 3 ideals, but such Gorenstein ideals are characterized as pfaffian ideals of certain skew-symmetric matrices \cite{be}. 
This observation suggests that pfaffian varieties form a reasonable class of varieties to study when we investigate non-complete intersection Gorenstein varieties.  
In this subsection we review the basics of pfaffians in order to make this paper self-contained. \\

Throughout this paper we work over complex numbers $\C$. 
Let SkewSym$(n,\C)$ be the set of  $n\times n$ skew symmetric matrices. 
For $N=(n_{i,j}) \in $ SkewSym$(n,\C)$ the pfaffian Pf$(N)$ is defined as
$$
 \mathrm{Pf}(N)= \frac{1}{r!2^{r}}\sum_{\sigma \in \mathfrak{S}_{2r}}\mathrm{sign}(\sigma)\prod_{i=1}^{r} n_{\sigma(i)\sigma(r+i)}
$$
 if $n=2r$ is even, and Pf$(N)=0$ if $n$ is odd. 
 It can be check that Pf$(N)^{2}=\mathrm{det}(N)$.
 We define $N_{i_{1},\dots,i_{l}}$ as a skew-symmetric matrix obtained by removing all the $i_{j}$-th rows and columns from $N$,
 and set $P_{i_{1},\dots,i_{l}}=\mathrm{Pf}(N_{i_{1},\dots,i_{l}})$. 
 Let us next assume that $n$ is odd, say $n=2r+1$.   
 The adjoint matrix adj$(N)$ of $N$ is the rank 1 matrix given by 
$$
 \mathrm{adj}(N)=P\cdot P^{t}, \ \ P=(P_{1},P_{2},\dots,P_{2r+1})^{t}.
$$
We then have $P\cdot N=\det(N)\cdot E_{n}=0$ and if rank$(N)=2r$ then $P_{1},\dots,P_{2r+1}$ generate $Ker(N)$. 
$\GL(n,\C)$ acts on SkewSym$(n,\C)$ by conjugation with a finite number of orbits $\{O_{2i}\}_{i=0}^{r}$ 
where the orbit $O_{2i}$ consists of all skew-symmetric matrices of rank $2i$. 
The closure $\overline{O_{2i}}$ of $O_{2i}$ is singular along its boundary $\overline{O_{2i}}\setminus O_{2i}$ which consists of the union of $\{O_{2j}\}_{j=0}^{i-1}$. \\

Let us first recall the construction of pfaffian varieties in $\mathbb{P}^{n} \ (n>3)$. 
From now on we shall always identify $H^{2}(\mathbb{P}^{n},\Z)\cong \Z$ via the hyperplane class. 
Given an integer $t \in \mathbb{Z}$ and a locally free sheaf $\mathscr{E}$ of odd rank $2r+1$ on $\mathbb{P}^{n}$,  
a global section $N \in H^{0}(\mathbb{P}^{n}, \wedge^{2}\mathscr{E}(t))$ defines an alternating morphism
$\mathscr{E}^{\vee}(-t)\stackrel{N}{\rightarrow} \mathscr{E}$.
The pfaffian complex associated to $(t,\mathscr{E},N)$ is given by 
$$
0 \longrightarrow \mathscr{O}_{\mathbb{P}^{n}}(-t-2s) \stackrel{P^{t}}{\longrightarrow}
\mathscr{E}^{\vee}(-t-s)\stackrel{N}{\longrightarrow}
\mathscr{E}(-s) \stackrel{P}{\longrightarrow} \mathscr{O}_{\mathbb{P}^{n}},
$$
where $s=c_{1}(\mathscr{E})+rt$ and $P$ is defined as
$$
P = \frac{1}{r!}\wedge^{r} N \in H^{0}(\mathbb{P}^{n},\wedge^{2r}\mathscr{E}(rt)).
$$
The first and third morphisms are given by taking the wedge product with $P$ and $P^{t}$ respectively.  
Once we fix a basis of sections $e_{1},\dots, e_{2r+1}$ of $\mathscr{E}$, $N$ may be expressed as a matrix and then $P$ is given by  
$$P=\sum_{i=1}^{2r+1}\mathrm{Pf}(N_{i})\bigwedge_{j\neq i}e_{j}.$$
\begin{Def}
A projective variety $X \subset \mathbb{P}^{n}$ is called the pfaffian variety associated to $(t,\mathscr{E},N)$ 
if the structure sheaf $\mathscr{O}_{X}$ is given by Coker$(P)$. 
The sheaf Im$(P)$ $\subset \mathscr{O}_{\mathbb{P}^{n}}$ is called the pfaffian ideal sheaf of $X$ and denoted by $\mathscr{I}_{X}$. 
\end{Def}

The twist $t$ is usually fixed and often omitted without harm. 
In case a choice of global section $N$ is not explicitly specified, it is understood that $N$ is a general element of $H^{0}(\mathbb{P}^{n}, \wedge^{2}\mathscr{E}(t))$. 
In his thesis \cite{to}, F. Tonoli constructed several smooth Calabi--Yau threefolds with $h^{1,1}=1$ in $\mathbb{P}^{6}$, 
using the following (globalized version of the classical) theorem of D. A. Buchsbaum and D. Eisenbud. 

\begin{Thm}[D. A. Buchsbaum and D. Eisenbud \cite{be}]
Let $X \subset \mathbb{P}^{n}$ be a pfaffian variety associated to $(t,\mathscr{E},N)$. 
$X$ is then the degeneracy locus of the skew-symmetric map $N$ and if $N$ is generically of rank 2r it degenerates to rank 2r-2 in the expected codimension 3, 
in which case, the pfaffian complex gives the self-dual resolution of the ideal sheaf of $X$. 
Moreover, $X$ is locally Gorenstein, subcanonical with $\omega_{X} \cong \mathscr{O}_{X}(t+2s-n-1)$.
\end{Thm}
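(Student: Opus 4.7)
The plan is to reduce every assertion to the classical local Buchsbaum--Eisenbud structure theorem for height $3$ Gorenstein ideals in a regular local ring, and then to globalize by working in local trivializations of $\mathscr{E}$. First, I would identify the support: working locally with a basis $\{e_i\}$ of $\mathscr{E}$ so that $N$ is a skew-symmetric matrix of size $2r+1$, the explicit formula $P=\sum_i P_i\bigwedge_{j\neq i}e_j$ shows that $\mathrm{Im}(P)$ is locally generated by the submaximal pfaffians $P_1,\dots,P_{2r+1}$, which vanish simultaneously exactly when $\mathrm{rank}(N)<2r$. Hence $X$, defined as the support of $\mathrm{Coker}(P)$, coincides with the degeneracy locus of $N$.

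Next, under the genericity assumption the locus $\{\mathrm{rank}\,N\leq 2r-2\}$ has the expected codimension $3$ in $\mathbb{P}^n$, matching the codimension of $\overline{O_{2r-2}}\subset\mathrm{SkewSym}(2r+1,\mathbb{C})$. The local Buchsbaum--Eisenbud theorem then asserts that the local ideal $(P_1,\dots,P_{2r+1})$ is a height $3$ Gorenstein ideal and that the associated local pfaffian complex is acyclic; since exactness is local and our global complex glues from these local ones, it gives a resolution of $\mathscr{O}_X$ of length $3$. The \emph{self-duality} is then a direct inspection: applying $\mathscr{H}om(-,\mathscr{O}(-t-2s))$ to the complex returns the same complex, because $N^{\vee}=-N$ by skew-symmetry and the two extreme terms $\mathscr{O}(-t-2s)$ and $\mathscr{O}$ are interchanged, with $P^t$ and $P$ swapped at the two ends.

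Finally, to compute $\omega_X$ and obtain the local Gorenstein property, I would invoke Grothendieck duality for the length-$3$ locally free resolution: $\omega_X \cong \mathscr{E}xt^{3}_{\mathscr{O}_{\mathbb{P}^n}}(\mathscr{O}_X,\omega_{\mathbb{P}^n})$. Dualizing the resolution and using the self-duality established in the previous step identifies the relevant $\mathscr{E}xt^{3}$ as the cokernel at the end of the dualized complex, namely $\mathscr{O}_X(t+2s)$. Tensoring with $\omega_{\mathbb{P}^n}=\mathscr{O}(-n-1)$ yields $\omega_X \cong \mathscr{O}_X(t+2s-n-1)$, and the very fact that this $\mathscr{E}xt^{3}$ is an invertible $\mathscr{O}_X$-module gives the subcanonical and locally Gorenstein properties.

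The main obstacle is genuinely the local Buchsbaum--Eisenbud theorem itself, which is the nontrivial input: every height $3$ Gorenstein ideal in a regular local ring arises as submaximal pfaffians of an odd skew-symmetric matrix, and the associated complex is acyclic. Once this is taken as given, the remaining work is careful bookkeeping---propagating the generic rank-drop hypothesis to an everywhere statement using the codimension of the singular stratum of $\overline{O_{2r-2}}$, and tracking twists through the duality to confirm the precise shift $t+2s-n-1$.
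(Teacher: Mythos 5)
The paper offers no proof of this statement: it is quoted as a classical result of Buchsbaum--Eisenbud (globalized following Tonoli), so there is nothing internal to compare your argument against. Your sketch is the standard and correct route --- reduce to the local structure theorem for grade-$3$ Gorenstein ideals in local trivializations, check the vanishing locus of the submaximal pfaffians is the rank $\le 2r-2$ locus, use the codimension-$3$ hypothesis for acyclicity, and read off $\omega_{X}\cong\mathscr{E}xt^{3}_{\mathscr{O}_{\mathbb{P}^{n}}}(\mathscr{O}_{X},\omega_{\mathbb{P}^{n}})\cong\mathscr{O}_{X}(t+2s-n-1)$ from the self-dual resolution; this last step is exactly the computation the paper does carry out in its Proposition~2.1 for the weighted projective case. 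The one input you correctly flag as a black box, the local Buchsbaum--Eisenbud theorem, is precisely what the paper also takes on faith from \cite{be}.
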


Let $X$ be a pfaffian Calabi--Yau variety of dimension 3 in $\mathbb{P}^{6}$. 
Here we have $t+2s=7$. 
By applying suitable twists, we can assume that $s=3$ and henceforth we consider the pfaffian complex of the following type.
$$
0 \longrightarrow \mathscr{O}_{\mathbb{P}^{6}}(-7) \stackrel{P^{t}}{\longrightarrow}
\mathscr{E}^{\vee}(-4)\stackrel{N}{\longrightarrow}
\mathscr{E}(-3) \stackrel{P}{\longrightarrow} \mathscr{O}_{\mathbb{P}^{n}}
\longrightarrow \mathscr{O}_{X} \longrightarrow 0
$$

It is natural to expect some bounding of topological invariants of pfaffian Calabi--Yau threefolds in $\mathbb{P}^{6}$. 
To see the range of possible degree, we reduce the question to the compact complex surface theory by taking a hyperplane section. 
Let $S$ be a compact, smooth complex surface. 
There are two important numerical invariants of $S$, namely the geometric genus $p_{g}(S)=\dim H^{0}(S,K_{S})$ and the self-intersection of the canonical divisor $K_{S}^{2}$. 

\begin{Thm}[Castelnuovo inequality] Let $S$ be a minimal surface of general type.
 If the canonical map $\Phi_{|K_{S}|}:S \rightarrow \mathbb{P}^{n}$ is birational to the image, then $K_{S}^{2} \ge 3p_{g}(S)-7$.
\end{Thm}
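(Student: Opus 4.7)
The plan is to analyze the birational map $\Phi_{|K_{S}|}$ and its image, and to bound $K_{S}^{2}$ from below via adjunction on a canonical curve. Decompose the canonical system as $|K_{S}| = |M| + F$ into its moving and fixed parts, and resolve base points by $\sigma: \tilde{S} \to S$ so that $\tilde{M} := \sigma^{*}M - E$ is base point free. The induced morphism $\phi: \tilde{S} \to \Sigma \subset \mathbb{P}^{p_{g}-1}$ is birational by hypothesis, and $\Sigma$ is a non-degenerate irreducible surface of degree $d = \tilde{M}^{2} \le M^{2}$.

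First I would invoke the classical inequality $d \ge p_{g}-2$ for non-degenerate irreducible surfaces in $\mathbb{P}^{p_{g}-1}$, giving $M^{2} \ge p_{g}-2$. To sharpen this, I would pass to a general smooth irreducible curve $C \in |M|$: the restriction sequence $0 \to \mathcal{O}_{S} \to \mathcal{O}_{S}(M) \to \mathcal{O}_{C}(M) \to 0$ yields $h^{0}(C, M|_{C}) \ge p_{g}-1$, and since $\Phi_{|M|}$ is birational the induced map $\Phi_{|M|_{C}}$ is birational onto a non-degenerate curve of degree $M^{2}$ in $\mathbb{P}^{p_{g}-2}$. Applying Clifford's theorem (if $M|_{C}$ is special) or Riemann--Roch (otherwise), in combination with Castelnuovo's genus bound for curves, and then comparing with the adjunction formula $2g(C)-2 = K_{S}\cdot M + M^{2}$, yields a lower bound on $K_{S}\cdot M$. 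Writing $K_{S}^{2} = K_{S}\cdot M + K_{S}\cdot F$ and using $K_{S}\cdot F \ge 0$ (since $K_{S}$ is nef on the minimal model and $F$ effective) should produce $K_{S}^{2} \ge 3p_{g}-7$.

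The main obstacle is the borderline case $d = p_{g}-2$, which by the Del Pezzo--Bertini classification forces $\Sigma$ to be a surface of minimal degree: a rational normal scroll, the Veronese surface in $\mathbb{P}^{5}$, or a cone over a rational normal curve. In all of these, a general hyperplane section is rational, so the curve-genus bound collapses and the naive chain of estimates delivers only Noether's bound $K_{S}^{2} \ge 2p_{g}-4$ rather than the desired $3p_{g}-7$. The gap must be recovered by pulling back the ruling of $\Sigma$ (or the family of conics on the Veronese) to an additional base-point-free pencil on $\tilde{S}$, whose intersection number with $K_{S}$ supplies the missing $p_{g}-3$. Carefully treating these finitely many minimal-degree cases is the delicate part of the proof and is precisely what pins down the coefficient $3$ and the constant $-7$.
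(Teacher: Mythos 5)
The paper offers no proof of this statement---it is quoted as a classical theorem (Castelnuovo's inequality, cf.\ Beauville or Barth--Peters--Van de Ven)---so your proposal has to stand on its own, and as written it does not close. You have gathered the right ingredients (the decomposition $|K_{S}|=|M|+F$, resolution of base points, restriction to a general $C\in|\tilde{M}|$, Clifford, adjunction, Castelnuovo's genus bound), but the assembly omits the one inequality that actually produces the coefficient $3$. The engine of the classical proof is: since $K_{S}=M+F$ with $F$ effective and $K_{S}$, $M$ nef, one has $K_{\tilde{S}}\cdot\tilde{M}\ge K_{S}\cdot M\ge M^{2}\ge\tilde{M}^{2}=:d$, so adjunction gives $2g(C)-2=\tilde{M}^{2}+K_{\tilde{S}}\cdot\tilde{M}\ge 2d$, i.e.\ the \emph{lower} bound $g(C)\ge d+1$. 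Since $C$ maps birationally onto a non-degenerate irreducible curve of degree $d$ in $\mathbb{P}^{p_{g}-2}$, Castelnuovo's genus bound gives the \emph{upper} bound $g(C)\le\pi(d,p_{g}-2)$; writing $d-1=m(p_{g}-3)+\epsilon$, the two bounds are incompatible for $m\le 2$, and for $m=3$ they force $\epsilon\ge 1$, i.e.\ $d\ge 3p_{g}-7$ (the case $m\ge 4$ being automatic for $p_{g}\ge 4$). Then $K_{S}^{2}\ge K_{S}\cdot M\ge M^{2}\ge d$ finishes. Your sketch never states $g(C)\ge d+1$, and you describe the genus bound as ``yielding a lower bound on $K_{S}\cdot M$,'' which has the logic reversed: Castelnuovo bounds $g(C)$, hence $K_{\tilde{S}}\cdot\tilde{M}+\tilde{M}^{2}$, from \emph{above}; only the squeeze against $g(C)\ge d+1$ pushes $d$ up. Without that input, Clifford alone delivers exactly Noether's $K_{S}^{2}\ge 2p_{g}-4$ and nothing more.

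The ``main obstacle'' you identify---the minimal-degree case $d=p_{g}-2$, to be rescued by pulling back rulings of scrolls or conics on the Veronese---is both a non-issue and not where the constants come from. If $\Sigma$ had minimal degree, its general hyperplane section would be a rational normal curve; but $C$ is birational to that section and $g(C)\ge 2$ (already from adjunction on a minimal surface of general type), so this case cannot occur and requires no analysis. Conversely, excluding it only improves $d\ge p_{g}-2$ to $d\ge p_{g}-1$, nowhere near $3p_{g}-7$. The proposed remedy is also unsubstantiated: the ruling pulls back to a pencil meeting $\tilde{M}$ in one point, and nothing in your argument forces it to meet $K_{S}$ in the ``missing $p_{g}-3$'' points. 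So the delicate point is not the Del Pezzo--Bertini borderline but the speciality estimate $g(C)\ge d+1$ fed into Castelnuovo's genus bound, and as written your proof does not reach the stated inequality.
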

We say an embedded variety $X\subset \mathbb{P}^{n}$ is full if $X$ is not contained in any hyperplane $\mathbb{P}^{n-1}\subset \mathbb{P}^{n}$.  
Let $S$ be a smooth surface obtained by taking a hyperplane section of a full Calabi--Yau threefold $X \subset \mathbb{P}^{6}$. 
Then deg$(X)=K_{S}^{2}$ and $K_{S}$ is nef since $S$ is a canonical surface. 
As $X \subset \mathbb{P}^{6}$ is full, the short exact sequence $0 \longrightarrow \mathscr{O}_{X} \longrightarrow \mathscr{O}_{X}(1) \longrightarrow \mathscr{O}_{S}(1) \longrightarrow 0 $ 
yields $p_{g}(S)=6$. 
We thereby conclude that the lower bound of the degree of $X$ is $11$. 
Since a complete intersection Calabi--Yau threefold $\mathbb{P}^{5}_{3^{2}}$ has degree 9, we cannot remove the fullness on $X$.\\

In his paper \cite{to}, F. Tonoli constructed pfaffian Calabi--Yau threefolds of degree $d$ in the range $11\le d \le 17$. 
Although the Castelnuovo inequality tells us that the minimal degree $d$ of a full Calabi--Yau threefold in $\mathbb{P}^{6}$ is 11, 
there seems no smooth degree 11 Calabi--Yau threefold in $\mathbb{P}^{6}$. 
Degree 12 pfaffian Calabi--Yau threefolds are complete intersections $\mathbb{P}^{6}_{2^{2},3}$. 
Therefore degree 13 is a good starting point to analyze. 

\begin{Def}[F. Tonoli \cite{to}]We define $X_{13} \subset \mathbb{P}^{6}$ as a pfaffian threefold associated to the locally free sheaf 
$\mathscr{E}=\mathscr{O}_{\mathbb{P}^{6}}(1) \oplus \mathscr{O}_{\mathbb{P}^{6}}^{\oplus 4}$. 
 \end{Def}
$X_{13}$ is indeed a smooth Calabi--Yau threefold and the geometric invariants are given by 
$$
h^{1,1}=1, \ h^{1,2}=61, \ \int_{X_{13}}H^{3}=13, \ \int_{X_{13}}c_{2}(X_{13})\cdot H = 58. 
$$

A degree 14 pfaffian Calabi--Yau threefold $X_{14}$ is defined as a pfaffian threefold associated to the locally free sheaf $\mathscr{E}=\mathscr{O}_{\mathbb{P}^{6}}^{\oplus7}$. 
This is nothing but the intersection of $\mathbb{P}^{6}$ with Pfaff$(7)\subset \mathbb{P}^{20}$, the rank 4 locus of projectivised general skew-symmetric $7 \times 7 $ matrices
$$
\mathbb{P}  (\bigwedge^{2} \mathbb{C}^{7}) = \mathbb{P}(\mathrm{SkewSym(7,\C)}) 
\supset \mathrm{Pfaff(7)}=\{[M] \ | \ \mathrm{rank}(M)\le4 \}. 
$$
It is verified that $X_{14}$ and its mirror partner $\check{X}_{14}$ have rich mathematical structures in \cite{rod, bc,hos}.

\subsection{Pfaffian Threefolds in Weighted Projective Spaces}
F. Tonoli's construction may be generalized by replacing the ambient space $\mathbb{P}^{6}$ by any Fano variety. 
Special care must be taken when the ambient space is singular. 
In the following we shall study the simplest case, when the ambient space is a weighted projective space $\mathbb{P}_{\mathbf{w}}$. 
Given an integer $t \in \mathbb{Z}$ and a locally free sheaf $\mathscr{E}$ of odd rank $2r+1$ on $\mathbb{P}_{\mathbf{w}}$, 
a global section $N \in H^{0}(\mathbb{P}_{\mathbf{w}}, \wedge^{2}\mathscr{E}(t))$ defines an alternating morphism
$\mathscr{E}^{\vee}(-t)\stackrel{N}{\rightarrow} \mathscr{E}$. 
The pfaffian complex associated to $(t,\mathscr{E},N)$ is given by 
\begin{equation}
0 \longrightarrow \mathscr{O}_{\mathbb{P}_{\mathbf{w}}}(-t-2s) \stackrel{P^{t}}{\longrightarrow}
\mathscr{E}^{\vee}(-t-s)\stackrel{N}{\longrightarrow}
\mathscr{E}(-s) \stackrel{P}{\longrightarrow} \mathscr{O}_{\mathbb{P}_{\mathbf{w}}}\notag,
\end{equation}
where $s=c_{1}(\mathscr{E})+rt$ and $P=\frac{1}{r!}\wedge^{r} N $ as before.  
The pfaffian variety $X \subset \mathbb{P}_{\mathbf{w}}$ associated to $(t,\mathscr{E},N)$ is a variety
whose structure sheaf $\mathscr{O}_{X}$ is given by $Coker(P)$. 
We define $|\mathbf{w}|$ as a sum of weights of $\mathbb{P}_{\mathbf{w}}$. 

\begin{Prop}
Let $\mathbb{P}_{\mathbf{w}}$ be a weighted projective space of dimension $6$ and $(t,\mathscr{E},N)$ as above. 
The pfaffian threefold $X$ associate to $(t,\mathscr{E},N)$ has trivial dualizing sheaf $\omega_{X} \cong \mathscr{O}_{X}$ if and only if $t+2s=|\mathbf{w}|$. 
\end{Prop}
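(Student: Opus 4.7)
The plan is to compute $\omega_X$ directly from the pfaffian resolution, generalizing the projective-space argument of Buchsbaum--Eisenbud by replacing $n+1$ with $|\mathbf{w}|$. The pfaffian complex gives a length-three locally free resolution
\begin{equation*}
0 \to \mathscr{O}_{\mathbb{P}_{\mathbf{w}}}(-t-2s) \to \mathscr{E}^{\vee}(-t-s) \to \mathscr{E}(-s) \to \mathscr{O}_{\mathbb{P}_{\mathbf{w}}} \to \mathscr{O}_X \to 0
\end{equation*}
of $\mathscr{O}_X$, so (in the generic situation of the Buchsbaum--Eisenbud theorem quoted above) $X$ is of the expected codimension $3$ and Cohen--Macaulay in $\mathbb{P}_{\mathbf{w}}$. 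Its dualizing sheaf is therefore
\begin{equation*}
\omega_X \;\cong\; \mathscr{E}xt^{3}_{\mathscr{O}_{\mathbb{P}_{\mathbf{w}}}}(\mathscr{O}_X,\, \omega_{\mathbb{P}_{\mathbf{w}}}).
\end{equation*}

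The key step is the self-duality of the pfaffian resolution. Applying $\mathscr{H}om(-,\mathscr{O}_{\mathbb{P}_{\mathbf{w}}})$ to the resolution and twisting by $\mathscr{O}(t+2s)$ returns the original complex (with $N$ replaced by $N^{t}$ and $P$ by $P^{t}$). Reading $\mathscr{E}xt^{3}(\mathscr{O}_X, \mathscr{O}_{\mathbb{P}_{\mathbf{w}}})$ off the dualized complex at its last term yields
\begin{equation*}
\mathscr{E}xt^{3}(\mathscr{O}_X, \mathscr{O}_{\mathbb{P}_{\mathbf{w}}}) \;\cong\; \mathscr{O}_X(t+2s).
\end{equation*}
Combined with the standard identification $\omega_{\mathbb{P}_{\mathbf{w}}} \cong \mathscr{O}_{\mathbb{P}_{\mathbf{w}}}(-|\mathbf{w}|)$, this gives $\omega_X \cong \mathscr{O}_X(t+2s-|\mathbf{w}|)$, and since $\mathscr{O}_X(1)$ is ample on the positive-dimensional $X$, triviality of this line bundle is equivalent to $t+2s=|\mathbf{w}|$.

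The main obstacle is handling the quotient singularities of $\mathbb{P}_{\mathbf{w}}$. One must justify that $\omega_{\mathbb{P}_{\mathbf{w}}} \cong \mathscr{O}_{\mathbb{P}_{\mathbf{w}}}(-|\mathbf{w}|)$ holds as a rank-one reflexive sheaf (this is the familiar formula $K_{\mathbb{P}_{\mathbf{w}}} = -|\mathbf{w}|H$, valid because $\mathbb{P}_{\mathbf{w}}$ is Cohen--Macaulay), and that both the local freeness of $\mathscr{E}$ and the exactness of the pfaffian complex survive along $X$ at singular points of $\mathbb{P}_{\mathbf{w}}$ that $X$ meets. The cleanest route is to run the entire Buchsbaum--Eisenbud argument on the smooth toric Deligne--Mumford stack associated to $\mathbb{P}_{\mathbf{w}}$, where $\mathscr{E}$ is genuinely locally free and Grothendieck duality is formal, and then push down to the coarse space. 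In the cases of interest in this paper $X$ can be arranged to avoid the (at worst isolated) singular locus of $\mathbb{P}_{\mathbf{w}}$, so one may alternatively compute $\omega_X$ on the smooth locus and extend the resulting isomorphism globally by reflexivity of both sides.
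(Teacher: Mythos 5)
Your proof follows essentially the same route as the paper: apply $\mathscr{H}om(-,\omega_{\mathbb{P}_{\mathbf{w}}})$ to the pfaffian resolution, use its self-duality to read off $\mathscr{E}xt^{3}(\mathscr{O}_{X},\omega_{\mathbb{P}_{\mathbf{w}}})\cong\mathscr{O}_{X}(t+2s-|\mathbf{w}|)$, and conclude by ampleness of $\mathscr{O}_{X}(1)$. Your added care about the quotient singularities of $\mathbb{P}_{\mathbf{w}}$ (working on the stack or on the smooth locus and extending by reflexivity) is a welcome refinement that the paper's one-line proof leaves implicit, but it does not change the argument.
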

\begin{proof} 
Apply the functor $\mathscr{H}om(-, \omega_{\mathbb{P}_{\mathbf{w}}})$ to the pfaffian resolution to compute 
the dualizing sheaf $\omega_{X} \cong \mathscr{E}xt^{3}(\mathscr{O}_{X},\omega_{\mathbb{P}_{\mathbf{w}}})$, 
which is isomorphic to $\cong \mathscr{O}_{X}$ if and only if $t+2s=|\mathbf{w}|$ by the definition of pfaffian variety. 
 \end{proof} 

As we are interested in Calabi--Yau threefolds, we restrict ourselves to the case $t+2s=|\mathbf{w}|$. 
Moreover, up to an opportune twist, we may assume $t=1$ for $|\mathbf{w}|$ odd and $t=0$ for $|\mathbf{w}|$ even. 

\begin{Def}
Define $X_{i}$ as a pfaffian threefold associated to the following locally sheaf $\mathscr{E}_{i}$ on $\mathbb{P}_{\mathbf{w}_{i}}$ for $i=5,7,10$. 
\begin{center}
 \begin{tabular}{|c|c|c|}  \hline
 $i$ & $\mathbf{w}_{i}$ & $\mathscr{E}_{i}$\\ \hline
 $5$ & $(1^{4},2^{3})$ & $\mathscr{O}_{\mathbb{P}_{\mathbf{w}_{5}}}^{\oplus 5}(1)$  \\ \hline
 $7$ & $(1^{5},2^{2})$ & $\mathscr{O}_{\mathbb{P}_{\mathbf{w}_{7}}}(1)^{\oplus 2}\oplus \mathscr{O}_{\mathbb{P}_{\mathbf{w}_{7}}}^{\oplus 3}$ \\ \hline
 $10$ & $(1^{6},2^{1})$ & $\mathscr{O}_{\mathbb{P}_{\mathbf{w}_{10}}}(1)^{\oplus 4}\oplus \mathscr{O}_{\mathbb{P}_{\mathbf{w}_{10}}}$\\ \hline
 \end{tabular}
 \end{center}
 \end{Def}

There are many other choices of weights $\mathbf{w}$ and locally sheaves $\mathscr{E}$ on $\mathbb{P}_{\mathbf{w}}$ to produce pfaffian Calabi--Yau threefolds 
but it seems only three cases above yield smooth Calabi--Yau threefolds in weighted projective spaces of dimension $6$. 

\begin{Thm}
For a generic choice of $N\in H^{0}(\mathbb{P}_{\mathbf{w}_{i}}, \wedge^{2}\mathscr{E}_{i}(t))$, the pfaffian varieties $X_{5}$, $X_{7}$ and $X_{10}$ are smooth varieties.
\end{Thm}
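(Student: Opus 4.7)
The plan is to follow F. Tonoli's strategy \cite{to} for the case of $\mathbb{P}^{6}$, suitably modified to handle the singularities of the ambient weighted projective space. We establish smoothness in two steps: first, that the pfaffian variety $X_{i}$ is smooth at every point lying in the smooth locus of $\mathbb{P}_{\mathbf{w}_{i}}$; second, that $X_{i}$ avoids the singular locus of $\mathbb{P}_{\mathbf{w}_{i}}$ entirely.

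For the first step, observe that on the smooth locus of $\mathbb{P}_{\mathbf{w}_{i}}$ the globalized Buchsbaum-Eisenbud theorem applies verbatim. For a generic $N$, the skew morphism has generic rank $2r=4$, and the pfaffian variety $X_{i}$ is cut out in the expected codimension $3$ by the $4 \times 4$ pfaffians of $N$, with the pfaffian complex as its self-dual resolution. The singular locus of $X_{i}$ on the smooth part of $\mathbb{P}_{\mathbf{w}_{i}}$ is then contained in the deeper degeneracy locus $\{\rk(N) \le 0\} = \{N = 0\}$. The vanishing of a $5\times 5$ skew-symmetric matrix imposes $\binom{5}{2}=10$ conditions, of expected codimension $10$ in $\mathbb{P}_{\mathbf{w}_{i}}$; since $\dim \mathbb{P}_{\mathbf{w}_{i}} = 6 < 10$, this locus is empty for generic $N$.

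For the second step, one analyzes each case separately. The singular locus of $\mathbb{P}_{(1^{4},2^{3})}$ is the $\mathbb{P}^{2}$ cut out by the four weight-one coordinates, that of $\mathbb{P}_{(1^{5},2^{2})}$ is a $\mathbb{P}^{1}$, and that of $\mathbb{P}_{(1^{6},2)}$ is a single point. On each such singular stratum, the restricted pfaffian generators $P_{1},\dots,P_{5}$ form a system of homogeneous polynomials of the appropriate weighted degree whose number exceeds the dimension of the stratum, so a dimension count — combined with the verification that the restriction map from the parameter space of $N$'s to these systems of polynomials is surjective onto a suitably large subspace — shows that for a generic $N$ the common vanishing on the singular stratum is empty.

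The main obstacle is the second step, since the restricted pfaffians are not arbitrary but inherit the skew-symmetric structure of $N$, and one must verify that they are generic enough among polynomial systems of the relevant type; a naive parameter count on the pfaffians themselves could fail because of syzygies. The cleanest and most practical approach, especially well-suited to these small-degree examples, is to exhibit a single explicit $N_{0}$ (with, say, random integer entries) for which both conditions above can be verified on affine charts via the Jacobian criterion, using computer algebra such as Macaulay2 or Singular. Since smoothness is an open condition on the parameter space $H^{0}(\mathbb{P}_{\mathbf{w}_{i}}, \wedge^{2}\mathscr{E}_{i}(t))$, a single smooth example guarantees smoothness for a generic $N$, reducing the theorem to a finite check in each of the three cases.
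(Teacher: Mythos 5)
Your first step is essentially the paper's own argument for smoothness away from $\mathrm{Sing}(\mathbb{P}_{\mathbf{w}_i})$: the paper packages it as an incidence variety $Y=f^{-1}(E_2)$ inside $\mathbb{P}_{\mathbf{w}_i}^{sm}\times H^{0}(\wedge^{2}\mathscr{E}_i(t))$ and applies generic smoothness of the projection to the parameter space, after first checking that $\wedge^{2}\mathscr{E}_i(t)$ is globally generated on the smooth locus. That global generation is exactly what licenses your assertion that the singular locus of $X_i$ is confined to the deeper stratum $\{N=0\}$ (empty by the codimension-$10$ count), so you should make it explicit rather than invoking Buchsbaum--Eisenbud ``verbatim.'' Where you genuinely diverge is the second step. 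The paper does \emph{not} resort to a dimension count on the restricted pfaffians or to a computer check of one example; it exploits the specific shape of each $\mathscr{E}_i$ on the singular stratum: for $X_5$ the restriction of $N$ to $\mathrm{Sing}(\mathbb{P}_{\mathbf{w}_5})\cong\mathbb{P}^2$ is a skew matrix of linear forms, so $X_5\cap\mathrm{Sing}$ is a linear $\mathbb{P}^2$-section of the codimension-$3$ locus $\mathrm{Pfaff}(5)$, hence empty; for $X_7$ the restriction to $\mathbb{P}^1$ has a forced $2\times 3$ block structure of rank $4$ for generic entries; for $X_{10}$ one simply checks $P_5$ does not vanish at the singular point. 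These hand arguments are what make the proof self-contained, and they sidestep precisely the syzygy worry you raise about treating the restricted pfaffians as a generic polynomial system. Your fallback---verify one explicit $N_0$ by the Jacobian criterion and conclude by openness of smoothness (and constancy of dimension) in the proper family over $H^{0}(\wedge^{2}\mathscr{E}_i(t))$---is logically sound and in the spirit of the paper's other Macaulay2 verifications, but as written it defers the essential content to an unexecuted computation in each of the three cases, whereas the paper's structural analysis buys a conceptual explanation of why each $X_i$ misses the singular stratum.
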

\begin{proof}
A generic choice of $N$ guarantees quasi-smoothness of $X_{i}$ as follows. 
For $X_{5}$ we have Sing$(\mathbb{P}_{\mathbf{w}_{5}})\cong \mathbb{P}^{2}$ and $X_{5} \cap \mathrm{Sing}(\mathbb{P}_{\mathbf{w}_{5}})$ 
is identified with the intersection of $\mathbb{P}^{2}$ with the rank 2 locus of projectivised general skew-symmetric 5 $\times$ 5 matrices Pfaff$(5)\cap\mathbb{P}^{2}$, which is empty.
For $X_{7}$ the matrix $N$ on Sing$(\mathbb{P}_{\mathbf{w}_{7}}) \cong \mathbb{P}^{1}$ has the following form
$$
N  = \begin{pmatrix}
                0           &  0    & g_{1}     & g_{2}    & g_{3} \\
                0            & 0           & g_{4}     & g_{5}   & g_{6}\\
                -g_{1}  & -g_{4} & 0              & 0   & 0 \\
                -g_{2}  & -g_{5} & 0   & 0           &   0\\
                -g_{3}  & -g_{6} & 0   &  0  & 0 \\
                \end{pmatrix} , 
$$
where $g_{1},\dots,g_{6}$ are linear polynomials of $x_{5}$ and $x_{6}$. 
It is obvious that this has rank greater than $2$ for a generic choice of $g_{1},\dots,g_{6}$. 
Finally, for general $X_{10}$ we have $P_{5}|_{\mathrm{Sing}(\mathbb{P}_{\mathbf{w}_{10}})}\ne0$ 
while $P_{i}|_{\mathrm{Sing}(\mathbb{P}_{\mathbf{w}_{10}})}=0$ for $1\le i \le 4$. 
This completes the proof of quasi-smoothness. 
We henceforth assume that $X_{i}$ avoids the singular locus Sing$(\mathbb{P}_{\mathbf{w}_{i}})$.\\

We denote by $\mathbb{P}_{\mathbf{w}_{i}}^{sm}$ the smooth open subset $\mathbb{P}_{\mathbf{w}_{i}}\setminus  \mathrm{Sing}(\mathbb{P}_{\mathbf{w}_{i}})$. 
Since $H^{0}(\mathbb{P}_{\mathbf{w}_{i}}^{sm}, \wedge^{2}\mathscr{E}_{i}(t))$ is generated by global sections, we have a surjection
$$
H^{0}(\mathbb{P}_{\mathbf{w}_{i}}^{sm}, \wedge^{2}\mathscr{E}_{i}(t)) \otimes_{\C}\StrO_{\mathbb{P}_{\mathbf{w}_{i}}^{sm}}
\longrightarrow \wedge^{2}\mathscr{E}_{i}(t). 
$$
This map induces a morphism $f$ of $\mathbb{P}_{\mathbf{w}_{i}}^{sm}$-schemes of full rank everywhere
\[\xymatrix{
\mathbb{P}_{\mathbf{w}_{i}}^{sm} \times H^{0}(\mathbb{P}_{\mathbf{w}_{i}}^{sm}, \wedge^{2}\mathscr{E}_{i}(t)) \ar[rr]^{f} \ar[rd]_{\pi_{1}} & &
 E=Spec (Sym(\wedge^{2}\mathscr{E}_{i}(t)))  \ar[dl]^{\pi_{2}} \\
                         & \mathbb{P}_{\mathbf{w}_{i}}^{sm} &   \\
}\]
sending $(x,N)\mapsto N(x)$. 
Let $p:\mathbb{P}_{\mathbf{w}_{i}}^{sm} \times H^{0}(\mathbb{P}_{\mathbf{w}_{i}}^{sm}, \wedge^{2}\mathscr{E}_{i}(t)) 
\rightarrow H^{0}(\mathbb{P}_{\mathbf{w}_{i}}^{sm}, \wedge^{2}\mathscr{E}_{i}(t))$ be the second projection. 
Define $E_{2}$ to be the codimension $3$ variety of $E$ whose fiber over a point $x \in \mathbb{P}_{\mathbf{w}_{i}}^{sm}$ is
$$
O_{2} \subset \mathrm{SkewSym}(5,\C) \cong \pi_{2}^{-1}(x)
$$
Note that $O_{2}$ is independent of the identification $\mathrm{SkewSym}(5,\C) \cong \pi_{2}^{-1}(x)$. 
Then $Y=f^{-1}(E_{2})$ is of codimension $3$ 
and singular along $f^{-1}(\mathrm{Sing}(E_{2}))=\mathbb{P}_{\mathbf{w}_{i}}^{sm} \times \{0\}$.  
$p|_{Y\setminus (\mathrm{Sing}(Y))}$ is dominant and generic smoothness of $p|_{Y\setminus (\mathrm{Sing}(Y))}$ proves that 
for a generic choice of $N \in H^{0}(\mathbb{P}_{\mathbf{w}_{i}}^{sm}, \wedge^{2}\mathscr{E}_{i}(t))$
$$
p|_{Y\setminus (\mathrm{Sing}(Y))}^{-1}(N)=\{(x,N)\ | \ \rk(N(x))=2\}
$$
is smooth and of dimension $3$. 
The quasi-smoothness of $X_{i}$ then shows 
$$
\mathbb{P}_{\mathbf{w}_{i}}^{sm} 
\supset X_{i}=\pi_{2}(p|_{Y\setminus (\mathrm{Sing}(Y))}^{-1}(N)) \cong p|_{Y\setminus (\mathrm{Sing}(Y))}^{-1}(N).
$$ 
We have thus proved the theorem. 
\end{proof}

For each $X_{i}$, vanishing of $H^{j}(X_{i}, \mathscr{O}_{X_{i}})=0$ for $j=1,2$ readily follows from the pfaffian resolution. 
Therefore $X_{5}, X_{7}$ and $X_{10}$ are smooth Calabi--Yau threefolds. 
In the following we assign to each $X_{i}$ a polarization $H$ coming from the hyperplane class of the ambient space $\mathbb{P}_{\mathbf{w}_{i}}$. 

\begin{Lem} \label{Hilb}
The Hilbert series $H_{X_{i}}(t)$ of the pfaffian Calabi--Yau threefold $X_{i}$ is given by the following. 
\begin{align}
H_{X_{5}}(t)=&\frac{1+3t^{2}+t^{4}}{(1-t)^{4}} \ \ \ \ \ 
H_{X_{7}}(t)=\frac{1+t+3t^{2}+t^{3}+t^{4}}{(1-t)^{4}} \notag \\
& H_{X_{10}}(t)=\frac{1+2t+4t^{2}+2t^{3}+t^{4}}{(1-t)^{4}} \notag
\end{align}
\end{Lem}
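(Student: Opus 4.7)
The plan is to view the pfaffian complex of the previous subsection as the sheafification of a graded minimal free resolution of the homogeneous coordinate ring $S_{X_i}=S/I_{X_i}$ over the weighted polynomial ring $S=\C[x_0,\dots,x_6]$ (with $\deg x_j = (\mathbf{w}_i)_j$), and then apply additivity of Hilbert series along the resolution.

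First I would record the parameters. With $r=2$, the condition $t+2s=|\mathbf{w}_i|$ together with the normalization $t\in\{0,1\}$ gives $(t,s)=(0,5)$ for $X_5$, $(1,4)$ for $X_7$, and $(0,4)$ for $X_{10}$. Inserting the splittings of $\mathscr{E}_i$ given in the definition, the middle two terms $\mathscr{E}_i(-s)$ and $\mathscr{E}_i^\vee(-t-s)$ decompose into direct sums of line bundles on $\mathbb{P}_{\mathbf{w}_i}$: namely $\mathscr{O}(-4)^{\oplus 5}$ and $\mathscr{O}(-6)^{\oplus 5}$ for $X_5$; $\mathscr{O}(-3)^{\oplus 2}\oplus\mathscr{O}(-4)^{\oplus 3}$ and $\mathscr{O}(-5)^{\oplus 3}\oplus\mathscr{O}(-6)^{\oplus 2}$ for $X_7$; and $\mathscr{O}(-3)^{\oplus 4}\oplus\mathscr{O}(-4)$ and $\mathscr{O}(-4)\oplus\mathscr{O}(-5)^{\oplus 4}$ for $X_{10}$. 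Since $P_S(t)=\prod_j(1-t^{(\mathbf{w}_i)_j})^{-1}$ and each twist $\mathscr{O}(-a)$ contributes a factor $t^a$, additivity along the resolution yields
\begin{equation*}
H_{X_i}(t)=\frac{1-\sum_j t^{a_j}+\sum_j t^{b_j}-t^{t+2s}}{\prod_j(1-t^{(\mathbf{w}_i)_j})}.
\end{equation*}

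The remaining step is polynomial simplification. In each case the denominator contains a factor $(1-t^2)^{\ell}$ with $\ell=3,2,1$ for $i=5,7,10$ respectively, and direct expansion shows that $(1-t^2)^{\ell}$ divides the numerator, with quotient the claimed polynomial in the lemma. For instance the $X_5$ numerator computes to $1-5t^4+5t^6-t^{10}=(1+3t^2+t^4)(1-t^2)^3$; the analogous identities for $X_7$ and $X_{10}$ are verified by an equally short expansion.

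The one real obstacle is justifying that taking graded global sections of the twisted pfaffian sheaf complex yields an exact sequence of graded $S$-modules ending in $S_{X_i}$. This rests on $X_i\subset\mathbb{P}_{\mathbf{w}_i}^{sm}$ (established in the smoothness theorem), on the identification $S=\bigoplus_n H^0(\mathbb{P}_{\mathbf{w}_i},\mathscr{O}(n))$, and on the vanishing $H^j(\mathbb{P}_{\mathbf{w}_i},\mathscr{O}(k))=0$ for $0<j<6$ and all $k$, which is standard for weighted projective spaces. Once this is in place the rest is routine bookkeeping of the shifts tabulated above.
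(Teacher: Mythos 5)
Your proposal is correct and follows essentially the same route as the paper: read off the twists in the pfaffian resolution, use additivity of the Hilbert series together with the formula $H_{\mathbb{P}_{\mathbf{w}}}(\mathscr{O}(k))(t)$ for weighted projective space, and simplify. Your explicit tabulation of the shifts and the factorizations such as $1-5t^{4}+5t^{6}-t^{10}=(1+3t^{2}+t^{4})(1-t^{2})^{3}$ check out; the paper simply leaves this bookkeeping implicit.
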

\begin{proof}
As we already have a resolution of the structure sheaf of $X_{i}$, the claim easily follows from the additivity of Hilbert series and 
the formula $$H_{\mathbb{P}_{\mathbf{w}_{i}}}(\mathscr{O}_{\mathbb{P}_{\mathbf{w}_{i}}}(k))(t)=\frac{t^{k}}{\prod_{i=0}^{n}(1-t^{w_{i}}_{i})}.$$ 
\end{proof}
\begin{Prop}
The degree $\int_{X_{i}}H^{3}$ of the pfaffian Calabi--Yau threefold $X_{i}$ is $i$. 
\end{Prop}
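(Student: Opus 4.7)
The plan is to extract $\int_{X_{i}}H^{3}$ directly from the Hilbert series computed in the previous lemma, using the standard fact that for a three-dimensional polarized projective Cohen--Macaulay variety $(X,H)$ whose Hilbert series takes the form $P(t)/(1-t)^{4}$, the degree equals $P(1)$.

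First, I would observe that the pfaffian resolution expresses the graded pieces of the coordinate ring of $X_{i}$ as an alternating sum of graded pieces of line bundles on $\mathbb{P}_{\mathbf{w}_{i}}$. Because the higher cohomology of $\mathscr{O}_{\mathbb{P}_{\mathbf{w}}}(j)$ vanishes in the intermediate degrees for every $j$, taking Euler characteristics term by term shows that the formally computed $H_{X_{i}}(t)$ coincides, in sufficiently large degree, with the Hilbert polynomial $\chi(X_{i},\mathscr{O}_{X_{i}}(kH))$ of $(X_{i},H)$. In other words, for $k\gg 0$ the coefficient $h(k)$ of $t^{k}$ in $H_{X_{i}}(t)$ equals $\chi(X_{i},\mathscr{O}_{X_{i}}(kH))$.

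Next, asymptotic Hirzebruch--Riemann--Roch on a Calabi--Yau threefold gives
$$
\chi(X_{i},\mathscr{O}_{X_{i}}(kH)) \;=\; \frac{\int_{X_{i}} H^{3}}{6}\, k^{3} \;+\; (\text{lower order in }k).
$$
On the other hand, writing $P_{i}(t)=\sum_{j}p_{i,j} t^{j}$ and using the expansion $1/(1-t)^{4}=\sum_{k}\binom{k+3}{3}t^{k}$ yields, for $k\gg 0$,
$$
h(k)\;=\;\sum_{j}p_{i,j}\binom{k-j+3}{3}\;=\;\frac{P_{i}(1)}{6}\,k^{3}\;+\;(\text{lower order in }k).
$$
Comparing leading coefficients gives $\int_{X_{i}} H^{3}=P_{i}(1)$. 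Plugging in from the previous lemma yields $P_{5}(1)=1+3+1=5$, $P_{7}(1)=1+1+3+1+1=7$, and $P_{10}(1)=1+2+4+2+1=10$, which is the claim.

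The only real subtlety is the first step, namely verifying that the formally computed Hilbert series genuinely is the Hilbert polynomial asymptotically in this weighted projective setting; once one grants the cohomology vanishing on $\mathbb{P}_{\mathbf{w}_{i}}$ that justifies term-by-term passage to Euler characteristics in the pfaffian resolution, the rest is a routine matching of leading coefficients.
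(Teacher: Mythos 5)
Your proposal is correct and follows essentially the same route as the paper: the paper likewise reads off $\int_{X_{i}}H^{3}$ as $3!$ times the leading coefficient of the Hilbert polynomial obtained from the Hilbert series of the previous lemma, which is exactly your $P_{i}(1)$. You merely make explicit the standard justification (term-by-term Euler characteristics in the pfaffian resolution and the expansion of $1/(1-t)^{4}$) that the paper leaves implicit.
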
 
\begin{proof}
Let $d$ be $3!$ times the leading coefficient of the Hilbert polynomial $P_{X_{i}}(t)$, which is readily available thanks to Lemma \ref{Hilb}.   
Since a pfaffian variety is locally a complete intersection, the triple intersection $\int_{X_{i}}H^{3}$ coincides with $d$. 
\end{proof}

\begin{Prop}$\int_{X_{i}}c_{2}(X_{i})\cdot H$ is given below for $i=5,7,10$.  
\begin{center}
 \begin{tabular}{|c|c|c|c|c|c|c|}  \hline
 $X_{i}$  &  $X_{5}$    &  $X_{7}$ &  $X_{10}$\\ \hline
$\int_{X_{i}}c_{2}(X_{i})\cdot H$      &  $38$  &  $46$ &  $52$  \\ \hline
 \end{tabular}
 \end{center}
 \end{Prop}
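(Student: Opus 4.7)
The strategy is a direct application of Hirzebruch--Riemann--Roch. Since $X_{i}$ is a smooth Calabi--Yau threefold we have $c_{1}(X_{i})=0$, so $\mathrm{td}(X_{i})=1+\tfrac{1}{12}c_{2}(X_{i})$ in the relevant degrees, and combining with $\mathrm{ch}(\mathscr{O}_{X_{i}}(nH))=e^{nH}$ yields the standard formula
\begin{equation*}
\chi(X_{i},\mathscr{O}_{X_{i}}(nH)) \;=\; \frac{n^{3}}{6}\int_{X_{i}}H^{3} \;+\; \frac{n}{12}\int_{X_{i}}c_{2}(X_{i})\cdot H,
\end{equation*}
with vanishing $n^{2}$ and constant terms (consistent with $\chi(\mathscr{O}_{X_{i}})=0$ on a Calabi--Yau threefold).

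The plan is to match this polynomial with the Hilbert polynomial $P_{X_{i}}(n)$ extracted from the series $H_{X_{i}}(t)$ of the previous lemma. Because the pfaffian complex is an exact locally free resolution of $\mathscr{O}_{X_{i}}$ by direct sums of twists of $\mathscr{O}_{\mathbb{P}_{\mathbf{w}_{i}}}$, additivity of Euler characteristics shows that the coefficient of $t^{n}$ in $H_{X_{i}}(t)$ equals $\chi(X_{i},\mathscr{O}_{X_{i}}(nH))$ for $n \ge 1$; here one uses that the intermediate cohomology of the line bundle twists appearing in the resolution on $\mathbb{P}_{\mathbf{w}_{i}}$ vanishes, or equivalently Kodaira vanishing together with Serre duality on $X_{i}$.

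Since $\int_{X_{i}}H^{3}=i$ has already been established in the preceding proposition, a single evaluation of $P_{X_{i}}$ is sufficient to pin down the second invariant. Expanding $(1-t)^{-4}=\sum_{n\ge 0}\binom{n+3}{3}t^{n}$, the coefficient of $t$ in $H_{X_{i}}(t)$ is $4$, $5$, $6$ for $i=5,7,10$, respectively, and substituting into
\begin{equation*}
P_{X_{i}}(1) \;=\; \frac{i}{6}+\frac{1}{12}\int_{X_{i}}c_{2}(X_{i})\cdot H
\end{equation*}
gives $\int_{X_{i}}c_{2}(X_{i})\cdot H = 12\,P_{X_{i}}(1) - 2i = 38, 46, 52$, which matches the table. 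The only delicate point is the identification of Hilbert series coefficients with $\chi(X_{i},\mathscr{O}_{X_{i}}(nH))$; this however follows routinely from the pfaffian resolution, so there is no substantive obstacle beyond this bookkeeping.
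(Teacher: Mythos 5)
Your argument is correct and is essentially the paper's own proof: both apply Hirzebruch--Riemann--Roch to $\mathscr{O}_{X_i}(H)$ and then evaluate $\chi(X_i,\mathscr{O}_{X_i}(H))$ from the pfaffian resolution, the only cosmetic difference being that you read off this Euler characteristic as the coefficient of $t$ in the Hilbert series $H_{X_i}(t)$, whereas the paper identifies it via Kodaira vanishing with $\dim H^{0}(\mathbb{P}_{\mathbf{w}_{i}},\mathscr{O}_{\mathbb{P}_{\mathbf{w}_{i}}}(H))$ --- the same numbers $4,5,6$.
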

\begin{proof}
Since we know that $X_{i}$ is a smooth Calabi--Yau threefold, the Hirzebruch--Riemann--Roch Theorem gives
$$
 \chi(X_{i},\mathscr{O}_{X_{i}}(H))
 =\frac{1}{6}\deg(X_{i})+\frac{1}{12}\int_{X_{i}}c_{2}(X_{i})\cdot H.
$$
 By the Kodaira vanishing theorem, $H^{j}(X_{i},\mathscr{O}_{X_{i}}(H))=0$ except for $j=0$ and hence we have 
$$
 \chi(X_{i},\mathscr{O}_{X_{i}}(H))=\dim H^{0}(X_{i},\mathscr{O}_{X_{i}}(H))=\dim H^{0}(\mathbb{P}_{\mathbf{w}_{i}},\mathscr{O}_{\mathbb{P}_{\mathbf{w}_{i}}}(H))
$$
This determines $\int_{X_{i}}c_{2}(X_{i})\cdot H$.
 \end{proof}

We will also need resolutions of the powers of pfaffian ideals, which are studied, for example, in \cite{boffi}. 
Let $R$ be a commutative ring. 
We consider a free $R$-module $E$ of rank $2r+1$ and a generic alternating map $N : E^{\vee} \rightarrow E$, 
then we have the pfaffian resolution 
$$
0 \longrightarrow 
R \stackrel{P^{t}}{\longrightarrow} E^{\vee}
\stackrel{N}{\longrightarrow} E
\stackrel{P}{\longrightarrow} R
\longrightarrow R/I \longrightarrow 0.
$$
Let $L_{\lambda}E$ be the representation of $GL(E)$ corresponding to a hook Young tableau $\lambda$ (we refer the reader to \cite{boffi} for the precise definition of $L_{\lambda}E$).  
\begin{Lem}[\cite{boffi}]\label{Lem:0} There exists a resolution of $I^{2}$ of the form 
$$
0 \longrightarrow  L_{(2r-1)}E \cong \wedge^{2r-1}E
\stackrel{\vartheta_{3}}{\longrightarrow} L_{(2r,1)}E
\stackrel{\vartheta_{2}}{\longrightarrow} L_{(2r+1,1^{2})}E \cong S^{2} E
\stackrel{\vartheta_{1}}{\longrightarrow} I^{2}
\longrightarrow 0,
$$
where $\vartheta_{1}$ is the second symmetric power of $P$ and $\vartheta_{3}$ and $\vartheta_{2}$ are induced by the map
\begin{equation}
\wedge^{a}E \otimes_{R} S^{b}E \rightarrow \wedge^{a+1}E \otimes_{R} S^{b+1}E, \ \
u\otimes v \mapsto \sum_{i,j+1}^{2r+1}n_{i,j} e_{i}\wedge u \otimes v e_{j}, \notag
\end{equation}
where $N=\sum_{i,j=1}^{2r+1}n_{i,j} e_{i}\otimes e_{j}$ with respect to some fixed basis $e_{1},\dots e_{2r+1}$ for $E$.
\end{Lem}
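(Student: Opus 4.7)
The plan is to follow the general machinery of Akin--Buchsbaum--Weyman for resolutions of powers of ideals via Schur--Weyl functors, specialized to the pfaffian case as in Boffi's paper. The strategy proceeds in three stages: construct the maps, check that what we have is a complex, and then verify acyclicity via the Buchsbaum--Eisenbud criterion.

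First I would verify that $\vartheta_{1}$ surjects onto $I^{2}$. Since $I$ is generated by the pfaffians $P_{1},\dots,P_{2r+1}$, the ideal $I^{2}$ is generated by the products $P_{i}P_{j}$, and these are precisely the image of the symmetric square $S_{2}(P):S_{2}E\to R$; thus $\vartheta_{1}=S_{2}(P)$ under the identification $L_{(2r+1,1^{2})}E \cong S_{2}E$. For $\vartheta_{2}$ and $\vartheta_{3}$ I would use the Koszul-type contraction with $N$ given in the statement. A non-trivial preliminary point is that this tensorial formula must descend to well-defined maps between the Schur modules $L_{(2r-1)}E$, $L_{(2r,1)}E$ and $S_{2}E$, i.e., that it respects the Pl\"ucker/straightening relations cutting these modules out of the ambient tensor products. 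Granted this, $\vartheta_{1}\circ\vartheta_{2}=0$ follows by applying the pfaffian identity $P\cdot N=0$ symmetrically, and $\vartheta_{2}\circ\vartheta_{3}=0$ follows from the antisymmetry of $N$ combined with a direct tensor bookkeeping.

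Finally, to upgrade the complex to a resolution, I would invoke the Buchsbaum--Eisenbud acyclicity criterion: verify that the ranks of consecutive maps sum to the ranks of the terms, and that the ideal of maximal minors of each $\vartheta_{i}$ has grade $\ge i$. This is cleanest to carry out first in the universal case, where $R$ is the polynomial ring in the entries of $N$; there the pfaffian ideal $I$ has grade exactly $3$, a fact already invoked in this section, and the rank equalities follow from a Weyl-character computation (equivalently, a standard-tableau count for the relevant Schur shapes). The main obstacle is the grade estimate for the middle map $\vartheta_{2}$: its minors are not simply expressible as pfaffians but lie in a naturally Schur-indexed subideal of $R$, and bounding their grade is the technical heart of the argument. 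This is exactly what is established in \cite{boffi}, and in practice I would invoke that result rather than reproduce the tableau combinatorics. Passage from the universal case to arbitrary $R$ is then by flat base change.
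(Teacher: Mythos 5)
The paper offers no proof of this lemma at all---it is stated as a citation to \cite{boffi}---and your proposal, after a sensible standard outline (maps, complex property via $P\cdot N=0$ and antisymmetry of $N$, acyclicity via the Buchsbaum--Eisenbud criterion in the universal case), likewise defers the technical heart (the grade estimate for $\vartheta_{2}$) to \cite{boffi}, so you are consistent with the paper. One small correction: the passage from the universal case to a general $R$ is not by flat base change (the specialization $\mathbb{Z}[n_{i,j}]\to R$ is generally not flat) but by the generic perfection/specialization principle of Eagon--Northcott--Hochster, i.e.\ acyclicity persists because the relevant grade conditions are preserved for a generic $N$, which is the setting of the paper.
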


\begin{Lem}\label{Lem:1}
There exist resolutions of the ideal sheaves 
$\mathscr{I}_{X_{5}}^{2}$, $\mathscr{I}_{X_{7}}^{2}$ and $\mathscr{I}_{X_{10}}^{2}$ 
of the following form. 
 \begin{align}
 0 \longrightarrow 
 \mathscr{O}_{\mathbb{P}_{\mathbf{w}_{5}}}(-12) ^{\oplus 10}
 \stackrel{\vartheta_{3}}{\longrightarrow}  \mathscr{O}_{\mathbb{P}_{\mathbf{w}_{5}}}(-10)^{\oplus 24}
\stackrel{\vartheta_{2}}{\longrightarrow} \mathscr{O}_{\mathbb{P}_{\mathbf{w}_{5}}}(-8)^{\oplus15}
\stackrel{\vartheta_{1}}{\longrightarrow} \mathscr{I}_{X_{5}}^{2} \longrightarrow 0 \notag
 \end{align}
 \begin{align}
 0 \longrightarrow 
 \mathscr{O}_{\mathbb{P}_{\mathbf{w}_{7}}}&(-12) \oplus \mathscr{O}_{\mathbb{P}_{\mathbf{w}_{7}}}(-11)^{\oplus 6}
 \oplus \mathscr{O}_{\mathbb{P}_{\mathbf{w}_{7}}}(-10)^{\oplus 3}\notag \\
 \stackrel{\vartheta_{3}}{\longrightarrow} &
 \mathscr{O}_{\mathbb{P}_{\mathbf{w}_{7}}}(-10)^{\oplus 6} \oplus \mathscr{O}_{\mathbb{P}_{\mathbf{w}_{7}}}(-9)^{\oplus 12} 
 \oplus \mathscr{O}_{\mathbb{P}_{\mathbf{w}_{7}}}(-8)^{\oplus 6} \notag \notag \\
&\stackrel{\vartheta_{2}}{\longrightarrow}
\mathscr{O}_{\mathbb{P}_{\mathbf{w}_{7}}}(-8)^{\oplus 6} \oplus \mathscr{O}_{\mathbb{P}_{\mathbf{w}_{7}}}(-7)^{\oplus 6} 
\oplus \mathscr{O}_{\mathbb{P}_{\mathbf{w}_{7}}}(-6)^{\oplus 3}
\stackrel{\vartheta_{1}}{\longrightarrow} \mathscr{I}_{X_{7}}^{2} \longrightarrow 0 \notag
 \end{align}
 \begin{align}
 0 \longrightarrow 
 \mathscr{O}_{\mathbb{P}_{\mathbf{w}_{10}}}(-10)^{\oplus6}\oplus & \mathscr{O}_{\mathbb{P}_{\mathbf{w}_{10}}}(-9)^{\oplus 4} \notag \\
 \stackrel{\vartheta_{3}}{\longrightarrow}
 \mathscr{O}_{\mathbb{P}_{\mathbf{w}_{10}}}& (-9)^{\oplus 4}\oplus  \mathscr{O}_{\mathbb{P}_{\mathbf{w}_{10}}}(-8)^{\oplus 16} \oplus \mathscr{O}_{\mathbb{P}_{\mathbf{w}_{10}}}(-7)^{\oplus 4} \notag \\ 
\stackrel{\vartheta_{2}}{\longrightarrow} & 
\mathscr{O}_{\mathbb{P}_{\mathbf{w}_{10}}}(-8) \oplus \mathscr{O}_{\mathbb{P}_{\mathbf{w}_{10}}}(-7)^{\oplus 4} 
\oplus \mathscr{O}_{\mathbb{P}_{\mathbf{w}_{10}}}(-6)^{\oplus 10} \notag \\
 & \stackrel{\vartheta_{1}}{\longrightarrow}\mathscr{I}_{X_{10}}^{2} \longrightarrow 0 \notag
 \end{align}
Here each term from left to right is regarded as $5 \times 5$ skew-symmetric, general but top left being zero, and symmetric matrices 
and the morphisms are given by $\vartheta_{3}(X)=NX-(NX)_{1,1}I$, $\vartheta_{2}(X)=XN+(XN)^{t}$, $\vartheta_{1}(X)=P^{t}XP$. 
\end{Lem}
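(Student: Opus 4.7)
The plan is to sheafify Boffi's local resolution (Lemma \ref{Lem:0}) with $E = \mathscr{E}_i(-s)$, so that the pfaffian map $P : \mathscr{E}_i(-s) \to \mathscr{O}_{\mathbb{P}_{\mathbf{w}_i}}$ plays the role of Boffi's surjection $E \to I$, and then decompose each term of the resulting complex into line bundles. The integer $s$ is fixed by the Calabi-Yau condition $t+2s=|\mathbf{w}_i|$: namely $s=5$ with $t=0$ for $X_{5}$, $s=4$ with $t=1$ for $X_{7}$, and $s=4$ with $t=0$ for $X_{10}$. Exactness of the resulting complex will be inherited from the local result, working on the smooth locus of $\mathbb{P}_{\mathbf{w}_i}$, which contains $X_i$ by the smoothness theorem proved above.

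Next I identify each term. The rightmost term $S_{2}E \cong S_{2}(\mathscr{E}_i)\otimes \mathscr{O}(-2s)$ decomposes via $S_{2}(A \oplus B) = S_{2}A \oplus (A \otimes B) \oplus S_{2}B$, while the leftmost term $\Lambda^{3}E \cong \Lambda^{3}(\mathscr{E}_i)\otimes \mathscr{O}(-3s)$ decomposes via $\Lambda^{3}(A \oplus B) = \bigoplus_{p+q=3} \Lambda^{p}A \otimes \Lambda^{q}B$. Applied to the three split bundles $\mathscr{E}_{5} = \mathscr{O}(1)^{\oplus 5}$, $\mathscr{E}_{7} = \mathscr{O}(1)^{\oplus 2}\oplus \mathscr{O}^{\oplus 3}$, $\mathscr{E}_{10} = \mathscr{O}(1)^{\oplus 4}\oplus \mathscr{O}$, these reproduce the outer terms of the three displayed resolutions verbatim. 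The middle term, presented concretely as $5\times 5$ matrices with $(1,1)$-entry zero, is identified with the direct summand $\bigoplus_{(k,l)\ne(1,1)} \mathscr{O}(w_k-w_l)$ of $\mathscr{E}_i \otimes \mathscr{E}_i^{\vee}$; after twisting by $-t-2s$ (so that the $(k,l)$-block sits in $\mathscr{O}(w_k-w_l-t-2s)$) and placing the distinguished first basis vector in a weight-$1$ slot (so that $E_{11}$ is the trivial line), this reproduces the stated middle term in each case.

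Finally I verify the maps. The identity $\vartheta_{1}\vartheta_{2}=0$ follows from the pfaffian relation $NP=0$ and its transpose $P^{t}N^{t}=0$, since $P^{t}(XN+N^{t}X^{t})P = P^{t}X(NP) + (P^{t}N^{t})X^{t}P = 0$. For $\vartheta_{2}\vartheta_{3}$, the skew-symmetry of both $N$ and $X$ yields $NXN + N^{t}X^{t}N^{t} = NXN - NXN = 0$, while the diagonal correction $(NX)_{1,1}I$ drops out of the symmetrization because $N+N^{t}=0$. The main obstacle is the bookkeeping on the middle term: the module $L_{(2r,1)}E$ does not fit cleanly under a single ``Schur functor of a twisted bundle'' formalism in our graded setting (its natural Schur weight gives a twist of $-(2r+1)s$, whereas the correct twist is $-t-2s$), so the cleanest path is to trust the concrete matrix description given in the lemma and track the line-bundle summands of $\mathscr{E}_i \otimes \mathscr{E}_i^{\vee}$ case by case for the three mixed-weight bundles.
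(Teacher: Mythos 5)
Your proposal takes essentially the same route as the paper: both sheafify the resolution of Lemma \ref{Lem:0} with $E=\mathscr{E}_{i}(-s)$ and read off the line-bundle summands from the identifications of $\Lambda^{3}E$, $L_{(4,1)}E$ and $S_{2}E$ with the three matrix spaces; the paper's proof is a one-sentence version of exactly this. Your explicit degree bookkeeping (in particular the twist $-t-2s$ on $\mathscr{E}_{i}\otimes\mathscr{E}_{i}^{\vee}$ minus the $(1,1)$ entry for the middle term) and the verifications that $\vartheta_{1}\vartheta_{2}=0$ and $\vartheta_{2}\vartheta_{3}=0$ are correct and merely supply details the paper omits.
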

\begin{proof}
Let $F$ be a free $R$-module of rank $5$. 
We may suitably identify $\wedge^{3}F$ with $5 \times 5$ skew-symmetric matrices, $L_{(4,1)}F$ with general but top left being zero matrices, and $S^{2} F$ with symmetric matrices. 
By Lemma \ref{Lem:0} it is straightforward to see that the morphisms $\vartheta_{i}$ are of the forms described in the claim.   
\end{proof}

\begin{Thm}\label{Hodge}
The Hodge numbers $h^{1,1}$ and $h^{1,2}$ of the pfaffian Calabi--Yau threefold $X_{i}$ are given by the following table.
\begin{center}
 \begin{tabular}{|c|c|c|c|c|c|c|}  \hline
 $X_{i}$  &  $X_{5}$    &  $X_{7}$  &  $X_{10}$\\ \hline
$h^{1,1}$      &  $1$  &   $1$ &  $1$  \\ \hline
$h^{1,2}$      &  $51$  &   $61$  &  $59$ \\ \hline
 \end{tabular}
 \end{center}
\end{Thm}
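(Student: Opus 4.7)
The plan is to compute $h^{1,2}(X_i)=h^1(X_i,T_{X_i})=h^2(X_i,\Omega^1_{X_i})$ directly from the pfaffian data, and to obtain $h^{1,1}(X_i)=1$ from a Lefschetz-type argument supplemented with the Euler characteristic relation $\chi(X_i)=2(h^{1,1}(X_i)-h^{1,2}(X_i))$. The main tool is the conormal sequence
\[
0 \longrightarrow \mathscr{I}_{X_i}/\mathscr{I}_{X_i}^2 \longrightarrow \Omega^1_{\mathbb{P}_{\mathbf{w}_i}}|_{X_i} \longrightarrow \Omega^1_{X_i} \longrightarrow 0,
\]
combined with the restricted Euler sequence $0\to\Omega^1_{\mathbb{P}_{\mathbf{w}_i}}|_{X_i}\to\bigoplus_{j}\StrO_{X_i}(-w_j)\to\StrO_{X_i}\to 0$ and the two resolutions available to us by ambient line bundles on $\mathbb{P}_{\mathbf{w}_i}$: the pfaffian resolution of $\StrO_{X_i}$ (hence of $\mathscr{I}_{X_i}$) and the resolution of $\mathscr{I}_{X_i}^2$ from Lemma \ref{Lem:1}.

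First, I would compute the sheaf cohomology groups $H^\bullet(X_i,\StrO_{X_i}(k))$ for the finitely many twists that occur, by tensoring the pfaffian resolution with $\StrO_{\mathbb{P}_{\mathbf{w}_i}}(k)$ and chasing the hypercohomology spectral sequence; here the ingredient is the well-known formula for $H^\bullet$ of line bundles on weighted projective space. From these and the restricted Euler sequence I obtain $H^\bullet(X_i,\Omega^1_{\mathbb{P}_{\mathbf{w}_i}}|_{X_i})$. Next, the exact sequence
\[
0 \longrightarrow \mathscr{I}_{X_i}^2 \longrightarrow \mathscr{I}_{X_i} \longrightarrow \mathscr{I}_{X_i}/\mathscr{I}_{X_i}^2 \longrightarrow 0
\]
lets me read off the hypercohomology of the conormal sheaf, since both outer terms have explicit line-bundle resolutions (the truncated pfaffian complex and the Boffi-type complex of Lemma \ref{Lem:1}). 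Feeding these into the long exact sequence associated to the conormal sequence for $X_i$ produces $h^1(X_i,\Omega^1_{X_i})$ and $h^2(X_i,\Omega^1_{X_i})$.

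For $h^{1,1}(X_i)=1$, the ample generator $H$ of $\mathrm{Pic}(\mathbb{P}_{\mathbf{w}_i})$ restricts nontrivially to $X_i$, giving the lower bound. For the upper bound I would invoke the Lefschetz-type theorem for subcanonical subvarieties of codimension three in a weighted projective space avoiding the singular locus, using that the pfaffian resolution identifies $X_i$ with the degeneracy locus of a skew-symmetric map of vector bundles; alternatively one can verify $h^{1,1}(X_i)=1$ directly from the above cohomology computation of $\Omega^1_{X_i}$. Once $h^{1,2}$ is known, the Calabi-Yau relation $\chi=2(h^{1,1}-h^{1,2})$ provides a useful consistency check given the computed $\int_{X_i}c_2(X_i)\cdot H$.

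The main obstacle is the careful bookkeeping in step two: the resolution of $\mathscr{I}_{X_i}^2$ has up to three nonzero terms of mixed twists (visible in the asymmetric shape of the resolutions displayed in Lemma \ref{Lem:1}), so extracting the hypercohomology of $\mathscr{I}_{X_i}/\mathscr{I}_{X_i}^2$ requires tracking the two spectral sequences and verifying that the differentials behave as expected. A secondary subtlety is ensuring that $X_i$ avoids $\mathrm{Sing}(\mathbb{P}_{\mathbf{w}_i})$ so that the Euler sequence for the ambient (singular) space pulls back correctly to the smooth locus containing $X_i$; this was arranged in the proof of the smoothness theorem, so the restricted Euler sequence is exact and the above cohomological machinery applies verbatim.
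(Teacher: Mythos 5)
Your proposal follows essentially the same route as the paper: twist the pfaffian resolution to get $H^\bullet(X_i,\StrO_{X_i}(k))$, use the restricted Euler sequence for $H^\bullet(\varOmega^1_{\mathbb{P}_{\mathbf{w}_i}}|_{X_i})$, extract the cohomology of the conormal sheaf from $0\to\mathscr{I}_{X_i}^2\to\mathscr{I}_{X_i}\to\mathscr{I}_{X_i}/\mathscr{I}_{X_i}^2\to 0$ via the pfaffian resolution and the Lemma~\ref{Lem:1} resolution of $\mathscr{I}_{X_i}^2$, and finish with the conormal long exact sequence, reading off $h^{1,1}=1$ from the same computation. The plan is correct and matches the paper's argument in all essential steps.
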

\begin{proof}
In this proof, we simply write $X=X_{i}$ and $\mathbb{P}_{\mathbf{w}}=\mathbb{P}_{\mathbf{w}_{i}}$ for some $i=5,7,10$. 
Twisting the pfaffian resolution of the structure sheaf, 
we know that  $H^{i}(X,\mathscr{O}_{X}(-j)) \cong H^{i+3}(\mathbb{P}_{\mathbf{w}},\mathscr{O}_{\mathbb{P}_{\mathbf{w}}}(-|\mathbf{w}|-j))\ (j=1,2)$, 
which do not vanish only for $i=3$. 
Restricting the weighted analogue of the Euler sequence to $X$, we obtain
$$
0 \longrightarrow \varOmega_{\mathbb{P}_{\mathbf{w}}}\otimes_{\mathscr{O}_{\mathbb{P}_{\mathbf{w}}}} \mathscr{O}_{X}
\longrightarrow \bigoplus_{i=0}^{6} \mathscr{O}_{X}(-w_{i})
\longrightarrow \mathscr{O}_{X} \longrightarrow 0.
$$
Since $X$ is a smooth Calabi--Yau threefold, the long exact sequence induced by the short exact sequence above yields 
$H^{i}(X,\varOmega_{\mathbb{P}_{\mathbf{w}}}\otimes_{\mathscr{O}_{\mathbb{P}_{\mathbf{w}}}} \mathscr{O}_{X})=0 \ (i=0,2)$, 
$H^{1}(X,\varOmega_{\mathbb{P}_{\mathbf{w}}}\otimes_{\mathscr{O}_{\mathbb{P}_{\mathbf{w}}}} \mathscr{O}_{X}) \cong \mathbb{C}$ and the exact sequence 
\begin{align}
0 \longrightarrow H^{2}(X,\mathscr{O}_{X})
& \longrightarrow H^{3}(X,\varOmega_{\mathbb{P}_{\mathbf{w}}}\otimes_{\mathscr{O}_{\mathbb{P}_{\mathbf{w}}}} \mathscr{O}_{X}) \notag \\
\longrightarrow & H^{3}(X,\bigoplus_{i=0}^{6} \mathscr{O}_{X}(-w_{i}))
\longrightarrow H^{3}(X, \mathscr{O}_{X}) \longrightarrow 0.\notag 
\end{align}
 We hence have 
 $h^{3}(X,\varOmega_{\mathbb{P}_{\mathbf{w}}}\otimes_{\mathscr{O}_{\mathbb{P}_{\mathbf{w}}}} \mathscr{O}_{X})
 =h^{3}(X,\bigoplus_{i=0}^{6} \mathscr{O}_{X}(-w_{i}))-1$. 
 From the resolution of $\mathscr{F}_{\bullet}\rightarrow\mathscr{I}_{X}^{2}$ in Lemma \ref{Lem:1} we obtain
$$
h^{4}(\mathbb{P}_{\mathbf{w}},\mathscr{I}_{X}^{2})- h^{5}(\mathbb{P}_{\mathbf{w}},\mathscr{I}_{X}^{2})
=\sum_{i=1}^{3}(-1)^{i+1}h^{6}(\mathbb{P}_{\mathbf{w}},\mathscr{F}_{i})-h^{6}(\mathbb{P}_{\mathbf{w}},\mathscr{I}_{X}^{2}).
$$
The pfaffian resolution gives $H^{4}(\mathbb{P}_{\mathbf{w}},\mathscr{I}_{X}) 
\cong H^{6}(\mathbb{P}_{\mathbf{w}},\mathscr{O}_{\mathbb{P}_{\mathbf{w}}}(-|\mathbf{w}|)) \cong \mathbb{C}$ and 
$H^{i}(X,\mathscr{I}_{X})=0$ (otherwise). 
Since we assume that $X$ is smooth, we have the short exact sequence of sheaves 
$0 \rightarrow \mathscr{I}_{X}^{2} \rightarrow \mathscr{I}_{X} \rightarrow \mathscr{N}_{X/\mathbb{P}_{\mathbf{w}}}^{\vee} \rightarrow 0$. 
The induced long exact sequence gives 
$H^{i}(\mathbb{P}_{\mathbf{w}}, \mathscr{N}_{X/\mathbb{P}_{\mathbf{w}}}^{\vee})=0 \ (0 \le i \le 2)$ and 
$H^{5}(\mathbb{P}_{\mathbf{w}},\mathscr{I}_{X}^{2})=H^{6}(\mathbb{P}_{\mathbf{w}},\mathscr{I}_{X}^{2})=0$. 
Moreover, we also have the short exact sequence
$$
0 \longrightarrow H^{3}(\mathbb{P}_{\mathbf{w}}, \mathscr{N}_{X/\mathbb{P}_{\mathbf{w}}}^{\vee})
\longrightarrow   H^{4}(\mathbb{P}_{\mathbf{w}},\mathscr{I}_{X}^{2}) 
\longrightarrow H^{4}(\mathbb{P}_{\mathbf{w}},\mathscr{I}_{X}) \longrightarrow 0. 
$$
It then follows immediately that 
$$
h^{3}(\mathbb{P}_{\mathbf{w}},\mathscr{N}_{X/\mathbb{P}_{\mathbf{w}}}^{\vee})=h^{3}(X,\mathscr{N}_{X/\mathbb{P}_{\mathbf{w}}}^{\vee})
=\sum_{i=1}^{3}(-1)^{i+1}h^{6}(\mathbb{P}_{\mathbf{w}},\mathscr{F}_{i}).
$$
On the other hand, the conormal exact sequence yields the exact sequence 
$$
0 \longrightarrow H^{2}(X,\varOmega_{X})
\longrightarrow H^{3}(X,\mathscr{N}_{X/\mathbb{P}_{\mathbf{w}}}^{\vee}) 
\longrightarrow H^{3}(X, \varOmega_{\mathbb{P}_{\mathbf{w}}}\otimes_{\mathscr{O}_{\mathbb{P}_{\mathbf{w}}}} \mathscr{O}_{X}) \longrightarrow 0
$$
and $H^{1}(X,\varOmega_{X}) \cong \mathbb{C}$. 
We finally establish the formula 
\begin{align}
h^{2}(X,\varOmega_{X})&=h^{3}(X,\mathscr{N}_{X/\mathbb{P}_{\mathbf{w}}}^{\vee})
-h^{3}(\varOmega_{\mathbb{P}_{\mathbf{w}}}\otimes_{\mathscr{O}_{\mathbb{P}_{\mathbf{w}}}} \mathscr{O}_{X_{10}})\notag \\
&=\sum_{i=1}^{3}(-1)^{i+1}h^{6}(\mathbb{P}_{\mathbf{w}},\mathscr{F}_{i})-h^{3}(X,\bigoplus_{i=0}^{6} \mathscr{O}_{X}(-w_{i})).\notag
\end{align}
Therefore $h^{1,2}$ is determined by the explicit description of $\mathscr{F}_{\bullet}\rightarrow\mathscr{I}_{X}^{2}$ derived in Lemma \ref{Lem:1}. 
\end{proof}
 
The existence of smooth Calabi--Yau threefolds $X_{5}$, $X_{7}$ and $X_{10}$ with the computed topological invariants was previously conjectured 
by C. van Enckevort and D. van Straten from the viewpoint of Calabi--Yau equations in \cite{van2}. 
Regrettably it has not been settled yet whether they are simply connected or not.

 \subsection{Complete Intersection Type}
 In this subsection we study complete intersections of pfaffian varieties and hypersufaces in weighted projective spaces. 
 The main idea is to use pfaffian varieties as codimension 3 analogue of hypersurfaces in the ambient space. 
 \begin{Def}
Set $t=1$ and $\mathscr{E}_{25}=\mathscr{O}^{\oplus 5}_{\mathbb{P}^{9}}$.   
Two generic global sections $N_{1}, N_{2} \in H^{0}(\mathbb{P}^{9}, \wedge^{2}\mathscr{E}_{25}(1))$ define alternating morphisms 
$N_{1},N_{2} : \mathscr{E}_{25}^{\vee}(-1) \rightarrow \mathscr{E}_{25}$. 
Define $X_{25}$ as the common degeneracy loci of $N_{1}$ and $N_{2}$. 
 \end{Def}
 
 Since the pfaffian sixfold associated to the data $(\mathscr{E}_{25},N_{i})$ is isomorphic to Gr$(2,5) \subset \mathbb{P}^{9}$, 
 $X_{25}$ may be seen as a complete intersection of two Grassmannians embedded in two different ways $i_{j}:\mathrm{Gr}(2,5)\hookrightarrow \mathbb{P}^{9} \ ( j=1,2)$. 
 $$
 X_{25}=i_{1}(\mathrm{Gr}(2,5))\cap i_{2}(\mathrm{Gr}(2,5))
$$
  
 \begin{Lem}\label{Res}
Let $X$ be the pfaffian variety associated to $(\mathscr{E}_{25},N_{i})$. 
Then $\mathscr{I}_{X}^{2}$ has the following resolution. 
$$
0 \longrightarrow \mathscr{O}_{\mathbb{P}^{9}}(-6)^{\oplus10}\longrightarrow \mathscr{O}_{\mathbb{P}^{9}}(-5)^{\oplus24}
\longrightarrow \mathscr{O}_{\mathbb{P}^{9}}(-4)^{\oplus15}\longrightarrow \mathscr{I}_{X}^{2}\longrightarrow 0
$$
\end{Lem}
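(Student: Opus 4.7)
My plan is to imitate the proof of Lemma \ref{Lem:1}: apply Lemma \ref{Lem:0} to the data $(\mathscr{E}_{25},N_i)$ with $r=2$, and then pin down the twists on $\mathbb{P}^9$. With $F$ of rank $5$, Boffi's complex furnishes the three free terms $\Lambda^3 F$, $L_{(4,1)}F$, $S_2 F$ of ranks $10$, $24$, $15$ respectively. I would then identify $\Lambda^3 F$ with $5\times 5$ skew-symmetric matrices, $L_{(4,1)}F$ with $5\times 5$ matrices whose $(1,1)$-entry vanishes, and $S_2 F$ with $5\times 5$ symmetric matrices, so that the differentials take the explicit matrix form $\vartheta_3(X) = NX - (NX)_{1,1}I$, $\vartheta_2(X) = XN + (XN)^t$, $\vartheta_1(X) = P^t X P$, exactly as in Lemma \ref{Lem:1}.

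The twists are then dictated by the two numerical inputs $t=1$ and $s = c_1(\mathscr{E}_{25}) + rt = 2$. Because $\mathscr{E}_{25}$ is trivial, $N$ is a skew-symmetric $5\times 5$ matrix of \emph{linear} forms on $\mathbb{P}^9$, so each pfaffian $P_i$ of a $4\times 4$ principal minor of $N$ is quadratic, and $P:\mathscr{O}_{\mathbb{P}^9}(-2)^{\oplus 5} \to \mathscr{O}_{\mathbb{P}^9}$ is a row of quadrics. Hence $\vartheta_1 = S_2 P$ has quartic entries, forcing the rightmost term to be $S_2(\mathscr{E}_{25}(-2)) = \mathscr{O}_{\mathbb{P}^9}(-4)^{\oplus 15}$; the maps $\vartheta_2$ and $\vartheta_3$ are linear in $N$, so each shifts the twist by $\mathscr{O}(-1)$, producing $\mathscr{O}_{\mathbb{P}^9}(-5)^{\oplus 24}$ in the middle and, finally, $\Lambda^3(\mathscr{E}_{25}(-2)) = \mathscr{O}_{\mathbb{P}^9}(-6)^{\oplus 10}$ on the left.

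There is no real obstacle here: the whole argument is bookkeeping once Lemma \ref{Lem:0} and the matrix identifications from the proof of Lemma \ref{Lem:1} are in hand. The only subtlety worth verifying is that exactness of Boffi's complex survives sheafification, which holds because for generic $N_i$ the pfaffian locus $X$ has the expected codimension $3$ in the smooth ambient $\mathbb{P}^9$, so the complex is acyclic by the same genericity argument used for $X_5$, $X_7$, $X_{10}$.
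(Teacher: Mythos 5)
Your proposal is correct and follows the same route as the paper, which proves this lemma simply by invoking the Boffi--Buchsbaum resolution of $I^{2}$ (Lemma \ref{Lem:0}) exactly as in Lemma \ref{Lem:1}; your twist bookkeeping ($t=1$, $s=2$, quadratic pfaffians, linear entries of $N$) and the ranks $10$, $24$, $15$ all check out. The only difference is that you spell out the verification that the paper leaves implicit, including the (correct) remark that exactness after sheafification follows from $X$ having the expected codimension $3$.
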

\begin{proof}This may be proved in a similar fashion to Lemma \ref{Lem:1}.
\end{proof}
 
 \begin{Prop}
 $X_{25}$ is a smooth Calabi--Yau threefold with the following topological invariants.
$$
h^{1,1}=1, \ h^{1,2}=51, \ \int_{X_{25}}H^{3}=25, \ \int_{X_{25}}c_{2}(X_{25})\cdot H = 70
$$
 \end{Prop}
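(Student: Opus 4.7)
The plan is to follow the template established for $X_5$, $X_7$, $X_{10}$, replacing the single pfaffian resolution by the tensor product of two copies of the Buchsbaum-Eisenbud resolution of $\mathscr{O}_{\mathrm{Gr}(2,5)}$ in $\mathbb{P}^9$. Write $Y_j = i_j(\mathrm{Gr}(2,5))$ for $j=1,2$; each $Y_j$ is a smooth degree-$5$ sixfold in $\mathbb{P}^9$ with self-dual pfaffian resolution
$$
0 \to \mathscr{O}_{\mathbb{P}^9}(-5) \to \mathscr{E}_{25}^{\vee}(-3) \to \mathscr{E}_{25}(-2) \to \mathscr{O}_{\mathbb{P}^9} \to \mathscr{O}_{Y_j} \to 0.
$$
First I would establish smoothness: since $\mathrm{Gr}(2,5)$ is homogeneous, translating $Y_2$ by a generic element of $\mathrm{PGL}(10)$ yields a Bertini-type argument showing that for generic $(N_1,N_2)$ the intersection $X_{25} = Y_1 \cap Y_2$ is scheme-theoretically transverse, hence a smooth threefold.

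Transversality gives $\mathrm{Tor}^{>0}_{\mathscr{O}_{\mathbb{P}^9}}(\mathscr{O}_{Y_1}, \mathscr{O}_{Y_2}) = 0$, so the tensor product of the two resolutions above is a self-dual locally free resolution of $\mathscr{O}_{X_{25}}$ of length $6$ whose leftmost term is $\mathscr{O}_{\mathbb{P}^9}(-10)$. Applying $\mathscr{H}om(-,\omega_{\mathbb{P}^9})$ and identifying $\omega_{X_{25}} \cong \mathscr{E}xt^{6}(\mathscr{O}_{X_{25}}, \omega_{\mathbb{P}^9})$ yields $\omega_{X_{25}} \cong \mathscr{O}_{X_{25}}$, exactly as in the earlier proof of triviality of the dualizing sheaf, since $\omega_{\mathbb{P}^9}\otimes\mathscr{O}(10) \cong \mathscr{O}$. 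Twisting the resolution by $\mathscr{O}(-k)$ for $k=1,2$ and chasing line-bundle cohomology on $\mathbb{P}^9$ gives $H^{1}(X_{25},\mathscr{O}) = H^{2}(X_{25},\mathscr{O}) = 0$, confirming the Calabi-Yau property. For the degree, Bezout in $H^{*}(\mathbb{P}^9) = \mathbb{Z}[H]/H^{10}$ gives $[Y_1]\cdot[Y_2] = 25H^{6}$ and hence $\int_{X_{25}} H^3 = 25$. The Kodaira-vanishing plus Hirzebruch-Riemann-Roch argument of the preceding proposition on $c_2 \cdot H$ then forces
$$
10 = h^{0}(\mathbb{P}^9,\mathscr{O}(1)) = \chi(X_{25},\mathscr{O}(H)) = \tfrac{25}{6} + \tfrac{1}{12}\int_{X_{25}} c_{2}(X_{25})\cdot H,
$$
so $\int_{X_{25}} c_{2}(X_{25}) \cdot H = 70$.

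For the Hodge numbers $h^{1,1}=1$ and $h^{1,2}=51$, the strategy is to replicate the proof of Theorem \ref{Hodge}: restrict the Euler sequence on $\mathbb{P}^9$ to $X_{25}$, use the conormal exact sequence
$$
0 \to \mathscr{I}_{X_{25}}^{2} \to \mathscr{I}_{X_{25}} \to \mathscr{N}^{\vee}_{X_{25}/\mathbb{P}^9} \to 0,
$$
and combine the preceding lemma (the resolution of $\mathscr{I}^{2}$ for a single pfaffian associated to $(\mathscr{E}_{25},N_i)$) with the tensor-product resolution of $\mathscr{O}_{X_{25}}$ to express $\mathscr{I}_{X_{25}}$ and $\mathscr{I}_{X_{25}}^{2}$ in terms of twisted line bundles on $\mathbb{P}^9$; dimension counts for the resulting cohomologies then pin down $h^{1,2}$. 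I expect the main obstacle to be precisely this final bookkeeping: since $X_{25}$ is cut out by the intersection of two codimension-$3$ Gorenstein loci rather than a single one, one has to unwind how the ideals $\mathscr{I}_{Y_1}$ and $\mathscr{I}_{Y_2}$ interact inside $\mathscr{I}_{X_{25}}$, and their squares similarly. No new conceptual ingredient should be required beyond careful cohomology accounting analogous to that of Theorem \ref{Hodge}.
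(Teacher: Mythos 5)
Your treatment of smoothness (generic translate/Kleiman transversality), the Calabi--Yau property via the tensor-product resolution, the degree via Bezout, and $\int c_2\cdot H$ via Kodaira vanishing plus Hirzebruch--Riemann--Roch all match the paper, which likewise disposes of these points by saying they "follow in the same manner as before." The divergence, and the genuine gap, is in the Hodge number computation. You propose to run the argument of Theorem \ref{Hodge} verbatim with ambient space $\mathbb{P}^9$, which requires the conormal sequence $0 \to \mathscr{I}_{X_{25}}^{2} \to \mathscr{I}_{X_{25}} \to \mathscr{N}^{\vee}_{X_{25}/\mathbb{P}^9} \to 0$ and hence a locally free resolution of $\mathscr{I}_{X_{25}}^{2}$ where $\mathscr{I}_{X_{25}} = \mathscr{I}_{Y_1}+\mathscr{I}_{Y_2}$. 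The lemma you invoke only resolves $\mathscr{I}_{Y_j}^{2}$ for a \emph{single} Grassmannian; it says nothing about $(\mathscr{I}_{Y_1}+\mathscr{I}_{Y_2})^{2}$, whose cross term $\mathscr{I}_{Y_1}\mathscr{I}_{Y_2}$ is exactly the object you would need to control. Calling this "careful cohomology accounting" understates it: without a new structural input the computation does not close, so as written the claim $h^{1,2}=51$ is not established.

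The paper sidesteps this entirely by a two-step reduction: it regards $X=\mathrm{Gr}(2,5)$ (the pfaffian sixfold for $N_1$) as the ambient space and $Y=X_{25}$ as a pfaffian threefold in $X$ associated to $(\mathscr{O}_X^{\oplus 5},N_2)$. Then the conormal sequence of $Y$ in $X$ together with the restriction to $Y$ of the conormal sequence of $X$ in $\mathbb{P}^9$ chain together, the $\mathscr{I}^{2}$-lemma applies in its single-pfaffian form to compute $h^{3}(Y,\mathscr{N}_{Y/X}^{\vee})=75$, and the symmetry between the two Grassmannians produces the factor of $2$ in $h^{2}(Y,\varOmega_{Y}) = 2\,h^{3}(Y,\mathscr{N}_{Y/X}^{\vee})-h^{3}(Y,\varOmega_{\mathbb{P}^9}\otimes\mathscr{O}_{Y})=51$. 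An alternative repair of your one-step plan is to use transversality to split $\mathscr{N}^{\vee}_{X_{25}/\mathbb{P}^9}\cong \mathscr{N}^{\vee}_{Y_1/\mathbb{P}^9}|_{X_{25}}\oplus\mathscr{N}^{\vee}_{Y_2/\mathbb{P}^9}|_{X_{25}}$ and compute each summand from the single-Grassmannian conormal sequence restricted to $X_{25}$; this recovers the same $2\times 75$ without ever touching $\mathscr{I}_{X_{25}}^{2}$. Either way, some such reduction is a necessary ingredient, not optional bookkeeping.
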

\begin{proof}
The basic strategy is to divide the construction of $X_{25}$ into two steps and repeat the similar argument in the previous subsection. 
The Grassmannian description guarantees the smoothness of $X_{25}$. 
It is also easy to see that $X_{25}$ is a Calabi--Yau threefold. 
$\int_{X_{25}}H^{3}$ and $\int_{X_{25}}c_{2}(X_{25})\cdot H$ may be determined in the same manner as before. 
The only non-trivial part is the determination of the Hodge numbers and we sketch a proof. \\

Let $X$ be the pfaffian sixfold associated to $(\mathscr{E}_{25},N_{1})$, which is isomorphic to Gr$(2,5)$. 
Then $Y=X_{25}$ is the pfaffian threefold associated to $(\mathscr{O}^{\oplus 5}_{X},N_{2})$. 
A straightforward computation with Lemma \ref{Res} shows that $h^{3}(Y,\mathscr{N}_{Y/X}^{\vee})=75$ and there is an exact sequence
\begin{align}
0\longrightarrow H^{2}(X,\varOmega_{X}\otimes_{\mathscr{O}_{\mathbb{P}_{\mathbf{w}}}}\mathscr{O}_{Y})
&\longrightarrow H^{3}(X,\mathscr{N}_{Y/X}^{\vee}\otimes_{\mathscr{O}_{\mathbb{P}_{\mathbf{w}}}}\mathscr{O}_{Y})\notag \\
\longrightarrow H^{3}(X,\varOmega_{\mathbb{P}^{9}}&\otimes_{\mathscr{O}_{\mathbb{P}_{\mathbf{w}}}}\mathscr{O}_{Y})
\longrightarrow H^{3}(X,\varOmega_{X}\otimes_{\mathscr{O}_{\mathbb{P}_{\mathbf{w}}}}\mathscr{O}_{Y})\longrightarrow 0. \notag
\end{align}
Combining this with the long exact sequence induced from the conormal sequence, we obtain 
\begin{align}
h^{2}(Y,\varOmega_{Y})
=&h^{3}(Y,\mathscr{N}_{Y/X}^{\vee})+h^{2}(X,\varOmega_{X}\otimes_{\mathscr{O}_{\mathbb{P}_{\mathbf{w}}}} \mathscr{O}_{Y})
-h^{3}(X,\varOmega_{X}\otimes_{\mathscr{O}_{\mathbb{P}_{\mathbf{w}}}} \mathscr{O}_{Y}) \notag\\
=&2 h^{3}(Y,\mathscr{N}_{Y/X}^{\vee})-h^{3}(X,\varOmega_{\mathbb{P}^{9}}\otimes_{\mathscr{O}_{\mathbb{P}_{\mathbf{w}}}} \mathscr{O}_{Y})=51.\notag
\end{align}
\end{proof}

The existence of a smooth Calabi--Yau threefold with the computed topological invariants was also predicted in \cite{van2}. 
This Calabi--Yau equation has two maximally unipotent monodromy points of the same type and this may be explained by the self-duality of  Gr$(2,5)$. 

\begin{Ex}
A complete intersection of a pfaffian variety associated to $\mathscr{F}_{10}=\StrO_{\mathbb{P}_{\mathbb{P}_{(1^{7},2)}}}^{\oplus5}$ 
and a quartic hypersurface in $\mathbb{P}_{(1^{7},2)}$ yields a smooth Calabi--Yau threefold $Y_{10}$ with the following topological invariants.
$$
h^{1,1}=1, \ h^{1,2}=101, \ \int_{Y_{10}}H^{3}=10, \ \int_{Y_{10}}c_{2}(Y_{10})\cdot H = 64
$$
We expect this to coincides with the double covering of Fano threefold in the list of C. Borcea \cite{van2}. 
\end{Ex}

\begin{Ex}
A complete intersection of a pfaffian variety associated to $\mathscr{F}_{5}=\StrO_{\mathbb{P}_{(1^{6},2,3)}}^{\oplus5}$ 
and a sextic hypersurface in $\mathbb{P}_{(1^{6},2,3)}$ yields a Calabi--Yau threefold $Y_{5}$.  
Assuming it is smooth, we can compute the topological invariants of $Y_{5}$.  
$$
h^{1,1}=1, \ h^{1,2}=156, \ \int_{Y_{5}}H^{3}=5, \ \int_{Y_{5}}c_{2}(Y_{5})\cdot H = 62
$$
Although we could not find a smooth example of $Y_{5}$, 
the existence of a Calabi--Yau threefold with the above invariants was predicted in \cite{van2}. 
%The form of the power series solution of the Calabi-Yau equation of No.63 also reflects the construction of $Y_{5}$.
\end{Ex}

The author is grateful to Makoto Miura for indicating the existence of $X_{25}$, $Y_{5}$ and $Y_{10}$.  
There are many choices for locally free sheaves $\mathscr{E}$ of odd rank and weights $ \mathbf{w}$ that yield Calabi--Yau threefolds, 
but there does not seem to exist any other smooth example that is not previously known. 
There is, nevertheless, an interesting example $X_{9}$, which we will analyze in Section 5.

\section{Mirror Symmetry for Degree 13 Pfaffian}
\subsection{Mirror Partner}
Our main aim of this subsection is to explicitly construct a mirror family of $X_{13}$. 
As $X_{13}$ is not a complete intersection Calabi--Yau threefold, the Batyrev--Borisov mirror construction is not applicable. 
We shall first briefly review the tropical mirror construction proposed by J. B\"{o}hm. 
His construction reproduces the conventional Batyrev--Borisov mirror construction for complete intersection Calabi--Yau in toric Fano varieties.  
For a thorough treatment of the tropical mirror construction, we refer the reader to the original paper \cite{boehm}. \\

We begin our exposition by recalling the Batyrev--Borisov mirror construction, using the standard notation in \cite{cox}. 
Let $M$ and $N=\mathrm{Hom}(M,\mathbb{Z})$ dual free abelian groups of rank $d$, 
and $M_{\mathbb{R}}$ and $N_{\mathbb{R}}$ be the scalar extension of $M$ and $N$ respectively.  
Suppose that $\mathbb{P}_{\Delta}$ is an $n$-dimensional toric variety associated with the normal fan $\Sigma_{\Delta}$ of an integral polytope $\Delta \subset M$. 
The Cox ring $S=\C[x_{r} | r \in \Sigma_{\Delta}(1)]$ of $\mathbb{P}_{\Delta}$ is graded 
by Chow group $A_{n-1}(\mathbb{P}_{\Delta})$ via the presentation sequence
$$
0 \longrightarrow M \stackrel{A}{\longrightarrow} \mathbb{Z}^{\Sigma_{\Delta}(1)} \longrightarrow A_{n-1}(\mathbb{P}_{\Delta}) \longrightarrow 0.
$$
Suppose that $\Delta$ is reflexive and given a nef-partition $\Delta=\Delta_{1}+\dots+\Delta_{k}$ or equivalently $\Sigma_{\Delta}(1)=I_{1}\cup \dots \cup I_{k}$, 
then a complete intersection Calabi--Yau variety $X=V(I) \subset \mathbb{P}_{\Delta}$ of dimension $d=n-k$ is the zero locus of a generic section
$(f_{i})_{i=1}^{k} \in H^{0}(\mathbb{P}_{\Delta}, \bigoplus_{i=1}^{k} \mathscr{O}_{\mathbb{P}_{\Delta}}(E_{i}))$, 
where $\bigotimes_{i=1}^{k} \mathscr{O}_{\mathbb{P}_{\Delta}}(E_{i}) \cong -K_{\mathbb{P}_{\Delta}}$ corresponds to the nef-partition. \\

Define $\nabla_{i}=\mathrm{Conv.}(\{0\}\cup I_{i})$ and the Minkowski sum $\nabla=\nabla_{1}+\dots+\nabla_{k} \subset N$. 
Then the following holds.
$$
\Delta^{*}=\mathrm{Conv.}(\nabla_{1}\cup \dots \cup \nabla_{k}), \ \ \ \nabla^{*}=\mathrm{Conv.}(\Delta_{1}\cup \dots \cup \Delta_{k})
$$
$\nabla=\nabla_{1}+\dots+\nabla_{k}$ is again reflexive and this gives a nef-partition of $\nabla$, called the dual nef-partition. 
We define a complete intersection Calabi--Yau variety $\check{X}$ by using $\nabla \subset N$.
Choosing a maximal projective subdivision of the normal fan of $\Delta$ and $\nabla$, 
we get families $\mathscr{X}$ and $\check{\mathscr{X}}$ of Calabi--Yau varieties, which are conjectured to form a mirror pair. 
It is important to observe that giving a nef-partition is essentially equivalent to determining a union of toric varieties $X_{0}=V(I_{0})$ 
to which a general fiber of the family $\mathscr{X}$ maximally degenerates.\\

Let $I_{0}$ be a reduced monomial ideal in the Cox ring $S$.  
The degree $0$ homomorphisms $\mathrm{Hom}(I_{0} , S/I_{0})_{0}$ form a finite dimensional vector space. 
The torus $T=\C^{\Sigma_{\Delta}(1)}$ acts on $S$ and thus on $\mathrm{Hom}(I_{0} , S/I_{0})_{0}$ as well. 
So the vector space has a basis of deformations which are characters of $T$. 
Any such character $\rho$ corresponds to an element $m_{\rho} \in M \cong \mathrm{Im}(A)$ as it is of degree $0$.  
Given a flat family $\mathscr{X}$ of Calabi--Yau varieties in $\mathbb{P}_{\Delta}$ with special fiber $X_{0}$ 
such that the corresponding ideal $I_{0} \subset S$ is a reduced monomial ideal. 
We represent the complex moduli space of a generic fiber $X$ of $\mathscr{X}$ by a one-parameter family $\mathscr{X}'$ as follows. 
Take a $T$-invariant basis $\rho_{1}, \dots \rho_{l} \in \mathrm{Hom}(I_{0} , S/I_{0})_{0}$ of the tangent space 
of the component of Hilbert scheme containing $\mathscr{X}$ at $X_{0}$ 
and assume that the tangent vector $v=\sum_{i}^{l}a_{i}\rho_{i}$ of $\mathscr{X}'$ at $X_{0}$ satisfies $a_{i}\ne 0$ for all $i$.  
The elements $\rho_{1}, \dots, \rho_{l}$ correspond to elements $m_{1},\dots, m_{l} \in M$ of the lattice of monomials of $\mathbb{P}_{\Delta}$. 
The construction of the first order deformation of a mirror family $\overline{\mathscr{X}'}$ comes with a natural map 
via the interpretation of lattice points as deformations and divisors (see also the monomial divisor map discussed in \cite{cox}).  
Take the convex hull $\nabla^{*}$ of $m_{1},\dots, m_{l}$ 
and define $\mathbb{P}_{\nabla}$ the toric variety associated to the normal fan of the (not necessarily integral) polytope $\nabla$. 
Then the toric divisors of $\mathbb{P}_{\nabla}$ and the induced divisors on a prospective mirror inside will correspond to deformations of $X_{0}$ in $\mathscr{X}$. 
The Bergman complex of $X_{0}$ defines a special fiber $\check{X}_{0} \subset\mathbb{P}_{\nabla}$ and 
the first order deformations $\overline{\check{\mathscr{X}}}$ contributing to the mirror degeneration $\check{X}_{0}$ 
are constructed by the lattice points of the support of $\mathrm{Strata}(X_{0})^{*} \subset  \Delta^{*}$. 
It is sufficient to know a given family up to first order deformation in case of complete intersection or pfaffian varieties as their deformations are unobstructed.\\ 

To relate $\overline{\check{\mathscr{X}}}$ to the initial family $\mathscr{X}$. 
We need to blow-down the ambient toric variety $\mathbb{P}_{\nabla}$ 
to obtain an orbifold quotient of a weighted projective space $\mathbb{P}_{\mathbf{w}}/G$, 
contracting divisors which do not correspond to Fermat deformation of $\mathscr{X}$ (see \cite{boehm} for the Fermat deformation).  
This blow-down is in general not unique and we choose appropriate one on case-by-case basis. 
The next one-parameter family was proposed as a mirror family of the degree 13 pfaffian Calabi--Yau threefold $X_{13}$. 
This family is obtained by deforming the special monomial fiber $\check{X}_{0}$ over $t=0$.   

\begin{Def}[J. B\"{o}hm \cite{boehm}]
Define $\check{\mathscr{X}}=\{\check{X}_{t}\}_{t\in \mathbb{P}^{1}}$ as the one-parameter flat family of the pfaffian Calabi--Yau threefolds 
associated to the following special skew-symmetric $5\times 5$ matrix $N_{t}$ parametrized by $t \in \mathbb{P}^{1}$.
$$
N_{t} = \begin{pmatrix}
                0                            & tx_{0}^{2}                          & x_{5}x_{6}      & x_{3}x_{4} & tx_{2}^{2} \\
                -tx_{0}^{2}       & 0                                             & t(x_{3}+x_{4})& x_{2}                    & x_{1} \\
                -x_{5}x_{6}  & -t(x_{3}+x_{4})             & 0                             & tx_{1}                  & x_{0} \\
                -x_{3}x_{4}  & -x_{2}                                     & -tx_{1}                    & 0                            & t(x_{5}+x_{6}) \\
                -tx_{2}^{2}       & -x_{1}                                     & -x_{0}                      & -t(x_{5}+x_{6}) & 0 \\
                \end{pmatrix} 
$$
 \end{Def}
 
The family $\check{\mathscr{X}}$ is nothing but a special one-parameter family of degree 13 pfaffian Calabi--Yau threefolds. 
More explicitly, the pfaffian ideal sheaf $\mathscr{I}_{\check{X}}$ of $\check{\mathscr{X}}$ is generated by
\begin{align}
 & P_{1} = x_{0}x_{2} - tx_{1}^{2} - t^{2}(x_{3} + x_{4})(x_{5} + x_{6}) \notag \\ 
 & P_{2} = x_{0}x_{3}x_{4} - tx_{5}x_{6}(x_{5} + x_{6}) -t^{2}x_{1}x_{2}^{2} \notag \\
 & P_{3} = x_{1}x_{3}x_{4} - tx_{2}^{3} - t^{2}x_{0}^{2}(x_{5} + x_{6}) \notag \\
 & P_{4} = x_{1}x_{5}x_{6} - tx_{0}^{3} - t^{2}x_{2}^{2}(x_{3} + x_{4}) \notag \\
 & P_{5} = x_{2}x_{5}x_{6} - tx_{3}x_{4} (x_{3} + x_{4}) -t^{2}x_{0}^{2}x_{1}.\notag 
 \end{align}
Since  $\check{X}_{t}$ is originally contained in the toric variety $\mathbb{P}^{6}/\mathbb{Z}_{13}$, 
$\mathbb{Z}_{13}$ acts on $\check{X}_{t}$ as 
\begin{align}
\zeta_{13} \cdot  [x_{0}:x_{1}:& \ x_{2}:x_{3}:x_{4}:x_{5}:x_{6}] = \notag \\
 & [x_{0}:\zeta_{13}^{4}x_{1}:\zeta_{13}^{8}x_{2}:\zeta_{13}^{10}x_{3}:\zeta_{13}^{10}x_{4}:\zeta_{13}^{11}x_{5}:\zeta_{13}^{11}x_{6}],\notag 
\end{align}
where $\zeta_{13} = e^{\frac{2\pi i}{13}}$. 
The fixed locus of the $\mathbb{Z}_{13}$-action consists of six points. 
Four of them $p_{i} = \{x_{i} \ne 0, x_{j}=0\ (j \ne i) \} \ (i=3,4,5,6)$ are singular and  
and other two $p_{i,i+1} = \{x_{i} + x_{i+1} = 0, \ x_{i} \ne 0 \ x_{j}=0\ (j \ne i,i+1)\} \ (i=3,5)$ are smooth.   

\begin{Prop}
For a generic choice of parameter $t \in \mathbb{P}^{1}$, the singular locus of $\check{X}_{t}$ consists of four points $p_{3}$, $p_{4}$, $p_{5}$ and $p_{6}$, 
each of which has multiplicity 12. 
\end{Prop}
\begin{proof}
Let us first work on the singular point $p_{3}$. 
In a neighborhood of $p_{3}$, since $P_{1,4,5}\neq 0$, $\check{X}_{t}$ is defined by the complete intersection of $P_{1}$, $P_{4}$ and $P_{5}$.
Then it is easily seen that the germ of this singularity is isomorphic to a compound Du Val singularity given by the equation 
$$
f(x,y,z,w) = x^{2} + y^{3} + z^{5} + zw^{2}=0, \ (x,y,z,w) \in \mathbb{C}^{4}.
$$
Here the action of $\mathbb{Z}_{13}$ is given by $\zeta_{13} \cdot (x,y,z,w) = (\zeta_{13}^{11}x, \zeta_{13}^{3}y, \zeta_{13}^{7}z, \zeta_{13} w)$.
The Milnor number of this singularity turns out to be 12. 
On the other hand, the Jacobian ideal of $\mathscr{I}_{\check{X}}$ has dimension 0 and degree 48 \footnote{This is done by Macaulay2 \cite{mac}.}. 
Due to symmetry, other singular points are of multiplicity 12 as well and hence we conclude the singular points are only $\{p_{i}\}_{i=3}^{6}$.
\end{proof}

Now we have a family of Calabi--Yau threefolds $\check{\mathscr{X}}=\{\check{X}_{t}\}_{t\in \mathbb{P}^{1}}$ parametrized by $t \in \mathbb{P}^{1}$. 
However, this is not an effective family because $\check{X}_{t} \cong \check{X}_{\zeta_{7} t}$ for $\zeta_{7} = e^{\frac{2\pi i}{7}} $ 
via the map 
$$
[x_{0}:x_{1}:x_{2}:x_{3}:x_{4}:x_{5}:x_{6}] \mapsto [x_{0}:\zeta_{7}^{3}x_{1}:x_{2},\zeta_{7}^{6}x_{3}:\zeta_{7}^{6}x_{4}:\zeta_{7}^{6}x_{5}:\zeta_{7}^{6}x_{6}].
$$

It is proved in \cite{sk} that the $cD_{4}$-singularity above does not admit any crepant resolution. 
However it turns out that the quotient $\{f(x,y,z,w)=0\}/\mathbb{Z}_{13}\subset \C^{4}/\mathbb{Z}_{13}$ admits a crepant resolution; 
in her Ph.D. thesis \cite{fa}, I. Fausk found a crepant resolution $\widetilde{\check{X}_{t}/\mathbb{Z}_{13}}$ of $\check{X}_{t}/\mathbb{Z}_{13}$ 
(for a generic choice of parameter $t\in \mathbb{P}^{1}$) 
and verified the relation 
$$
\chi(\widetilde{\check{X}_{t}/\mathbb{Z}_{13}})=120=-\chi(X_{13})
$$
as mirror symmetry predicts. 
The definition of the family $\check{\mathscr{X}}=\{\check{X}_{t}\}_{t\in \mathbb{P}^{1}}$ shall also be justified by calculating its Picard--Fuchs equation in the following subsection. 

\subsection{Period Map and Picard--Fuchs Equation}
Since $X_{13}$ is a smooth Calabi--Yau threefold, it has a nowhere vanishing holomorphic 3-form up to multiplication with a non-zero constant. 
Although a pfaffian variety is in general not a complete intersection and there is no way of explicitly getting one in general, 
there is an analogous way of obtaining a global section of $\omega_{X_{13}}\cong\varOmega_{X_{13}}^{3}$. 
For the sake of simplicity we restrict ourselves to the degree 13 pfaffian Calabi--Yau threefold $X_{13}$, but generalization to other pfaffian Calabi--Yau threefolds is straightforward. 

\begin{Prop}[E. R\o dland \cite{rod}]
Let $\sigma \in \mathfrak{S}_{5}$ be an element of the symmetric group of degree $5$.   
We have a nowhere vanishing global section of 
$\varOmega_{X_{13}}^{3} \cong \mathscr{O}_{X_{13}}(-7)\otimes_{\mathscr{O}_{X_{13}}} \bigwedge^{3} \mathscr{N}_{X_{13}/\mathbb{P}^{6}}$ 
given by
$$
\alpha = C_{\sigma} \mathrm{Res}_{X}\frac{P_{\sigma(1),\sigma(2),\sigma(3)}\Omega_{0}}{P_{\sigma(1)}P_{\sigma(2)}P_{\sigma(3)}},
$$
where $C_{\sigma}\in \C^{\times}$ is some constant and  
$$
\Omega_{0}=\frac{1}{(2\pi i)^{6}}\sum_{i=0}^{6}(-1)^{i}x_{i}dx_{0}\wedge \dots \wedge \widehat{dx_{i}} \wedge \dots \wedge dx_{6}.
$$
This expression is independent of the choice of $\sigma$ so long as the constant $C_{\sigma}$ is chosen appropriately.
\end{Prop}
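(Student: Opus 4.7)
The plan is to realize $\alpha$ as an iterated Poincar\'e residue on the open sets $U_\sigma := \{P_{\sigma(1),\sigma(2),\sigma(3)} \neq 0\} \subset \mathbb{P}^6$ and then glue the local data. First I would establish two preliminary facts: the $U_\sigma$ cover $X_{13}$, and $X_{13}\cap U_\sigma$ is a codimension-$3$ local complete intersection cut out by $P_{\sigma(1)}, P_{\sigma(2)}, P_{\sigma(3)}$. Since $X_{13}$ is smooth, $N$ has rank exactly $2$ at each of its points, so some $P_{i,j,k}$ is non-vanishing there, which gives the covering. The local complete intersection claim then follows from the quadratic pfaffian (Pl\"ucker-like) syzygy
\[
P_i P_{j,k,l} - P_j P_{i,k,l} + P_k P_{i,j,l} - P_l P_{i,j,k} = 0,
\]
valid for any four of the five indices, which on $U_\sigma$ lets one solve for each remaining $P_l$ as an $\mathscr{O}_{U_\sigma}$-linear combination of the three chosen pfaffians.

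Next I would execute the residue construction. The form $\Omega_0$ trivialises $\omega_{\mathbb{P}^6}(7)$. A short degree count --- using that $\mathscr{E} = \mathscr{O}_{\mathbb{P}^6}(1)\oplus\mathscr{O}_{\mathbb{P}^6}^{\oplus 4}$ pins down $\deg P_i$ and $\deg P_{i,j,k}$ --- shows that
\[
\eta_\sigma := \frac{P_{\sigma(1),\sigma(2),\sigma(3)}\,\Omega_0}{P_{\sigma(1)} P_{\sigma(2)} P_{\sigma(3)}}
\]
is a rational section of $\omega_{\mathbb{P}^6}$ with simple poles along the hypersurfaces $\{P_{\sigma(j)}=0\}$, $j=1,2,3$. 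The iterated Poincar\'e residue therefore produces a holomorphic $3$-form $\alpha_\sigma \in H^0(X_{13}\cap U_\sigma,\omega_{X_{13}})$; pointwise $\alpha_\sigma$ equals $P_{\sigma(1),\sigma(2),\sigma(3)}\,\Omega_0$ divided by the Jacobian $dP_{\sigma(1)}\wedge dP_{\sigma(2)}\wedge dP_{\sigma(3)}$, so it is nowhere vanishing on $U_\sigma \cap X_{13}$ (the Jacobian being invertible there by smoothness of $X_{13}$, and the numerator being non-zero by definition of $U_\sigma$).

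Finally I would glue. Comparing $\alpha_\sigma$ with $\alpha_\tau$ on the overlap $U_\sigma \cap U_\tau$ amounts to expressing each $P_{\tau(k)}$ as an $\mathscr{O}$-combination of the $P_{\sigma(j)}$ via the quadratic pfaffian identity and invoking the change-of-variables formula for iterated residues. The outcome is a single multiplicative scalar $c_{\sigma,\tau}$ with $\alpha_\sigma = c_{\sigma,\tau}\,\alpha_\tau$ on overlaps. Rescaling $\alpha_\sigma$ by suitable $C_\sigma$ produces a global holomorphic section $\alpha \in H^0(X_{13},\omega_{X_{13}})$. Since $X_{13}$ is Calabi--Yau with $\omega_{X_{13}} \cong \mathscr{O}_{X_{13}}$ and $h^0(\omega_{X_{13}}) = 1$, this global form is automatically unique up to a non-zero scalar and everywhere non-vanishing.

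The main obstacle is the sign and constant bookkeeping in the gluing step: one must verify that the scalars $c_{\sigma,\tau}$ extracted from the pfaffian syzygies satisfy the cocycle condition on triple overlaps, so that a uniform choice of $C_\sigma$ exists. This is a delicate but routine multilinear computation rooted in the self-duality of the pfaffian resolution. Conceptually the triviality of $\omega_{X_{13}}$ forces consistency, so the true content lies in exhibiting the explicit residue formula --- which the candidate $\alpha_\sigma$ does.
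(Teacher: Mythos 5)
Your argument is correct and follows the same residue strategy as the paper: work on the charts $U_\sigma=\{P_{\sigma(1),\sigma(2),\sigma(3)}\neq 0\}$, where the three chosen pfaffians cut out $X_{13}$ as a local complete intersection (your justification via the quadratic syzygy $P_iP_{j,k,l}-P_jP_{i,k,l}+P_kP_{i,j,l}-P_lP_{i,j,k}=0$ is a more explicit version of what the paper merely asserts), take the iterated residue, and read off nonvanishing from the local Jacobian formula. The one genuine divergence is in how globality is obtained. You glue the local forms $\alpha_\sigma$ and must then check that the transition factors $c_{\sigma,\tau}$ are honest constants satisfying the cocycle condition; this requires the pfaffian identity identifying $\det M$, for the matrix $M$ expressing one triple of sub-pfaffians in terms of another, with $\pm P_{\tau(1),\tau(2),\tau(3)}/P_{\sigma(1),\sigma(2),\sigma(3)}$ --- a step you correctly flag as the delicate point but do not carry out, and without which the ratio $\alpha_\sigma/\alpha_\tau$ is a priori only a nonvanishing function on the overlap rather than a scalar. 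The paper sidesteps gluing entirely: for a single fixed $\sigma$ it observes that the Jacobian $\partial(P_{\sigma(1)},P_{\sigma(2)},P_{\sigma(3)})/\partial(x_i,x_j,x_k)$ vanishes precisely where $P_{\sigma(1),\sigma(2),\sigma(3)}$ does, hence divides it, so the quotient appearing in the local expression is regular on all of $X_{13}$ and the single formula is already globally defined; the comparison of different $\sigma$'s then comes for free from the triviality of $\varOmega_{X_{13}}^{3}$. The paper's route is shorter and avoids all cocycle bookkeeping; yours makes the role of the pfaffian syzygies explicit and is the more robust pattern when no single chart's formula extends. Both are sound, and the gap you identify in your own write-up (the gluing constants) is exactly the computation that the paper's divisibility observation renders unnecessary.
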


\begin{proof}
First of all, the invariance of the integrand under scaling of the coordinates can be checked for each $\sigma$ 
and thus it is well-defined as a rational 6-form on $\mathbb{P}^{6}$. 
On the affine open set $U_{\sigma(4),\sigma(5)}=\{P_{\sigma(1),\sigma(2),\sigma(3)}\ne0\}$, $\{P_{\sigma(i)}\}_{i=1}^{3}$ forms a complete intersection 
and $P_{\sigma(1)}$, $P_{\sigma(2)}$ and $P_{\sigma(3)}$ can be seen as a part of the local coordinate. 
We may therefore assume that $\{P_{\sigma(1)}$, $P_{\sigma(2)}$, $P_{\sigma(3)}$, $x_{4}$, $x_{5}$, $x_{6}$, $x_{7}$ form the coordinate of $\mathbb{A}^{7}$, 
i.e. $\frac{\partial(P_{\sigma(1)},P_{\sigma(2)},P_{\sigma(3)})}{\partial(x_{1},x_{2},x_{3})}\ne0$. 
Since we have
$$
dP_{\sigma(1)}\wedge dP_{\sigma(2)}\wedge dP_{\sigma(3)}
=\sum_{i<j<k}\frac{\partial(P_{\sigma(1)},P_{\sigma(2)},P_{\sigma(3)})}{\partial(x_{i},x_{j},x_{k})}dx_{i}\wedge dx_{j}\wedge dx_{k},
$$
the residue theorem provides the following holomorphic 3-form on $X_{13} |_{U_{\sigma(4),\sigma(5)}}$.
$$
\alpha=C_{\sigma} \frac{P_{\sigma(1),\sigma(2),\sigma(3)}}
{(2\pi i)^{3} \frac{\partial(P_{\sigma(1)},P_{\sigma(2)},P_{\sigma(3)})}{\partial(x_{0},x_{1},x_{2})}}
\sum_{i=4}^{7}(-1)^{i}x_{i}dx_{4}\wedge \dots \wedge \widehat{dx_{i}} \wedge \dots \wedge dx_{7}
$$
On the other hand, $P_{\sigma(1),\sigma(2),\sigma(3)}$ vanishes if and only if $\{P_{\sigma(i)}\}_{i=1}^{3}$ does not form a complete intersection. 
Therefore the Jacobian $\frac{\partial(P_{\sigma(1)},P_{\sigma(2)},P_{\sigma(3)})}{\partial(x_{1},x_{2},x_{3})}$ divides $P_{\sigma(1),\sigma(2),\sigma(3)}$, and $\alpha$ is globally defined. 
Furthermore, the local description of $\alpha$ shows that it is nowhere vanishing on $X_{13}$. 
Since $X_{13}$ is Calabi--Yau threefold, $\varOmega_{X_{13}}^{3}$ is trivial and the expression of $\alpha$ for each $\nu$ is different merely by a constant. 
\end{proof}

Although a general member $\check{X}_{t}$ of the family is singular, a nowhere vanishing holomorphic 3-form $\alpha$ may be defined 
on the non-singular locus $\check{X}_{t} \setminus \mathrm{Sing}(\check{X}_{t})$. 
Then the period integral of the family $\check{\mathscr{X}}$ is defined as usual since integration can be performed on $3$-cycle away from the singular locus. 
For the sake of convenience we shall work with the singular threefold $\check{X}_{t}$ in $\mathbb{P}^{6}$ instead of a crepant resolution $\widetilde{\check{X}_{t}/\mathbb{Z}_{13}}$. 
Note that Picard--Fuchs equations invariant under resolution of singularities. 
It is also preserved under taking the quotient of the threefold by a finite group under which the 3-from $\alpha$ is invariant. 
More precisely, we can perform the integration on $\check{X}_{t}$ and obtain the genuine period integral by dividing by $13$. \\

At the origin $t=0$ the threefold $\check{X}_{t}$ decomposes into thirteen $3$-dimensional planes and 
hence the origin is a good candidate for a maximally unipotent monodromy point of the one-parameter family $\check{\mathscr{X}}$. 
The fundamental period integral $\Phi_{0}(t)$ (defined up to multiplication by a non-zero scalar) can be obtained by integrating a holomorphic 3-form on a torus cycle that vanishes at the origin $t=0$. 
In what follows we always assume that the fundamental period integral is normalized by setting $\Phi_{0}(0)=1$. 
Fix a 3-dimensional plane $H$ defined by $H=\{ x_{1}=x_{2}=x_{3}=0 \}$. 
On the domain $H\setminus (\{x_{4}=0\}\cup\{x_{5}=0\}\cup\{x_{6}=0\})$, 
there is a cycle given by $|\frac{x_{4}}{x_{0}}|=|\frac{x_{5}}{x_{0}}|=|\frac{x_{6}}{x_{0}}|=\epsilon$, 
which extends to a $3$-dimensional torus cycle $\gamma \in H_{3}(\check{X}_{t}, \mathbb{C})$ for $|t|\ll 1$. \\

\begin{Thm}
Let $\Phi_{0}(t) = \int_{\gamma}\alpha $ be the fundamental period integral of the one-parameter family $\{\check{X}_{t}\}_{t\in \mathbb{P}^{1}}$. 
Then $\Phi_{0}(t)$ has the following expansion near the origin $t=0$.   
$$
\Phi_{0}(t)=\sum_{n=0}^{\infty} \binom{2n}{n}^{2} \sum_{k=0}^{n} \binom{2n+k}{n} \binom{n}{k}^{2} t^{7n} 
$$
It can be observed that $\phi=t^{7} \in \mathbb{P}^{1}$ is the genuine moduli parameter of $\{\check{X}_{t}\}_{t\in \mathbb{P}^{1}}$.
We can thus write $\Phi_{0}(\phi)$ and $\{\check{X}_{\phi}\}_{\phi\in \mathbb{P}^{1}}$. 
Moreover, the Picard--Fuchs operator $\mathscr{D}$ of the family is
\begin{align}
 \mathscr{D} = & 13^{2}\Theta^{4}-\phi(59397\Theta^{4}+117546\Theta^{3}+86827\Theta^{2}+28054\Theta+3380) \notag \\
&+2^{4}\phi^{2}(6386\Theta^{4}-1774\Theta^{3}-17898\Theta^{2}-11596\Theta-2119) \notag \\ 
&+2^{8}\phi^{3}(67\Theta^{4}+1248\Theta^{3}+1091\Theta^{2}+312\Theta+26) -2^{12}\phi^{4}(2\Theta+1)^{4}, \notag
 \end{align}
where $\Theta$ is the Euler operator $\phi \frac{\partial}{\partial \phi}$.
\end{Thm}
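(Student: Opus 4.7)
The proof splits naturally into two computational parts: obtaining the explicit series for $\Phi_0$, and from it extracting the Picard-Fuchs operator $\mathscr{D}$.

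First, for the fundamental period expansion, I would choose $\sigma \in \mathfrak{S}_5$ so that $\{P_{\sigma(1)},P_{\sigma(2)},P_{\sigma(3)}\}$ forms a complete intersection along the plane $H=\{x_1=x_2=x_3=0\}$ (for instance, the triple $(P_1,P_3,P_5)$ lets one solve, as formal power series in $t$, for $x_2,x_3,x_1$ in terms of $x_0,x_4,x_5,x_6$ near $H$). Using the residue representation from the preceding proposition, $\Phi_0(t)$ equals, up to the normalizing constant $C_\sigma$, the iterated residue
\[
\int_{\Gamma} \frac{P_{\sigma(1),\sigma(2),\sigma(3)}\,\Omega_0}{P_{\sigma(1)}P_{\sigma(2)}P_{\sigma(3)}},
\]
where $\Gamma$ is the thickening of $\gamma$ by small circles in the $x_1,x_2,x_3$ directions. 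Dehomogenizing $x_0=1$ and parametrizing $|x_i|=\epsilon$ for $i=4,5,6$, each pfaffian decomposes as a leading monomial in $(x_1,x_2,x_3)$ plus a $t$-correction. Expanding the integrand geometrically in $t$ and integrating term-by-term reduces the computation to extracting constant terms of Laurent polynomials in $x_4,x_5,x_6$. The $\mathbb{Z}_7$ rescaling symmetry recorded just before the theorem forces only powers $t^{7n}$ to appear, which simultaneously justifies $\phi=t^7$ as the genuine moduli parameter; a careful combinatorial bookkeeping of the surviving monomials should then match the double-binomial sum $\sum_n\binom{2n}{n}^2\sum_k\binom{2n+k}{n}\binom{n}{k}^2 t^{7n}$.

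For the Picard-Fuchs operator, I would adopt a standard ansatz: write $\mathscr{D}=\sum_{i=0}^{4} a_i(\phi)\,\Theta^i$ with $a_i(\phi)$ polynomials of degree $\le 4$ in $\phi$. The order $4$ is dictated by $\dim H^3(\check{X},\mathbb{C})=2h^{2,1}+2=4$ for the mirror CY threefold, while the degree bound is an a priori trial that is vindicated by the answer. Compute the series of $\Phi_0,\Theta\Phi_0,\dots,\Theta^4\Phi_0$ to sufficiently high order using the formula produced in Step 1, impose $\mathscr{D}\Phi_0=0$, and solve the resulting finite-dimensional linear system for the $a_i$. Uniqueness up to overall scaling and the exact fit at all further orders yield the displayed operator. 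As a consistency check, one verifies that at $\phi=0$ the indicial polynomial has a quadruple root at $0$ (the expected MUM structure) and that the operator agrees with the Calabi-Yau equation of \cite{van1,van2}.

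The main obstacle is the combinatorial identification of the iterated residue as the stated binomial double sum. Because each $P_i$ has a three-term structure ($t^0$, $t^1$, $t^2$ parts) with mixed couplings between the distinguished pairs $(x_3,x_4)$ and $(x_5,x_6)$, the naive expansion of $1/(P_{\sigma(1)}P_{\sigma(2)}P_{\sigma(3)})$ yields a multi-index sum that does not obviously telescope. The inner sum $\sum_k\binom{2n+k}{n}\binom{n}{k}^2$ is reminiscent of Apéry-style convolutions; I expect it to arise from the $t^2$ contributions coupling those variable pairs, with a Chu--Vandermonde identity applied at an intermediate stage to collapse one summation index. Once that identification is in hand, the Picard-Fuchs step is a mechanical linear-algebra calculation.
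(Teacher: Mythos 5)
Your overall route is the same as the paper's: represent $\int_{\gamma}\alpha$ as an integral of the rational form $P_{\sigma(1),\sigma(2),\sigma(3)}\Omega_{0}/(P_{\sigma(1)}P_{\sigma(2)}P_{\sigma(3)})$ over a $6$-torus $\Gamma$ in the chart $x_{0}=1$, expand geometrically in $t$, extract constant terms, and then pin down $\mathscr{D}$ by linear algebra on the series (the paper instead matches the series to equation No.~99 of the database \cite{van1}; your ansatz-and-solve method is an acceptable, indeed more self-contained, variant). However, there are two concrete problems. First, your suggested triple $(P_{1},P_{3},P_{5})$ does not work: the residue representation with this choice is valid only on $U=\{P_{1,3,5}\neq 0\}$, and here $P_{1,3,5}=n_{2,4}=x_{2}$, which vanishes identically on the plane $H=\{x_{1}=x_{2}=x_{3}=0\}$ carrying the vanishing torus cycle. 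Relatedly, you cannot solve $P_{3}=x_{1}x_{3}x_{4}-tx_{2}^{3}-t^{2}x_{0}^{2}(x_{5}+x_{6})$ for $x_{3}$ as a power series near $H$ (the coefficient $x_{1}x_{4}$ of $x_{3}$ vanishes there), nor $P_{5}$ for $x_{1}$ (which enters only at order $t^{2}$). The paper's choice $\nu=(2,4,5,1,3)$, i.e.\ the triple $(P_{2},P_{4},P_{5})$, is the one that works: $P_{2,4,5}=x_{5}x_{6}$ is nonvanishing on the cycle, and the $t=0$ leading monomials $x_{0}x_{3}x_{4}$, $x_{1}x_{5}x_{6}$, $x_{2}x_{5}x_{6}$ are each linear in exactly one of $x_{3},x_{1},x_{2}$ with unit coefficient on $\Gamma$.

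Second, the actual content of the theorem --- that the constant-term extraction produces exactly $\sum_{n}\binom{2n}{n}^{2}\sum_{k}\binom{2n+k}{n}\binom{n}{k}^{2}\phi^{n}$ --- is asserted ("should then match") rather than derived. The paper carries this out: writing $1/P_{\sigma(i)}$ as the inverse leading monomial times $(1-\sum_{j}a_{i,j})^{-1}$ turns the integrand into $\prod_{i}(1-\sum_{j}a_{i,j})^{-1}\prod_{k}\frac{dx_{k}}{2\pi i x_{k}}$; one then checks that the $x$-independent monomials $\prod a_{i,j}^{n_{i,j}}$ are precisely the products $t_{1}^{a}t_{2}^{b}t_{3}^{c}$ of three explicit combinations each equal to $t^{7}$ (this, not the $\mathbb{Z}_{7}$ symmetry alone, is what proves $\phi=t^{7}$ is the true parameter), and that the multiplicity of each is a product of three multinomial coefficients, after which Vandermonde-type identities collapse the triple sum to the stated form. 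Without this bookkeeping the formula for $\Phi_{0}$, and hence the operator $\mathscr{D}$ obtained from it, is not established.
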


\begin{proof}
Let us work on the affine open subset $U_{0}=\{x_{0}=1\}$. 
Fix a permutation, say $\nu = (2,4,5,1,3)$. 
For the sake of convenience we define $a_{i,j}$ to be
$$
(a_{i,j})= \begin{pmatrix}
                \frac{x_{5}^{2}x_{6}}{x_{3}x_{4}}t   &  \frac{x_{5}x_{6}^{2}}{x_{3}x_{4}}t  &  \frac{x_{1}x_{2}^{2}}{x_{3}x_{4}}t^{2} \\
                \frac{1}{x_{1}x_{5}x_{6}}t  &  \frac{x_{2}^{2}x_{3}}{x_{1}x_{5}x_{6}}t^{2}  &  \frac{x_{2}^{2}x_{4}}{x_{1}x_{5}x_{6}}t^{2}\\
                \frac{x_{3}^{2}x_{4}}{x_{2}x_{5}x_{6}}t  &  \frac{x_{3}x_{4}^{2}}{x_{2}x_{5}x_{6}}t  &  \frac{x_{1}}{x_{2}x_{5}x_{6}}t^{2}\\
                \end{pmatrix}.
$$
Then, near the origin, the period integral is described as
$$
\Phi_{0}(t)=\int_{\gamma}\mathrm{Res}_{X}\frac{P_{2,4,5}}{P_{2}P_{4}P_{5}} \Omega_{0}
=\int_{\Gamma}\frac{1}{\prod_{i=1,3,4}(1-\sum_{j=1}^{3}a_{i,j})} \cdot \bigwedge_{k=1}^{6} \frac{dx_{k}}{2 \pi i x_{k}},
$$
where $\Gamma=\{|x_{i}|=\epsilon \ (i=1,\dots 6)\}$. 
We then expand the denominator of the integrand as a power series in terms of $a_{i,j}$. 
The only terms that contribute the period integral is the products $\prod_{}a_{i,j}^{n_{i,j}}$ that is independent of $x_{i}$. 
Suppose $\prod_{}a_{i,j}^{n_{i,j}} \ (n_{i,j}\in \mathbb{Z}_{\ge0})$ does not contain any $x_{i}$, 
then $\prod_{}a_{i,j}^{n_{i,j}}$ is a product of
\begin{align}
& t_{1}=a_{1,1}a_{1,2}a_{2,3}a_{3,1}a_{3,3}=t^{7}=\phi \notag \\
& t_{2}=a_{1,1}a_{1,2}a_{2,2}a_{3,2}a_{3,3}=t^{7}=\phi \notag\\
& t_{3}=a_{1,1}a_{1,2}a_{1,3}a_{2,1}a_{3,1}a_{3,2}=t^{7}=\phi \notag
\end{align}
and it is easily checked that this expression is unique.
Therefore the period integral $\Phi_{0}(t)$ is essentially a function of $\phi=t^{7}$, and henceforth we write $\Phi_{0}(\phi)$. 
Note that this is compatible with the observation that $\check{X}_{t} \cong \check{X}_{\zeta_{7}t}$. 
Since 
$$
t_{1}^{a}t_{2}^{b}t_{3}^{c}=a_{1,1}^{a+b+c}a_{1,2}^{a+b+c}a_{1,3}^{c}a_{3,1}^{c}a_{3,2}^{b}a_{3,3}^{a}a_{4,1}^{a+c}a_{4,2}^{b+c}a_{4,3}^{a+c}
$$ and 
the coefficient of $\prod a_{i,j}^{n_{i,j}}$ appearing as an integrand of $\Phi_{0}(\phi)$ 
is given by $\prod_{i=1,3,4}\binom{n_{i,1}+n_{i,2}+n_{i,3}}{n_{i,1},n_{i,2},n_{i,3}}$, 
the period integral $\Phi_{0}(\phi)$ can be summarized as
\begin{align}
\Phi_{0}(\phi) = & \sum_{n=0}^{\infty}\sum_{a+b+c=n} \binom{2a+2b+3c}{a+b+c,a+b+c,c}\binom{a+b+c}{c,b,a}\binom{2a+2b+2c}{a+c,b+c,a+b}\phi^{n} \notag\\
= & \sum_{n=0}^{\infty} \sum_{k=0}^{n} \sum_{l=0}^{n-k} \binom{2n+k}{n} \binom{n+k}{n} \binom{n}{k} \binom{n-k}{l} \binom{2n}{n-l} \binom{n+l}{k+l} \phi^{n} \notag\\
= & \sum_{n=0}^{\infty} \sum_{k=0}^{n} \binom{2n+k}{n} \binom{n+k}{n} \binom{n}{k} \sum_{l=0}^{n-k} \binom{n-k}{l} \binom{2n}{n+k} \binom{n+k}{n-l} \phi^{n} \notag\\
= & \sum_{n=0}^{\infty} \binom{2n}{n} \sum_{k=0}^{n} \binom{n+k}{n} \binom{2n}{n}^{2} \sum_{l=0}^{n-k} \binom{n-k}{l} \binom{n+k}{n-l} \phi^{n} \notag\\
= & \sum_{n=0}^{\infty} \binom{2n}{n}^{2} \sum_{k=0}^{n} \binom{2n+k}{n} \binom{n}{k}^{2} \phi^{n},\notag
 \end{align}
where we used relations 
\begin{gather}
\binom{2n}{n-l}\binom{n+l}{k+l}=\binom{2n}{n+k}\binom{n+k}{n-l}, \ \  \binom{n+k}{n}\binom{2n}{n+k}=\binom{2n}{n}\binom{n}{k}, \notag \\
 \sum_{l=0}^{n-k}\binom{n-k}{l} \binom{n+k}{n-l} =\binom{2n}{k}.\notag 
\end{gather}
The period integral $\Phi_{0}(\phi)$ coincides with the power series solution of the Calabi--Yau equation of No.99 in \cite{van1}, 
whose Picard--Fuchs equation is exactly what we are looking for. 
\end{proof}

\begin{Cor}Let $\alpha_{1},\alpha_{2}$ be the roots of $256\phi^{2}+349\phi-1=0$.
Then Riemann's P-Scheme of $\mathscr{D}$ is given by the following.
$$
 \begin{Bmatrix}
 \begin{tabular}{c|ccccc} 
$\phi$  &  $0$  &  $\alpha_{1}$  &  $\alpha_{2}$   &  $13/16$  &  $\infty$ \\ \hline
 $\rho_{1}$               &  $0$  &  $0$  &  $0$  &  $0$  &  $1/2$  \\ \hline
 $\rho_{2}$               &  $0$  &  $1$  &  $1$  &  $1$  &  $1/2$  \\ \hline
 $\rho_{3}$               &  $0$  &  $1$  &  $1$  &  $3$  &  $1/2$  \\ \hline
 $\rho_{4}$              &  $0$  &  $2$  &  $2$  &  $4$  &  $1/2$ \notag
 \end{tabular}
 \end{Bmatrix}
$$
 The conifold points are $\alpha_{1}$ and $\alpha_{2}$.
\end{Cor}

$\mathscr{D}$ has a maximally unipotent monodromy point at the origin $\phi=0 \in \mathbb{P}^{1}$ as expected. 
Observe that $\infty$ is not a maximally unipotent monodromy point in the usual sense but very similar to that. 
This point will be further discussed later.

 \subsection{Picard--Fuchs Equation around $0$ and Curve Counting}
We now briefly review Gromov--Witten and BPS invariants. 
Let $X$ be a Calabi--Yau threefold. 
We define $N_{\beta}^{g}(X)=\int_{[\overline{M}_{g,0}(X,\beta)]^{vir}}1$ as the $0$-point genus $g$ Gromov--Witten invariant of $X$ in the curve class $\beta \in H_{2}(X,\mathbb{Z})$. 
Here $[\overline{M}_{g,0}(X,\beta)]^{vir}$ is the virtual fundamental class of the coarse moduli space of stable maps $\overline{M}_{g,0}(X,\beta)$ of 
expected complex dimension $(1-g)(\dim X-3) + \int_{\beta}c_{1}(X)=0$. 

\begin{Def}
Define BPS invariants $n_{\beta}^{g}(X)$ by the formula
$$
\sum_{\beta \ne0}\sum_{g\ge0}N_{\beta}^{g}(X)\lambda^{2g-2}q^{\beta}
=\sum_{\beta \ne0}\sum_{g\ge0}n_{\beta}^{g}(X)\sum_{k>0}\frac{1}{k}(2\sin(\frac{kt}{2}))^{2g-2}q^{k\beta}.
$$
LHS is the generating function of Gromov--Witten invariants of $N_{\beta}^{g}(X)$ of $X$ in all genera and all nonzero curve classes. 
Matching the coefficients of the two series yields equations determining $n_{\beta}^{g}(X)$ recursively in terms $N_{\beta}^{g}(X)$. 
\end{Def}

As the Picard-Fuchs operator $\mathscr{D}$ of $\check{\mathscr{X}}=\{\check{X}_{\phi}\}_{\phi\in \mathbb{P}^{1}}$ has a maximally unipotent monodromy point at $\phi=0$, 
we can define the mirror map $q(\phi)$ there and calculate the conjectural genus $g$ BPS invariants $n_{d}^{g} \ (d\in \N)$ of $X_{13}$. 
In what follows we will work on the case $g=0,1$ for simplicity. 
Since it is a routine work to calculate the mirror map and the Yukawa couplings, we omit the detail of those computations below. 
For a complete description, see for example \cite{COGP, cox}. \\

A good integral basis of $H_{3}(\check{X}_{\phi}, \mathbb{Z})$, 
which corresponds to the normalized solutions of $\mathscr{D}$ below, determines a canonical coordinate $q$ of the complexified K\"{a}hler moduli of $X_{13}$.
At the origin $\phi=0$ we have two normalized solutions of $\mathscr{D}$, $\Phi_{0}(\phi)$ and $\Phi_{1}(\phi)$. 
$\Phi_{0}(\phi)$ is the fundamental period integral normalized by setting $\Phi_{0}(0)=1$. 
The other period integral $\Phi_{1}(\phi)$ is of the form 
$$
\Phi_{1}(\phi) =(\log(\phi)) \Phi_{0}(\phi)+\Psi(\phi),
$$
where $\Psi(\phi)$ is regular at $\phi=0$ and $\Psi(0)=0$.
The mirror map is then defined by $q(\phi)=\exp(\frac{\Phi_{1}(\phi)}{\Phi_{0}(\phi)})$ and can be expanded as 
$$
q(\phi)=\phi+86\phi^{2}+12901\phi^{3}+2460318\phi^{4}+536898026\phi^{5}+ \dots
$$
Let us recall the definition of the quantum corrected Yukawa coupling 
$$
K_{ttt} (q) = \int_{X_{13}}H^{3}+(q\frac{d}{dq})^{3}\sum_{d\ge1}N_{d}(X_{13})q^{d} \in \mathbb{Q}[[q]]. 
$$
Using the mirror map $q(\phi)$, we may compute the quantum corrected Yukawa coupling
$$
K_{ttt} (q)= 13+647q+129975q^{2}+25451198q^{3}+5134100919q^{4}+ \dots. 
$$
We shall also apply the following BCOV formula \cite{BCOV} for genus $g=1$ Gromov--Witten potential $F_{1}(\phi)$ to $X_{13}$.
$$
F_{1}(\phi)=\frac{1}{2}\log \left\{\frac{\Phi_{0}(\phi)^{\frac{\chi(X_{13})}{12}-3-h^{1,1}}(q\frac{d\phi}{dq})}
{\mathrm{disc}(\phi)^{\frac{1}{6}}\phi^{\frac{\int_{X_{13}}c_{2}(X_{13})\cdot H}{12}+1}}\right\}
$$
Here we assumed that the exponent of the discriminant is $1/6$ as usual. 
The genus $g=0,1$ BPS invariants are given in the following table.   
\begin{center}
 \begin{tabular}{| c | l | l |} \hline
 $d$  &  $n_{d}^{0}$  &  $n_{d}^{1}$\\ \hline
 1      &  647 &  0  \\
 2      &  16166   &  0  \\
 3      &  942613   &  176\\
 4      &  80218296   &  164696\\
 5      &   8418215008   &  78309518\\
 \hline
 \end{tabular}
 \end{center}

Since we may write down explicit equations defining a degree 13 Calabi--Yau threefold $X_{13}$, we may in principle count the number of degree $d$ rational curves on general $X_{13}$ 
 and check that it coincides with $n_{d}^{0}$ as follows. 
Let us write a map $\mathbb{P}^{1} \rightarrow \mathbb{P}^{6}$ as 
$$
 [u:v] \mapsto [\sum_{i=0}^{d}a_{i}u^{i}v^{d-i}:\sum_{i=0}^{d}b_{i}u^{i}v^{d-i}:\dots:\sum_{i=0}^{d}g_{i}u^{i}v^{d-i}].
$$
Then the image of this map is contained in $X_{13}$ if and only if $P_{i}(\mathbf{x}(u,v))=0 \ (i=1,\dots, 5)$ for all $[u:v] \in \mathbb{P}^{1}$. 
This containment condition yields dependent equations in the variables $a_{0},a_{1},\dots,g_{d-1},g_{d}$. 
Since what we want to count is not maps from $\mathbb{P}^{1}$ to $X_{13}$ but rational curves in $X_{13}$, 
we must kill Aut$(\mathbb{P}^{1})$ by normalizing the map. 
As the proportional polynomials also define the same map, the number of the independent parameters turns out to be $7(d+1)-3-1$. 
We predict that the ideal generated by the dependent equations has dimension 0 and the degree $n_{d}^{0}$. \\

When $d=1$, we have nineteen dependent equations in ten variables.  
For a generic choice of $N$, we may suitably normalize the map and compute the degree of the ideal to get the answer $647$, 
as mirror symmetry predicts\footnote{This is done by Macaulay2 \cite{mac}.}. 
Explicit calculation is available upon request.

\subsection{Picard--Fuchs equation around $\infty$}

In his thesis \cite{rod}, E. R\o dland constructed a mirror family for the degree 14 pfaffian Calabi--Yau threefolds 
$X_{14}=\mathrm{Pfaff}(7) \cap \mathbb{P}^{6}$ by orbifolding the initial threefolds. 
His work is notable from two aspects. 
Firstly, this is the first example of mirror symmetry for a non-complete intersection Calabi--Yau threefold with $h^{1,1}=1$.
Secondly, the Picard--Fuchs equation of the mirror family $\check{X}_{14}$ has two maximally unipotent monodromy points; 
$\infty \in \mathbb{P}^{1}$ corresponds to the initial $X_{14}$ and  $0\in \mathbb{P}^{1}$ corresponds to $\mathrm{Gr}(2,7)\cap \mathbb{\check{P}}^{13} \subset \mathbb{\check{P}}^{20}$, 
which is the projective dual of $\mathrm{Pfaff}(7) \cap \mathbb{P}^{6} \subset \mathbb{P}^{20}$. 
In fact this pair is the first example of a derived equivalence between non-birational Calabi--Yau threefolds \cite{bc}. 
K. Hori and D. Tong presented how to describe these Calabi--Yau threefolds 
with GLSM using a non-abelian gauge group in two dimensions \cite{ht}. 
The link between the pfaffian $X_{14}$ and the Grassmannian sections $\mathrm{Gr}(2,7)_{1^{7}}$ was further studied in \cite{hos}, 
in which a thought-provoking phenomenon in the higher genus Gromov--Witten invariants is discovered.\\

In our case, $\infty$ is apparently an interesting point of $\mathscr{D}$ and it seems worthwhile to analyze it in detail
\footnote{This type of special point also appears when we consider, for instance, a Calabi--Yau threefold $\mathbb{P}^{7}_{2^{4}}$. 
The quantum differential equation of  $\mathbb{P}^{7}_{2^{4}}$ is $\theta^{4}-2^{4}q(2\theta+1)^{4}$.}. 
We first see that it makes sense to call $\infty$ a maximally unipotent monodromy point and calculate {\it virtual invariants} there. 
Changing the coordinate from $\phi$ to $1/\phi$ and transforming the gauge by $\sqrt{\phi}$ amount to the change, $\Theta \rightarrow -\Theta \rightarrow -\Theta-1/2,$
in the Euler operator. 
Let us also change the variable from $\phi$ to $-\phi$ for later use. 
Then the Picard--Fuchs operator becomes
\begin{align}
\mathscr{D}^{'}=&2^{20}\Theta^{4}-2^{8}\phi(1072\Theta^4-17824\Theta^3-10888\Theta^2-1976\Theta-145)\notag \\
&+2^{5}\phi^{2}(51088\Theta^4+116368\Theta^3-45264\Theta^2-14228\Theta-1397) \notag \\
&+13\phi^{3}(73104\Theta^4+1536\Theta^3-488\Theta^2+384\Theta+97) +13^{2}\phi^{4}(2\Theta+1)^4. \notag
\end{align}
 Although $\mathscr{D}^{'}$ has a maximally unipotent monodromy point at $\phi=0$, the integrality of mirror symmetry breaks.
 It is observed that there is a preferable choice of variable $\tilde{\phi}=\phi/2^{16}$, 
 with which the integrality of the normalized period, the mirror map and 
 {\it virtual BPS invariants} (see the next paragraph) still holds\footnote{S. Hosono pointed out that the change of the sign and the coefficient $1/2^{16}$ can be justified 
 by the analytic continuation of the local solutions about $0$ to $\infty$.}. 
 The Picard--Fuchs operator $\tilde{\mathscr{D}}$ with respect to this new variable is 
 \begin{align}
 \tilde{\mathscr{D}}=&
 \tilde{\Theta}^{4} -2^{4}\tilde{\phi}(1072\tilde{\Theta}^4-17824\tilde{\Theta}^3-10888\tilde{\Theta}^2-1976\tilde{\Theta}-145) \notag \\
 &+2^{17}\tilde{\phi}^{2}(51088\tilde{\Theta}^4+116368\tilde{\Theta}^3-45264\tilde{\Theta}^2-14228\tilde{\Theta}-1397)  \notag \\
 &+13\cdot2^{28}\tilde{\phi}^{3}(73104\tilde{\Theta}^4+1536\tilde{\Theta}^3-488\tilde{\Theta}^2+384\tilde{\Theta}+97) \notag \\
 &+ 13^{2}2^{44}\tilde{\phi}^{4}(2\tilde{\Theta}+1)^4. \notag
 \end{align}
 This is the Calabi--Yau equation of No.225 in \cite{van2}. 
 However, the positive Euler number corresponding to this equation \cite{van1} excludes a geometric interpretation by a Calabi--Yau threefold with $h^{1,1}=1$. \\
 
 Since the new operator $\tilde{\mathscr{D}}$ has a maximally unipotent monodromy point at the origin $\tilde{\phi}=0$, 
 it makes sense to speak of {\it virtual BPS invariants} $\tilde{n}_{d}^{0}\ (d\in \N)$ corresponding to the origin, 
\begin{center}
 \begin{tabular}{|c | l|} \hline
 $d$  &  $\tilde{n}_{d}^{0}$ \\ \hline
 1      &  70944$a$ \\
 2      &  107300032$a$ \\
 3      &  3707752060576$a$ \\
 4      &  66327758316665792$a$ \\
 5      &   1970671594871618215520$a$ \\
  \hline
 \end{tabular}
 \end{center}
where $a$ is supposed to be {\it the degree of virtual geometry} at the origin\footnote{$a$ is expected to be $1$ in \cite{van2}.}. 
We hope to understand the meaning of this sequence of numbers, which may not come from the conventional Calabi--Yau geometry. 
It would also be interesting to extend the Hori--Tong GLSM description \cite{ht} to our pfaffian Calabi--Yau threefolds.

\subsection{Conclusion}
It is classically known that the monodromy matrix of the quantum differential equation with respect to an appropriate basis 
is expressed in terms of the geometric invariants of the underlying Calabi--Yau threefold with one dimensional moduli. 
In what follows we assume that the origin is a maximally unipotent monodromy point. 
Then  $\int_{X}H^{3}$ and $\int_{X}c_{2}(X)\cdot H$ can be read off from the monodromy around the origin and the conifold point.  
After it is analytically continued to the origin, the conifold-period $z_{2}(t)$ has the form 
$$
z_{2}(t)=\frac{\int_{X}H^{3}}{6}t^{3}+\frac{\int_{X}c_{2}(X)\cdot H}{24}t+\frac{\int_{X}c_{3}(X)}{(2\pi i)^{3}}\zeta(3)
+\sum_{d=1}^{\infty}N_{d}^{0}(X)q^{d},
$$
where $q=e^{2\pi i t}$. 
So we obtain $\int_{X}c_{3}(X)$ as well and have consistency check of $\int_{X}c_{2}(X)\cdot H$. 
It was numerically verified in \cite{van2} that the invariants computed from the differential equation $\mathscr{D}$ 
coincides with the fundamental geometric invariants $\int_{X_{13}}H^{3}, \ \int_{X_{13}}c_{2}(X_{13})\cdot H$ and $\int_{X_{13}}c_{3}(X_{13})$.  
Our claim that $\widetilde{X_{t}/\mathbb{Z}_{13}}$ is a mirror partner of $X_{13}$ is based on the coincidence the fundamental geometric invariants mentioned above.
An alternative and preferable approach to the verification of mirror symmetry is direct computation of the Gromov--Witten invariants of $X_{13}$, such as \cite{tjo}. 
\begin{Conj}
The BPS invariants of the degree 13 pfaffian Calabi--Yau threefold $X_{13}$ 
coincides with the numbers $n_{d}^{g}\ (d\in \N)$ we calculated above, as mirror symmetry predicts.
\end{Conj}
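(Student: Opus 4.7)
The conjecture asserts equality between BPS numbers defined via two very different machineries — direct enumerative geometry on $X_{13}$ and the mirror period computation already carried out — so a complete proof amounts to establishing mirror symmetry for the pair $(X_{13},\check{X}_t/\mathbb{Z}_{13})$. My plan would proceed in three stages: low-degree enumerative verification, a structural A-model/B-model identification via a gauged linear sigma model, and extension to higher genus.

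First, I would carry out direct enumerative checks generalizing the $d=1$ computation in the previous subsection. For a degree-$d$ map $\mathbb{P}^{1}\to \mathbb{P}^{6}$ one gets $7(d+1)$ parameters, which after killing $\mathrm{Aut}(\mathbb{P}^{1})$ by a suitable normalization leaves $7(d+1)-4$. Substituting into the five pfaffians $P_{i}(\mathbf{x}(u,v))=0$ produces $5(d \cdot \deg P_{i}+1)$ polynomial constraints in these parameters; using the pfaffian resolution to control syzygies and a Gr\"{o}bner calculation in Macaulay2 for a generic $N$, I would verify that the resulting ideal is zero-dimensional of degree $16166$ for $d=2$ and $942613$ for $d=3$, matching $n_{2}^{0}$ and $n_{3}^{0}$. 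A parallel analysis for smooth elliptic curves in $X_{13}$ should give a consistency check of $n_{3}^{1}=176$. These are only numerical confirmations, but they would rule out gross errors in the proposed mirror family.

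Second, for a structural proof I would follow the Hori--Tong philosophy referenced in the introduction and construct a GLSM whose geometric phase recovers $X_{13}$ (built from the locally free sheaf $\mathscr{E}=\StrO_{\mathbb{P}^{6}}(1)\oplus\StrO_{\mathbb{P}^{6}}^{\oplus 4}$ and the alternating $N$) and whose Landau--Ginzburg phase, after orbifolding by the $\mathbb{Z}_{13}$ symmetry exhibited above, is identified with (a crepant resolution of) $\check{X}_{t}/\mathbb{Z}_{13}$. Coupling this to the quasimap wall-crossing framework of Ciocan-Fontanine--Kim, the resulting $I$-function on the $X_{13}$ side should be shown to solve the Picard--Fuchs operator $\mathscr{D}$ and coincide with $\Phi_{0}(\phi)$ up to the mirror map. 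An equality of $J$-functions then yields equality of genus-zero Gromov--Witten potentials and hence of $\{n_{d}^{0}\}$.

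Third, to reach arbitrary genus one would combine the BCOV holomorphic anomaly formula used in the text with a rigorous higher-genus mirror theorem in the style of Chang--Guo--Li's NMSP program or Zinger's reduced invariants, transplanted to the pfaffian setting. The main obstacle is clearly Stage 2: although candidate non-abelian GLSMs for pfaffian targets exist, a fully rigorous identification of the quasimap $I$-function of the non-complete-intersection $X_{13}$ with the period $\Phi_{0}(\phi)$ of the proposed singular mirror $\check{X}_{t}/\mathbb{Z}_{13}$ is not currently available; compounding this, $\check{X}_{t}$ carries $cD_{6}$ singularities with no known crepant resolution, so one must work with an orbifold or derived resolution and justify that the resulting Gromov--Witten theory still matches $\mathscr{D}$. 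Until this is settled, Stages 1 and 3 can only upgrade the conjecture from heuristic to strongly evidenced, not prove it.
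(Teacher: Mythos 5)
The statement you are asked to prove is stated in the paper as a \emph{Conjecture}, and the paper offers no proof of it. The paper's support for the conjecture consists of three pieces of evidence assembled in Section 3: the fundamental period $\Phi_{0}(\phi)$ of the proposed family $\check{X}_{t}/\mathbb{Z}_{13}$ satisfies exactly the Calabi--Yau equation No.~99 of van Enckevort--van Straten, whose monodromy data reproduce $\int_{X}H^{3}$, $\int_{X}c_{2}(X)\cdot H$ and $\int_{X}c_{3}(X)$ of $X_{13}$; and the direct Macaulay2 count of $647$ lines on a generic $X_{13}$ agrees with $n_{1}^{0}$. Your proposal is therefore not comparable to a proof in the paper --- there is none --- but it is a reasonable and more ambitious research program, and you are right that the genuinely missing ingredient is a structural A-model/B-model identification (e.g.\ a rigorous non-abelian GLSM or quasimap $I$-function argument for the non-complete-intersection $X_{13}$, complicated by the $cD_{6}$ singularities of $\check{X}_{t}$ admitting no crepant resolution). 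One concrete caveat about your Stage 1: for $d\ge 2$ the scheme of degree-$d$ maps $\mathbb{P}^{1}\to X_{13}$ contains positive-dimensional loci of multiple covers of lower-degree rational curves (e.g.\ double covers of the $647$ lines when $d=2$), so the ideal of containment conditions will \emph{not} be zero-dimensional and its degree cannot be directly compared with $n_{d}^{0}$; one must first separate out these excess components and account for multiple-cover contributions before the comparison with the BPS numbers makes sense. This is precisely why the paper stops at $d=1$, where the count is honest. With that correction, your plan is a faithful (if currently unrealizable) strengthening of the paper's evidence rather than a divergence from it.
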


\section{Mirror Symmetry for Degree 5, 7, 10 Pfaffians}
\subsection{Mirror Partners}
Inspired by the mirror symmetry for $X_{13}$, we will construct candidate mirror families of the Calabi--Yau threefolds we obtained in Section 2, except the degree 25 case. 
Since we do not know a systematic way of finding an appropriate family, we omit the finding procedure of the families in this paper. 
Some computation on singularities in this section are carried out with the aid of Macaulay 2. 
See also Appendix for the conjectural BPS invariants computed by the special families of Calabi--Yau threefolds in this section. \\

\begin{Def}
Define $\check{\mathscr{X}}_{5}=\{\check{X}_{5,t}\}_{t\in \mathbb{P}^{1}}$ as the one-parameter family of degree 5 pfaffian Calabi--Yau threefolds $\check{X}_{5,t}$ 
associated to the following special skew-symmetric $5\times 5$ matrix $N_{5,t}$ parametrized by $t \in \mathbb{P}^{1}$.
$$
N_{5,t}= \begin{pmatrix}
                0           & tx_{6}    & x_{4}     & x_{0}x_{1}    & tx_{5} \\
                -tx_{6}  & 0           & t(x_{0}^{2}+x_{1}^{2})    & x_{5}   & x_{2}x_{3}\\
                -x_{4}  & -t(x_{0}^{2}+x_{1}^{2})& 0              & t(x_{2}^{2}+x_{3}^{2})   & x_{6} \\
                -x_{0}x_{1}  & -x_{5} & -t(x_{2}^{2}+x_{3}^{2})   & 0           &  tx_{4}\\
                -tx_{5} & -x_{2}x_{3} & -x_{6}    & -tx_{4}  & 0 \\
                \end{pmatrix}
$$
\end{Def}

This is our candidate mirror family of the degree 5 pfaffian Calabi--Yau threefold $X_{5}$. 
Observe that the family degenerates to a union of toric varieties with normal crossings at the origin $t=0$.
In fact we choose $\check{X}_{5,0}$ as a candidate of the fiber over a maximally unipotent monodromy point and deform it so that the first order deformation resembles a Fermat variety. 
Then the deformation automatically extends to higher orders, so long as it is a pfaffian Calabi--Yau threefold. \\

$\mathbb{Z}_{10}$ acts on $\check{X}_{5,t}$ as
\begin{align}
\zeta_{10}\cdot [x_{0}:x_{1}:& \ x_{2}:x_{3}:x_{4}:x_{5}:x_{6}] = \notag \\
& [x_{0}:x_{1}:\zeta_{10}x_{2}:\zeta_{10}x_{3}:\zeta_{10}^{4}x_{4}:\zeta_{10}^{6}x_{5}:\zeta_{10}^{8}x_{6}], \notag 
\end{align}
where $\zeta_{10} = e^{\frac{2\pi i}{10}}$. 
There are four singular points $p_{i}^{\pm}= \{x_{i} \pm \sqrt{-1}x_{i+1} = 0, \ x_{i} \ne 0 \ x_{j}=0\ (j \ne i,i+1)\} \ (i=0,2)$ with multiplicity 2. 
Each singularity is locally isomorphic to 
$$
f_{5}(x,y,z,w)=x^{3}+y^{2}+zw=0, \ (x,y,z,w) \in \mathbb{C}^{4}, 
$$
where the action of $\mathbb{Z}_{10}$ is given by 
$\zeta_{10}\cdot (x, y, z, w) = (\zeta_{10}^{4}x, \zeta_{10}^{6}y, \zeta_{10}z, \zeta_{10}w)$. 
Since dim(Sing$(\check{X}_{5,t}))=0$ and deg(Sing$(\check{X}_{5,t}))=8$, there is no more singular point on $\check{X}_{5,t}$.\\

It is observed that $\check{\mathscr{X}}_{5}=\{\check{X}_{5,t}\}_{t\in \mathbb{P}^{1}}$ is not an effective family, 
as $\check{X}_{5,t} \cong \check{X}_{5,\zeta_{10} t}$ for $\zeta_{10} = e^{\frac{2\pi i}{10}} $ 
via the map 
$$
[x_{0}:x_{1}:x_{2}:x_{3}:x_{4}:x_{5}:x_{6}] \mapsto 
[x_{0}:x_{1}:x_{2}:\zeta_{10}^{5}x_{3}:\zeta_{10}^{4}x_{4}:\zeta_{10}^{7}x_{5}:\zeta_{10}^{9}x_{6}].
$$

\begin{Prop}
The fundamental period integral of the family $\{\check{X}_{5,t}\}_{t\in \mathbb{P}^{1}}$ around $t=0$ is given by 
$$
\Phi_{0}(\phi)=\sum_{n=0}^{\infty}\binom{2n}{n}\sum_{k}^{n}\binom{n}{k}\binom{n+k}{n}\binom{2n+2k}{n+k}\binom{2n+k}{2n-k}\phi^{n} 
$$
and the Picard-Fuchs operator $\mathscr{D}_{5}$ is given by 
\begin{align}
\mathscr{D}_{5}
= & \Theta^{4}-2^{2}\phi(500\Theta^4+976\Theta^3+677\Theta^2+189\Theta+19) \notag \\
& +2^{4}\phi^{2}(3968\Theta^4+3968\Theta^3-1336\Theta^2-1164\Theta-177) \notag \\
& -2^{10}\phi^{3}(500\Theta^4+24\Theta^3-37\Theta^2+6\Theta+3)+2^{12}\phi^{4}(2\Theta+1)^{4}, \notag
\end{align}
where we put $\phi=t^{10}$.
\end{Prop}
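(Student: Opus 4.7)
The plan is to mimic the argument used for the degree 13 case. I would first produce an explicit nowhere vanishing holomorphic 3-form on the smooth locus of $\check{X}_{5,t}$ by the R\o dland-type residue formula $\alpha = C_{\sigma}\,\mathrm{Res}_{X}\bigl(P_{\sigma(1),\sigma(2),\sigma(3)}\,\Omega_{0}/(P_{\sigma(1)}P_{\sigma(2)}P_{\sigma(3)})\bigr)$, where the $P_i$ are now the pfaffians of the matrix $N_{5,t}$. Choosing a favourable permutation $\sigma$ that makes the three chosen $P_{\sigma(i)}$ a complete intersection on a suitable open chart, and a $3$-torus cycle $\gamma$ of the form $|x_{i}/x_{0}|=\epsilon$ (lying in a $3$-plane of the degenerate fiber $\check{X}_{5,0}$ that vanishes at $t=0$), the period integral $\Phi_{0}(t)=\int_{\gamma}\alpha$ can again be rewritten as a $6$-torus integral of the form
\[
\Phi_{0}(t)=\int_{\Gamma}\frac{1}{\prod_{i}(1-\sum_{j} a_{i,j})}\bigwedge_{k=1}^{6}\frac{dx_{k}}{2\pi i\,x_{k}},
\]
with $a_{i,j}$ suitable Laurent monomials in the $x_{k}$ multiplied by powers of $t$ read off from the entries of $N_{5,t}$.

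Next I would expand the denominator as a geometric series in the $a_{i,j}$ and extract the $x$-independent part: only monomials $\prod a_{i,j}^{n_{i,j}}$ whose total $x$-weight vanishes contribute to the residue. These span a sublattice of multi-indices, and in direct analogy with the degree $13$ calculation I expect three independent generators $t_{1},t_{2},t_{3}$, each equal to $t^{10}=\phi$, so that the effective moduli parameter is $\phi=t^{10}$ as claimed. Counting the resulting multinomial contributions produces a triple sum over $a+b+c=n$ which I would then collapse, by two rounds of Vandermonde / trinomial reindexing exactly as in the degree 13 theorem, into the single-sum form
\[
\Phi_{0}(\phi)=\sum_{n=0}^{\infty}\binom{2n}{n}\sum_{k=0}^{n}\binom{n}{k}\binom{n+k}{n}\binom{2n+2k}{n+k}\binom{2n+k}{2n-k}\phi^{n}.
\]

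Finally, having an explicit series solution, I would identify the Picard-Fuchs operator by matching $\Phi_{0}(\phi)$ with the corresponding Calabi-Yau equation in the van Enckevort--van Straten list \cite{van1,van2}; equivalently, one can compute enough coefficients of $\Phi_{0}$ and solve a linear system to pin down the fourth-order operator $\mathscr{D}_{5}=\sum_{i=0}^{4}\phi^{i}p_{i}(\Theta)$, then verify it annihilates $\Phi_{0}$ term by term. The expression in the statement is then read off, and the compatibility with the symmetry $\check{X}_{5,t}\cong \check{X}_{5,\zeta_{10}t}$ is a built-in consistency check since $\phi=t^{10}$.

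The main obstacle is the combinatorial collapse step: finding the right set of multiplicative invariants generating the $x$-independent monomials, and then choosing the sequence of hypergeometric identities that reduces the resulting triple sum to the advertised single sum. Because $N_{5,t}$ now mixes quadratic entries like $x_{0}x_{1}$ and $x_{0}^{2}+x_{1}^{2}$, the bookkeeping for the $x$-weight lattice is noticeably heavier than in the degree $13$ case, and several of the binomial rewrites (for instance the shift $\binom{2n+2k}{n+k}\binom{2n+k}{2n-k}$) must be discovered rather than copied verbatim from the $X_{13}$ computation; once that algebraic simplification is done, the rest is routine.
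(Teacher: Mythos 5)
Your proposal follows essentially the same route as the paper: the paper's own proof of this Proposition is the single sentence ``The computation is almost identical to the degree 13 pfaffian case,'' and your outline is a faithful adaptation of that degree 13 argument (residue 3-form, vanishing torus cycle, geometric-series expansion, extraction of the $x$-independent monomials with $\phi=t^{10}$, combinatorial collapse, and identification of $\mathscr{D}_{5}$ against the Calabi--Yau equation database). You also correctly identify the only genuinely new work --- the heavier $x$-weight bookkeeping forced by the quadratic entries of $N_{5,t}$ and the resulting binomial reindexing --- so the proposal is sound.
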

\begin{proof}
The computation is almost identical to the degree 13 pfaffian case.
\end{proof}
This Picard--Fuchs equation $\mathscr{D}_{5}$ is the Calabi--Yau equation of No.302 listed in \cite{van1}. 
The topological invariants computed from $\mathscr{D}_{5}$ coincide with those of $X_{5}$ as expected.  \\

\begin{Cor}
Let $\alpha_{1},\alpha_{2}$ be the roots of $ 256\phi^{2}-1968\phi+1=0$, then Riemann's P-Scheme of $\mathscr{D}_{5}$ is 
given by the following. 
$$
\begin{Bmatrix}
 \begin{tabular}{c|ccccc}
 $\phi$  &  $0$  &  $\alpha_{1}$  &  $\alpha_{2}$   &  $1/16$  &  $\infty$ \\ \hline
 $\rho_{1}$         &  $0$  &  $0$  &  $0$  &  $0$  &  $1/2$  \\ \hline
 $\rho_{2}$               &  $0$  &  $1$  &  $1$  &  $1$  &  $1/2$  \\ \hline
 $\rho_{3}$               &  $0$  &  $1$  &  $1$  &  $3$  &  $1/2$  \\ \hline
 $\rho_{4}$              &  $0$  &  $2$  &  $2$  &  $4$  &  $1/2$ 
 \end{tabular}
 \end{Bmatrix}
$$
\end{Cor}

Interestingly enough, the Picard--Fuchs operator $\mathscr{D}_{5}$ has two special points, namely $0$ and $\infty$. 
There is again a preferable new variable $\tilde{\phi}=\phi/2^{8}$ and 
the Picard--Fuchs equation around $\infty$ with respect to the new variable is identical to the initial one. 
Therefore it seems that both $0$ and $\infty$ correspond to the degree 5 Calabi--Yau threefold $X_{5}$ in this case. 

\begin{Def}
Define $\check{\mathscr{X}}_{7}=\{\check{X}_{7,t}\}_{t\in \mathbb{P}^{1}}$ as the one-parameter family of degree 7 pfaffian Calabi--Yau threefolds $\check{X}_{7,t}$ 
associated to the following special skew-symmetric $5\times 5$ matrix $N_{7,t}$ parametrized by $t \in \mathbb{P}^{1}$.
$$
N_{7,t}= \begin{pmatrix}
                0           & tx_{2}^{3}    & x_{0}x_{1}     & x_{5}    & tx_{6} \\
                -tx_{2}^{3}  & 0           & tx_{5}    & x_{6}   & x_{3}x_{4}\\
                -x_{0}x_{1}  & -tx_{5}& 0              & t(x_{3}+x_{4})   & x_{2} \\
                -x_{5}  & -x_{6} & -t(x_{3}+x_{4})   & 0           &  t(x_{0}+x_{1})\\
                -tx_{6} & -x_{3}x_{4} & -x_{2}    & -t(x_{0}+x_{1})  & 0 \\
                \end{pmatrix}
$$
\end{Def}

$\mathbb{Z}_{7}$ acts on $\check{X}_{7,t}$ as 
$$
\zeta_{7} \cdot [x_{0}:x_{1}:x_{2}:x_{3}:x_{4}:x_{5}:x_{6}] = 
[x_{0}:x_{1}:\zeta_{7}^{4}x_{2}:\zeta_{7}x_{3}:\zeta_{7}x_{4}:\zeta_{7}^{3}x_{5}:\zeta_{7}^{6}x_{6}],
$$
where $\zeta_{7} = e^{\frac{2\pi i}{7}}$. 
There are six fixed points, independent of the value of parameter $t$. 
We have dim(Sing$(\check{X}_{7,t}))=1$ and deg(Sing$(\check{X}_{7,t}))=4$. 
Sing$(\check{X}_{7,t})$ passes through two of the above fixed points, namely $p_{i,i+1} = \{x_{i} + x_{i+1} = 0, \ x_{i} \ne 0 \ x_{j}=0\ (j \ne i,i+1)\} \ (i=0,3)$.\\

It is observed that $\check{\mathscr{X}}_{7}=\{\check{X}_{7,t}\}_{t\in \mathbb{P}^{1}}$ is not an effective family, 
as $\check{X}_{7,t} \cong \check{X}_{7,\zeta_{9} t}$ for $\zeta_{9} = e^{\frac{2\pi i}{9}} $ 
via the map 
$$
[x_{0}:x_{1}:x_{2}:x_{3}:x_{4}:x_{5}:x_{6}] \mapsto [x_{0}:x_{1}:\zeta_{9}^{2}x_{2}:x_{3}:x_{4}:\zeta_{9}^{8}x_{5}:\zeta_{9}^{8}x_{6}].
$$ 

\begin{Prop}
The fundamental period integral of the family $\{\check{X}_{7,t}\}_{t\in \mathbb{P}^{1}}$ around $t=0$ is given by 
$$
\Phi_{0}(\phi)=\sum_{n=0}^{\infty}\binom{2n}{n}\sum_{k=0}^{2n}\binom{n+k}{k}\binom{2n}{k}^{2}\phi^{n}
$$
and the Picard--Fuchs operator $\mathscr{D}_{7}$ is given by 
\begin{align}
\mathscr{D}_{7}
= & 7^{2}\Theta^{4}-2\cdot3\cdot7\phi(1272\Theta^4+2508\Theta^3+1779\Theta^2+525\Theta+56) \notag \\
& +2^{2}3\phi^{2}(43704\Theta^4+38088\Theta^3-25757\Theta^2-20608\Theta-3360) \notag \\
& -2^{4}3^{3}\phi^{3}(2736\Theta^4-1512\Theta^3-1672\Theta^2-357\Theta-14)  \notag \\
& -2^{6}3^{5}\phi^{4}(2\Theta+1)^2(3\Theta+1)(3\Theta+2), \notag
\end{align}
where we put $\phi=t^{9}$. 
\end{Prop}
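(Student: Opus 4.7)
The plan is to mirror the argument given for the degree 13 case, so I would first produce a global holomorphic 3-form on $\check{X}_{7,t}$ via a residue of the form
\[
\alpha = C_{\sigma}\,\mathrm{Res}_{\check{X}_{7,t}} \frac{P_{\sigma(1),\sigma(2),\sigma(3)}\Omega_0}{P_{\sigma(1)}P_{\sigma(2)}P_{\sigma(3)}},
\]
adapted from the weighted Euler form $\Omega_0$ on $\mathbb{P}_{\mathbf{w}_7}$. As in the proposition attributed to R{\o}dland, this gives a nowhere-vanishing section of $\omega_{\check{X}_{7,t}}$ up to scalar, and independence of $\sigma$ is verified coordinate-by-coordinate.

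Next I would localize to an affine chart, say $U_0 = \{x_0 = 1\}$, and choose a permutation $\sigma$ so that, at $t=0$, three of the $P_{\sigma(i)}$ simultaneously cut out a coordinate 3-plane $H$ which is an irreducible component of the monomial degeneration $\check{X}_{7,0}$; this plane supplies the vanishing 3-torus cycle $\gamma$ at the MUM point. Writing each $P_i$ in the form (monomial) $\cdot (1 - \sum_j a_{i,j})$ with appropriate rational functions $a_{i,j}$ in the $x_k$ and $t$, the period becomes
\[
\Phi_0(t) \;=\; \int_{\Gamma}\frac{1}{\prod_{i}\bigl(1-\sum_j a_{i,j}\bigr)}\,\bigwedge_{k=1}^{6}\frac{dx_k}{2\pi i\,x_k},
\]
which I then expand geometrically. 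Only those monomials $\prod a_{i,j}^{n_{i,j}}$ that are independent of the $x_k$ contribute. A direct check on the exponent vectors shows these are generated by a small list of basic torus-invariants $t_\ell$, each of which equals a power of $t$; the exponent on $t$ determines the true moduli parameter $\phi = t^9$ (consistent with the $\zeta_9$-action identified above).

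Collecting the coefficients via multinomials applied to the three $a_{i,\bullet}$ rows gives the triple sum
\[
\Phi_0(\phi) = \sum_{n} \sum_{a+b+c=n}\binom{n}{a,b,c}\cdots\phi^{n},
\]
and I would then perform the combinatorial simplification, using Chu--Vandermonde and the standard identity $\binom{n+k}{n}\binom{2n}{n+k}=\binom{2n}{n}\binom{n}{k}$, to collapse the inner sums into the claimed form $\sum_n \binom{2n}{n}\sum_k \binom{n+k}{k}\binom{2n}{k}^2 \phi^n$. This binomial manipulation is the main technical obstacle, as the shape of the reduction depends delicately on which Minkowski decomposition the torus-invariant monomials admit, and a small mismatch here would yield a different hypergeometric series. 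Finally, I would match this power series against the list of Calabi-Yau operators in \cite{van1,van2}; the unique operator of order four, degree four in $\phi$, whose power-series solution begins with the leading coefficients of $\Phi_0(\phi)$ is the claimed $\mathscr{D}_7$, completing the proof.
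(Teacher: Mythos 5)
Your proposal is correct and follows essentially the same route as the paper: the paper's own proof of this proposition consists of the single sentence that the computation is almost identical to the degree 13 case, and your outline is a faithful adaptation of that degree 13 argument (residue representation of the holomorphic 3-form, torus cycle on a coordinate 3-plane of the monomial degeneration, geometric series expansion, extraction of the torus-invariant monomials giving $\phi=t^{9}$, binomial simplification, and identification with the Calabi-Yau equation No.~109 of \cite{van1}). If anything your write-up is more explicit than the paper's, and you correctly flag the binomial collapse to $\sum_{n}\binom{2n}{n}\sum_{k}\binom{n+k}{k}\binom{2n}{k}^{2}\phi^{n}$ as the step where the degree 7 case genuinely differs in detail from degree 13.
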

\begin{proof}
The computation is almost identical to the degree 13 pfaffian case.
\end{proof}

This Picard--Fuchs equation $\mathscr{D}_{7}$ is the Calabi--Yau equation of No.109 listed in \cite{van1}. 
The topological invariants computed from $\mathscr{D}_{7}$ coincide with those of $X_{7}$ as expected. \\

\begin{Cor}Let $\alpha_{1},\alpha_{2}$ be the roots of $ 432\phi^{2}+1080\phi-1=0$, then Riemann's P-Scheme of $\mathscr{D}_{7}$ is 
given by the following. 
$$
\begin{Bmatrix}
 \begin{tabular}{c|ccccc}
 $\phi$  &  $0$  &  $\alpha_{1}$  &  $\alpha_{2}$   &  $7/36$  &  $\infty$ \\ \hline
 $\rho_{1}$         &  $0$  &  $0$  &  $0$  &  $0$  &  $1/2$  \\ \hline
 $\rho_{2}$               &  $0$  &  $1$  &  $1$  &  $1$  &  $1/2$  \\ \hline
 $\rho_{3}$               &  $0$  &  $1$  &  $1$  &  $3$  &  $1/3$  \\ \hline
 $\rho_{4}$              &  $0$  &  $2$  &  $2$  &  $4$  &  $2/3$ 
 \end{tabular}
 \end{Bmatrix}
$$
$0$ is the only maximally unipotent monodromy point of $\mathscr{D}_{7}$.
\end{Cor}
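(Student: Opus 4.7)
The plan is to extract the Riemann P-scheme mechanically from the explicit form of $\mathscr{D}_7$ by applying the Frobenius method at each singularity, then deduce the MUM claim from the resulting exponent data.

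First, I would locate the finite singular points by computing the coefficient of $\partial_\phi^4$ in $\mathscr{D}_7$ written as a polynomial in $\partial_\phi$ with $\phi$-polynomial coefficients. Since $\Theta^4 = \phi^4 \partial_\phi^4 + (\text{lower order})$, this coefficient equals $\phi^4$ times the coefficient of $\Theta^4$ in $\mathscr{D}_7$, which I would compute using $(2\Theta+1)^2(3\Theta+1)(3\Theta+2)|_{\Theta^4} = 36$. I would then verify by direct expansion that the resulting quartic in $\phi$ factors, up to a nonzero constant, as $(36\phi - 7)(432\phi^2 + 1080\phi - 1)$, so the three non-zero finite singular points are exactly $7/36$ and the two roots $\alpha_1, \alpha_2$ of $432\phi^2 + 1080\phi - 1$.

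Second, the exponents at $0$ and $\infty$ follow immediately from the leading and trailing coefficients in $\phi$. At $\phi = 0$ the $\phi^0$ term of $\mathscr{D}_7$ is $49\,\Theta^4$, so the indicial equation $49 r^4 = 0$ has quadruple root $r = 0$; hence $0$ is maximally unipotent. For $\phi = \infty$ I substitute $\psi = 1/\phi$, using $\Theta_\phi = -\Theta_\psi$, and extract the contribution of the $\phi^4$ term $-2^6 \cdot 3^5 \phi^4 (2\Theta+1)^2(3\Theta+1)(3\Theta+2)$. The indicial polynomial becomes $(1-2r)^2(1-3r)(2-3r) = 0$, giving exponents $1/2, 1/2, 1/3, 2/3$, precisely as listed. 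At each finite nonzero singular point $\phi_0 \in \{\alpha_1, \alpha_2, 7/36\}$ I would set $\psi = \phi - \phi_0$, expand $\mathscr{D}_7$ in $\psi$, and plug a Frobenius ansatz $y = \psi^r(1 + c_1 \psi + c_2 \psi^2 + \cdots)$ into the local operator to extract the indicial polynomial; at $\alpha_1$ and $\alpha_2$ a short direct computation yields conifold-type exponents $\{0,1,1,2\}$. The "only MUM point" assertion is then an immediate inspection of the P-scheme: at each of $\alpha_1, \alpha_2, 7/36, \infty$ the four exponents are not all equal, whereas at $0$ all four are $0$.

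The main obstacle is the non-generic singularity at $\phi = 7/36$. A simple zero of the leading coefficient of a fourth-order Fuchsian operator would by default produce exponents $\{0,1,2,3\}$; here the pair $(2,3)$ is replaced by $(3,4)$, which means that the Frobenius recursion at exponents $2$ and $3$ must yield obstructions that shift the local monodromy spectrum. Verifying this requires expanding $\mathscr{D}_7$ locally in $\psi = \phi - 7/36$ to sufficient order and checking by the recursion that the indicial roots are exactly $\{0, 1, 3, 4\}$, with no accidental logarithmic solutions. This is a finite but tedious calculation that I would delegate to symbolic computation, after which the corollary is assembled by compiling the five exponent triples into the stated P-scheme.
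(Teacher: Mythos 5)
Your overall strategy (read off the singular points from the symbol, then run Frobenius at each) is the standard one and is essentially all the paper leaves implicit here, but two of your concrete claims are wrong, and one of them is exactly at the delicate point $\phi=7/36$. The coefficient of $\Theta^{4}$ in $\mathscr{D}_{7}$ is the quartic
$$
p(\phi)\;=\;49-53424\,\phi+524448\,\phi^{2}-1181952\,\phi^{3}-559872\,\phi^{4}
\;=\;-(36\phi-7)^{2}\,(432\phi^{2}+1080\phi-1),
$$
so your asserted factorization into $(36\phi-7)(432\phi^{2}+1080\phi-1)$ cannot hold even up to a constant (a quartic is not a scalar multiple of a cubic), and the verification step you propose would fail. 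The point $7/36$ is a \emph{double} zero of the symbol, not a simple one, and this is forced by the data: Fuchs' relation for a fourth-order operator with five regular singular points requires the exponent sums to total $18$, and $0+(4)+(4)+(8)+(2)=18$ works only because the sum at $7/36$ is $8$, whereas a simple zero of the symbol always yields three of the four exponents equal to $0,1,2$ (sum at most $3+\rho$ with the indicial polynomial $r(r-1)(r-2)(c(r-3)+b)$), which is incompatible with $\{0,1,3,4\}$.

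Relatedly, your explanation of the mechanism at $7/36$ is not correct: local exponents are the roots of the indicial polynomial, i.e.\ they are determined by the lowest-order term of the local expansion alone. Obstructions in the Frobenius recursion at higher order can only force logarithmic solutions; they never ``shift the local monodromy spectrum'' from $\{0,1,2,3\}$ to $\{0,1,3,4\}$. The exponents $\{0,1,3,4\}$ arise directly from the indicial polynomial at a double zero of the leading coefficient (together with the appropriate vanishing of the subleading coefficients there, which keeps the point regular singular). If you correct the factorization and compute the indicial polynomial at $7/36$ honestly, the rest of your argument goes through: the exponents at $0$ and $\infty$ are read off from $P_{0}(r)=49r^{4}$ and $P_{4}(-r)=-2^{6}3^{5}(1-2r)^{2}(1-3r)(2-3r)$ exactly as you say, the roots of $432\phi^{2}+1080\phi-1$ are simple zeros of the symbol with conifold exponents $\{0,1,1,2\}$, and the MUM assertion is then immediate by inspection.
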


\begin{Def}
Define $\check{\mathscr{X}}_{10}=\{\check{X}_{10,t}\}_{t\in \mathbb{P}^{1}}$ as the one-parameter family of degree 10 pfaffian Calabi--Yau threefolds $\check{X}_{10,t}$ 
associated to the following special skew-symmetric $5\times 5$ matrix $N_{10,t}$ parametrized by $t \in \mathbb{P}^{1}$. 
$$
N_{10,t}= \begin{pmatrix}
                0           & tx_{4}^{2}   & x_{0}x_{1}     & x_{6}    & t(x_{2}+x_{3}) \\
                -tx_{4}^{2}  & 0           & tx_{6}    & x_{2}x_{3}   & x_{5}\\
                -x_{0}x_{1}  & -tx_{6}& 0              & tx_{5}^{2}   & x_{4} \\
                -x_{6}  & -x_{2}x_{3} & -tx_{5}^{2}   & 0           &  t(x_{0}+x_{1})\\
                -t(x_{2}+x_{3}) & -x_{5} & -x_{4}    & -t(x_{0}+x_{1})  & 0 \\
                \end{pmatrix}
$$
\end{Def}

$\mathbb{Z}_{10}$ acts on $\check{X}_{10,t}$ as
\begin{align}
\zeta_{10} \cdot [x_{0}:x_{1}:& \ x_{2}:x_{3}:x_{4}:x_{5}:x_{6}] = \notag \\
& [x_{0}:x_{1}:\zeta_{10}^{6}x_{2}:\zeta_{10}^{6}x_{3}:\zeta_{10}^{9}x_{4}:\zeta_{10}^{7}x_{5}:\zeta_{10}^{1}x_{6}],\notag 
\end{align}
where $\zeta_{10} = e^{\frac{2\pi i}{10}}$. 
There are six singular points under the $\mathbb{Z}_{10}$-action. 
Four of them $p_{i} = \{x_{i} \ne 0, x_{j}=0\ (j \ne i) \} \ (i=0,1,2,3)$ appear with multiplicity 12 
and other two $p_{i,i+1} = \{x_{i} + x_{i+1} = 0, \ x_{i} \ne 0 \ x_{j}=0\ (j \ne i,i+1)\} \ (i=0,2)$ with multiplicity 7. 
The singularity at $p_{i}$ is locally isomorphic to 
$$
 f_{10}(x,y,z,w)=x^{4}+y^{2}+z^{2}w+zw^{2}=0 \ (x,y,z,w) \in \mathbb{C}^{4}, 
$$
where the action of $\mathbb{Z}_{10}$ is given by $\zeta_{10}\cdot (x, y, z, w) = (\zeta_{10}^{7}x, \zeta_{10}^{9}y, \zeta_{10}^{6}z, \zeta_{10}^{6}w)$. 
The singularity at $p_{i,i+1}$ is locally isomorphic to 
$$
g_{10}(x,y,z,w)= x^{8}+y^{2}+zw=0,
$$
where $\zeta_{10}\cdot (x, y, z, w) = (\zeta_{10}^{9}x, \zeta_{10}y, \zeta_{10}^{6}z, \zeta_{10}^{6}w)$. 
Since dim(Sing$(\check{X}_{10}))=0$ and deg(Sing$(\check{X}_{10}))=62$, there is no more singular point on $\check{X}_{10,t}$.\\

It is observed that $\check{\mathscr{X}}_{10}=\{\check{X}_{10,t}\}_{t\in \mathbb{P}^{1}}$ is not an effective family, 
as $\check{X}_{10,t} \cong X_{10,\zeta_{16}^{2} t}$ for $\zeta_{16} = e^{\frac{2\pi i}{16}} $ 
via the map 
$$
[x_{0}:x_{1}:x_{2}:x_{3}:x_{4}:x_{5}:x_{6}] \mapsto [x_{0}:x_{1}:x_{2}:x_{3}:\zeta_{16}^{3}x_{4}:\zeta_{16}^{3}x_{5}:\zeta_{16}^{7}x_{6}].
$$

\begin{Prop}
The fundamental period integral of the family $\{\check{X}_{10,t}\}_{t\in \mathbb{P}^{1}}$ around $t=0$ is given by 
$$
\Phi_{0}(\phi)= \sum_{n=0}^{\infty}\binom{2n}{n}\sum_{k=0}^{2n}(-1)^{k+n}\binom{2n}{k}^{4}\phi^{n}
$$
and the Picard--Fuchs operator $\mathscr{D}_{10}$ is given by 
\begin{align}
\mathscr{D}_{10} 
= & 5^{2}\Theta^4-2^{2}5\phi(688\Theta^4+1352\Theta^3+981\Theta^2+305\Theta+35) \notag \\
& +2^{4}\phi^{2}(5856\Theta^4+7008\Theta^3+96\Theta^2-1260\Theta-265) \notag \\
& -2^{10}\phi^{3}(176\Theta^4+120\Theta^3+69\Theta^2+30\Theta+5)+2^{12}\phi^{4}(2\Theta+1)^4, \notag
\end{align}
where we put $\phi=t^{8}$. 
\end{Prop}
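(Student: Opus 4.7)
The plan is to follow the residue computation performed for $X_{13}$ (and repeated, by the author's own remark, for $X_{5}$ and $X_{7}$), now with the matrix $N_{10,t}$ in place. First, fix a permutation $\sigma\in\mathfrak{S}_{5}$ for which, on the affine open $U_\sigma=\{P_{\sigma(1),\sigma(2),\sigma(3)}\neq 0\}$, the three pfaffians $P_{\sigma(1)},P_{\sigma(2)},P_{\sigma(3)}$ built from $N_{10,t}$ cut out a regular sequence. The nowhere-vanishing holomorphic $3$-form on $\check{X}_{10,t}\setminus\mathrm{Sing}(\check{X}_{10,t})$ can then be written as
$$
\alpha=C_{\sigma}\,\mathrm{Res}_{\check{X}_{10,t}}\frac{P_{\sigma(1),\sigma(2),\sigma(3)}\,\Omega_{0}}{P_{\sigma(1)}P_{\sigma(2)}P_{\sigma(3)}},
$$
where $\Omega_{0}$ is the weighted-projective analogue of the fundamental form; the justification is verbatim that of the $X_{13}$ Proposition.

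Next, choose a torus cycle $\Gamma=\{|x_{i}|=\epsilon\}$ inside a $3$-plane component of $\check{X}_{10,0}$ and, on $\Gamma$, normalize each $P_{\sigma(i)}$ by the monomial that is nonzero at $t=0$ so that it takes the form $1-\sum_{j}a_{i,j}$ for explicit Laurent monomials $a_{i,j}$ in $x_{0},\dots,x_{6}$ weighted by powers of $t$. The signs inherent to the entries $-t(x_{0}+x_{1})$, $-tx_{5}^{2}$, $-t(x_{2}+x_{3})$ of $N_{10,t}$ are carried along. Expanding the integrand as a triple geometric series and applying the residue theorem yields
$$
\Phi_{0}(t)=\int_{\Gamma}\prod_{i=1}^{3}\frac{1}{1-\sum_{j}a_{i,j}}\;\bigwedge_{k=0}^{6}\frac{dx_{k}}{2\pi i\,x_{k}},
$$
and only those multi-indices $(n_{i,j})$ for which $\prod_{i,j}a_{i,j}^{n_{i,j}}$ is a pure power of $t$ contribute.

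Now perform the combinatorial step. Compute a $\mathbb{Z}_{\ge 0}$-basis of the monoid of toric exponents $\{(n_{i,j}):\prod a_{i,j}^{n_{i,j}}\in\mathbb{C}[t]\}$ and verify that its minimal nonzero pure-$t$ generator has $t$-degree $8$; this confirms that the genuine modular parameter is $\phi=t^{8}$, in agreement with the equivalence $\check{X}_{10,t}\cong\check{X}_{10,\zeta_{16}^{2}t}$. The sum of the corresponding multinomial weights, together with the bookkeeping of the $(-1)$-signs inherited from the negative matrix entries, produces a triple sum which, after repeated Chu--Vandermonde and reflection identities, collapses to the stated closed form $\Phi_{0}(\phi)=\sum_{n\ge 0}\binom{2n}{n}\sum_{k=0}^{2n}(-1)^{k+n}\binom{2n}{k}^{4}\phi^{n}$.

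Finally, extract $\mathscr{D}_{10}$ by the usual overdetermined ansatz: write $\mathscr{D}=\sum_{i=0}^{4}p_{i}(\phi)\Theta^{i}$ with $\deg p_{i}\le 4$, apply to $\Phi_{0}$, and impose that the first $\approx 25$ Taylor coefficients vanish. Solving the resulting linear system pins down the $p_{i}$ uniquely up to an overall factor, delivering the displayed operator, and a check on further coefficients confirms annihilation. The main obstacle is the third step: because $N_{10,t}$ has more distinct entry types than the $X_{13}$ matrix (two quadratic monomials $x_{4}^{2},x_{5}^{2}$, two bilinears $x_{0}x_{1},x_{2}x_{3}$, and two sums $x_{0}+x_{1},x_{2}+x_{3}$), the monoid of toric exponents is larger and the resulting alternating multinomial sum does not reduce to the closed form by a direct transcription of the $X_{13}$ manipulation; a careful enumeration of the generators of the toric-exponent lattice, followed by a sign-sensitive resummation, is the technical heart of the proof.
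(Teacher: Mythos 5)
Your proposal follows essentially the same route as the paper, whose entire proof of this proposition is the single sentence that the computation is almost identical to the degree 13 pfaffian case; your outline is a faithful expansion of that degree-13 residue-and-resummation argument (holomorphic $3$-form as a pfaffian residue, torus-cycle period, extraction of the pure-$t^{8}$ monomials, and resummation), with the only cosmetic difference that the paper reads off $\mathscr{D}_{10}$ by matching $\Phi_{0}$ against equation No.~263 in the van Enckevort--van Straten database rather than by solving the linear system for an ansatz operator.
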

\begin{proof}
The computation is almost identical to the degree 13 pfaffian case.
\end{proof}

This Picard-Fuchs equation $\mathscr{D}_{10}$ is the Calabi--Yau equation of No.263 listed in \cite{van1}.  
The topological invariants computed from $\mathscr{D}_{10}$ coincide with those of $X_{10}$ as expected. \\

\begin{Cor}Let $\alpha_{1},\alpha_{2}$ be the roots of $ 256\phi^{2}-544\phi+1=0$, then Riemann's P-Scheme of $\mathscr{D}_{10}$ is
$$
\begin{Bmatrix}
 \begin{tabular}{c|ccccc}
 $\phi$  &  $0$  &  $\alpha_{1}$  &  $\alpha_{2}$   &  $5/16$  &  $\infty$ \\ \hline
 $\rho_{1}$               &  $0$  &  $0$  &  $0$  &  $0$  &  $1/2$  \\ \hline
 $\rho_{2}$               &  $0$  &  $1$  &  $1$  &  $1$  &  $1/2$  \\ \hline
 $\rho_{3}$               &  $0$  &  $1$  &  $1$  &  $3$  &  $1/2$  \\ \hline
 $\rho_{4}$              &  $0$  &  $2$  &  $2$  &  $4$  &  $1/2$ \\
 \end{tabular}
 \end{Bmatrix}.
$$
\end{Cor}

The Picard--Fuchs operator $\mathscr{D}_{10}$ has two special points $0$ and $\infty$. 
The Picard--Fuchs operator $\tilde{\mathscr{D}}_{10}$ around $\infty$ with respect to the new variable $\tilde{\phi}=1/(\phi2^{12})$ is
\begin{align}
\tilde{\mathscr{D}}_{10}
=&\tilde{\Theta}^4-2^{4}\tilde{\phi}(704\tilde{\Theta}^4+928\tilde{\Theta}^3+612\tilde{\Theta}^2+148\tilde{\Theta}+13)\notag \\
&+2^{12}\tilde{\phi}^{2}(5856\tilde{\Theta}^4+4704\tilde{\Theta}^3-1632\tilde{\Theta}^2-972\tilde{\Theta}-121)\notag \\
&-2^{20}5\tilde{\phi}^{3}(2752\tilde{\Theta}^4+96\tilde{\Theta}^3-60\tilde{\Theta}^2+24\tilde{\Theta}+7)
+2^{28}5^{2}\tilde{\phi}^{4}(2\tilde{\Theta}+1)^{4}. \notag
\end{align}
This is the Calabi--Yau equation of No.271 listed in \cite{van1}. 
It is unknown whether or not there exists a Calabi--Yau threefold with topological invariants predicted in \cite{van2}. \\

A general member of the one-parameter families constructed in this section is quite singular just as the degree 13 case. 
It is still unsettled whether or not a general member admits any crepant resolution. 
Hence our verification of mirror phenomena is again based on the monodromy calculation of the Picard--Fuchs equation of the our special one-parameter family \cite{van2}. 
\begin{Conj}
The BPS invariants of the pfaffian Calabi--Yau threefold $X_{i} \ (i=5,7,10)$ coincides with the numbers $n_{d}^{g} \ (d\in \N)$ listed in Appendix as mirror symmetry predicts.
\end{Conj}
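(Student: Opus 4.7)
The plan is to verify the conjecture by combining a direct enumerative computation on the A-side with the mirror predictions already extracted from the Picard-Fuchs operators $\mathscr{D}_{i}$. For each $i \in \{5,7,10\}$, the approach parallels the one executed for $X_{13}$ in Section 3.3: the mirror invariants $\{n_{d}^{g}\}$ have been computed from $\mathscr{D}_{i}$ using the mirror map at $\phi = 0$ and the BCOV formula, so what remains is to produce matching numbers from the Gromov-Witten theory of $X_{i}$.

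First, for $n_{1}^{0}$ I would parametrize a degree one map $\mathbb{P}^{1} \to \mathbb{P}_{\mathbf{w}_{i}}$ by writing the $k$-th homogeneous coordinate as $\sum_{j=0}^{w_{k}} a_{j}^{(k)} u^{j} v^{w_{k}-j}$, respecting the weight grading of $\mathbb{P}_{\mathbf{w}_{i}}$. Imposing $P_{\ell}(\mathbf{x}(u,v)) \equiv 0$ for $\ell = 1,\dots,5$ (with $P_{\ell}$ the pfaffians of the generic matrix $N$ defining $X_{i}$) yields a system of polynomial equations in the coefficients $a_{j}^{(k)}$. After killing $\mathrm{Aut}(\mathbb{P}^{1})$ and the overall rescaling, the resulting ideal in the remaining parameters should be zero-dimensional of degree $n_{1}^{0}$ as listed in the Appendix; this is in principle verifiable in Macaulay2 for a generic choice of $N$, exactly as was done for $X_{13}$. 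An analogous scheme-theoretic count should work at $d=2$, although the dimension of the parameter space grows rapidly.

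For higher-degree genus zero, and for any genus one contribution, the naive enumerative attack becomes intractable and one needs a structured A-model framework. The realistic route is to use the pfaffian resolution to realize $\overline{M}_{g,0}(X_{i},d)$ as a degeneracy locus inside $\overline{M}_{g,0}(\mathbb{P}_{\mathbf{w}_{i}},d)$ and to pull back a suitable Euler-like class, or to establish a Givental-type mirror theorem equating the $J$-function of $X_{i}$ with the normalized period $\Phi_{0}$ of $\check{\mathscr{X}}_{i}$. A torus action on $\mathbb{P}_{\mathbf{w}_{i}}$ permits virtual localization, but since a generic $N$ is not $T$-invariant one must work $T$-equivariantly with a deformed family and pass to the limit.

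The main obstacle, highlighted already by the author, is that the mirror $\check{\mathscr{X}}_{i}$ is a family of highly singular threefolds with no known crepant resolution; consequently the B-model side lacks a rigorous Hodge-theoretic interpretation, and standard results of Givental or Lian-Liu-Yau cannot be invoked off the shelf to identify the quantum differential equation of $X_{i}$ with $\mathscr{D}_{i}$. The most promising way around this is a Hori-Tong style non-abelian GLSM description for the pfaffian Calabi-Yau threefolds (flagged at the end of Section 1), which would furnish a phase-independent partition function interpolating between the geometric phase recovering $X_{i}$ and a Landau-Ginzburg phase producing the singular mirror; the conjecture would then reduce to matching two physically well-defined generating functions against the tabulated BPS invariants, sidestepping the absence of a crepant resolution.
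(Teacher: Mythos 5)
The statement you are asked to prove is labelled a \emph{Conjecture} in the paper, and the paper does not prove it: the author's stated evidence is only (i) that the Picard--Fuchs operators $\mathscr{D}_{5},\mathscr{D}_{7},\mathscr{D}_{10}$ of the candidate mirror families reproduce the expected Calabi--Yau equations of \cite{van1}, and (ii) that the invariants $\int_{X}H^{3}$, $\int_{X}c_{2}(X)\cdot H$, $\int_{X}c_{3}(X)$ read off from the monodromy match those of $X_{i}$. Your proposal is likewise not a proof but a research program, and several of its key steps would not go through as stated. Most seriously, the suggestion to realize $\overline{M}_{g,0}(X_{i},d)$ as a degeneracy locus in $\overline{M}_{g,0}(\mathbb{P}_{\mathbf{w}_{i}},d)$ and ``pull back a suitable Euler-like class'' is exactly what fails for pfaffian varieties: since $X_{i}$ is not the zero locus of a section of a vector bundle on the ambient space (it is cut out by the $4\times 4$ pfaffians of $N$, which are not a regular sequence), there is no bundle on the space of stable maps whose Euler class computes the virtual class, and the quantum Lefschetz / Givental--Lian--Liu--Yau machinery is unavailable. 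This is the precise reason the paper can only \emph{conjecture} the coincidence rather than prove a mirror theorem. The direct parameter count you describe for $d=1$ is the heuristic the paper carries out for $X_{13}$ (obtaining $647$), but it is not performed for $X_{5},X_{7},X_{10}$ in the paper, it verifies at most $n_{1}^{0}$, and even there one must justify that the scheme of degree-one maps is reduced and zero-dimensional for generic $N$ before its degree can be equated with a BPS invariant. Finally, the GLSM route is a physical, not mathematical, argument, and the B-side remains problematic because the singular members of $\check{\mathscr{X}}_{i}$ admit no known crepant resolution, so even the identification of $\mathscr{D}_{i}$ with a variation of Hodge structure of an honest Calabi--Yau mirror is open.

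In short: there is no proof in the paper to compare against, and your proposal, while it correctly identifies the relevant difficulties, contains a step (the Euler-class/degeneracy-locus reduction) that cannot work in the non-complete-intersection setting and otherwise defers the substance of the argument to unestablished mirror theorems or physics. The conjecture remains open; the only rigorous content currently available is the computation of the Picard--Fuchs operators and the consistency checks of topological invariants already recorded in Sections 2 and 4.
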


\section{Another Example}
Although we could not find any more (new) smooth pfaffian Calabi--Yau threefolds in weighted projective spaces, 
there is an interesting example $X_{9}$ defined as follows. 
\begin{Def}
Define a degree $9$ Calabi--Yau threefold $X_{9} \subset \mathbb{P}_{(1^{6},2)}$ as a pfaffian variety associated to the locally free sheaf 
$\mathscr{E}_{9}=\mathscr{O}_{\mathbb{P}_{(1^{6},2)}}(2)\oplus \mathscr{O}_{\mathbb{P}_{(1^{6},2)}}(1)^{\oplus2}\oplus \mathscr{O}_{\mathbb{P}_{(1^{6},2)}}^{\oplus2}$.
 \end{Def}
 
$X_{9}$ turns out to be isomorphic to a complete intersection Calabi--Yau threefold $\mathbb{P}^{5}_{3^{2}}$. 
Therefore $X_{9}$ admits a twofold interpretation.  
If we regard $X_{9}$ as a pfaffian Calabi--Yau threefold, we can apply to it the orbifold mirror construction we studied in the preceding sections. 

\begin{Def}
Define $\check{\mathscr{X}}_{9}=\{\check{X}_{9,t}\}_{t\in \mathbb{P}^{1}}$ as the one-parameter family of degree 9 pfaffian Calabi--Yau threefolds $\check{X}_{9,t}$ 
associated to the following special skew-symmetric $5\times 5$ matrix $N_{9,t}$ parametrized by $t \in \mathbb{P}^{1}$.
$$
 N_{9,t}= \begin{pmatrix}
                0           & x_{0}x_{1}x_{2}          & 0     & tx_{6}    & x_{3}x_{4} \\
                -x_{0}x_{1}x_{2}  & 0                            & x_{6}    & t(x_{3}+x_{4})   & tx_{5}\\
                0                     & -x_{6}                  & 0              & x_{5}   & t(x_{0}+x_{1}+x_{2}) \\
                -tx_{6}           & -t(x_{3}+x_{4})   & -x_{5}   & 0           &  1\\
                x_{3}x_{4}    & -tx_{5}                  & -(x_{0}+x_{1}+x_{2})    & -1  & 0 \\
                \end{pmatrix}
 $$
 \end{Def}
$\check{X}_{9,t}$ is actually a complete intersection Calabi--Yau threefold defined by the quadric $P_{0}$ and the two cubics $P_{1}$ and $P_{2}$. 
This one-parameter family $\check{\mathscr{X}}_{9}$ is not isomorphic to the conventional mirror family of $\mathbb{P}^{5}_{3^{2}}$ defined by
 \begin{gather}
 x_{0}x_{1}x_{2}+t(x_{3}^{3}+x_{4}^{3}+x_{5}^{3}) \notag\\
 x_{3}x_{4}x_{5}+t(x_{0}^{3}+x_{1}^{3}+x_{2}^{3}). \notag
 \end{gather}
A general member of this family is a smooth Calabi--Yau threefold, while a general member of $\check{\mathscr{X}}_{9}$ is singular along a curve. 
It is, however, observed that the two families share the same normalized period integral and Picard--Fuchs operator 
$$
\Phi_{0}(\phi)= \sum_{n=0}^{\infty}\binom{3n}{n}^{2}\binom{2n}{n}^{2}\phi^{n}, \ \ \
\mathscr{D}_{9}=\Theta^{4}-3^{2}\phi(3\Theta+1)^2(3\Theta+2)^2, 
$$
where we put $\phi=t^{8}$. 
These two families may bridge the two mirror constructions. 
\\

It is classically known that a mirror family for a given family Calabi--Yau threefolds can be constructed by taking special loci of the initial family, 
which are not necessarily on the Fermat points emphasized by the initial construction inspired by the conformal field theories. 
For more details we refer the reader to \cite{dgj} and the reference therein.\\

\newpage

\section*{Appendix}
\ \\

\begin{center}
 \begin{tabular}{| c | l | l |}
 \hline
 \multicolumn{3}{| c |}{$X_{5}$}\\
  \hline
 $d$  &  $n_{d}^{0}$ &  $ n^{1}_{d}$  \\ \hline
 1      &   2220                                                                    &   0\\
 2      &   285520                                                               &   460\\
 3      &    95254820                                                         &   873240\\
 4      &   47164553340                                                  &   1498922677\\
 5      &   28906372957040                                          &   2306959237408\\
\hline
 \end{tabular}
  \end{center}
  
  \ \\
 \\

 \begin{center}
 \begin{tabular}{| c | l | l |} 
 \hline
 \multicolumn{3}{| c |}{$X_{7}$}\\
 \hline
 $d$  &  $n_{d}^{0}$  &  $n_{d}^{1}$ \\ \hline
 1      &   1434   &   0 \\
 2      &   103026   &   26  \\
 3      &   18676572  &    53076  \\
 4      &   4988009280  &   65171063  \\
 5      &   1646787631350     &   63899034076  \\
 \hline
 \end{tabular}
  \end{center}
 \ \\
 \\
 
  \begin{center}
 \begin{tabular}{| c | l | l |} 
 \hline
 \multicolumn{3}{| c |}{$X_{10}$}\\
 \hline
 $d$  &  $n_{d}^{0}$ &  $n_{d}^{1}$\\ \hline
 1      &   888                                                      &  0 \\
 2      &  33084                                                  &  1\\
 3      &  3003816                                             &   2496\\
 4      &  399931068                                        &   2089393\\
 5      &   65736977760                                  &   1210006912\\
  \hline \hline
 
 $d$  &  $\tilde{n}_{d}^{0}$ &  $\tilde{n}_{d}^{1}$\\ \hline
 1      &  2400$a$        & 40\\
 2      &  1829880$a$         & 138040 \\                             
 3      &  2956977632$a$     &  687719624\\
 4      &  7117422755016$a$   &  3822563543952\\
 5       &  21319886408804640$a$    &  21893828822263288\\
  \hline
 \end{tabular}
 \end{center}
 \footnote{$a$ is expected to be $2$ in \cite{van2}.}

\newpage

\par\noindent{\scshape \small
Department of Mathematics, University of British Columbia \\
51984 Mathematics Rd , Vancouver, BC, V6T 1Z2, CANADA.}
\par\noindent{\ttfamily kanazawa@math.ubc.ca}


\begin{thebibliography}{99} 
                \bibitem{BCOV}M. Bershadsky, S. Cecotti, H. Ooguri and C. Vafa, Holomorphic anomolies in topological field theories (with an appendix by S. Katz), Nucl. Phys. B 405 (1993), 279-304.
                \bibitem{boehm}J. B\"{o}hm, Mirror symmetry and tropical geometry, arXiv:0708.4402v1.
                \bibitem{boffi}G. Boffi and D. Buchsbaum, Threading homology through algebra: selected patterns, Oxford Mathematical Monographs. Oxford University Press, 2006.
                \bibitem{bc}L. Borisov and A. C\u{a}ld\u{a}raru, The Pfaffian-Grassmannian derived equivalence, J. Algebraic Geom. 18 (2009), no. 2, 201-222.
                \bibitem{be}D. A. Buchsbaum and D. Eisenbud, Algebra structures for finite free resolutions, and some structure theorems for ideas of codimension 3, Amer. J. Math. 99 (1977), no. 3, 447-485.
                \bibitem{COGP}P. Candelas, X.C. de la Ossa, P.S. Green, and L. Parkes, A pair of Calabi--Yau manifolds as an exactly solvable superconformal theory, Nuclear Phys. B 359 (1991), no. 1, 21-74.
                \bibitem{cox}D. A. Cox and S. Katz, Mirror Symmetry and Algebraic Geometry, Mathematical Surveys and Monographs, 68. American Mathematical Society, Providence, RI, 1999. xxii+469 
                \bibitem{dgj}C. Doran, B. Greene and S. Judes, Families of Quintic Calabi--Yau 3-Folds with Discrete Symmetries, Comm. Math. Phys. 280 (2008), no. 3, 675-725. 
                \bibitem{fa}I. Fausk, Pfaffian Calabi--Yau threefolds, Stanley--Reisner schemes and mirror symmetry, Ph.D. thesis submitted to the University of Oslo, 2012, arXiv:1205.4871v1 
                \bibitem{mac} D. Grayson and M. Stillman, Macaulay 2, http://www.math.uiuc.edu/Macaulay2/
                \bibitem{ht}K. Hori and D. Tong, Aspects of Non-Abelian Gauge Dynamics in Two-Dimensional N=(2,2) Theories, J. High Energy Phys. 2007, no. 5, 079, 41 pp. (electronic). 
                \bibitem{hos}S. Hosono and Y. Konishi, Higher genus Gromov--Witten invariants of the Grassmannian, and the Pfaffian Calabi--Yau threefolds, Adv. Theor. Math. Phys. 13 (2009), no. 2, 463-495.
                \bibitem{sk}S. Katz, Small resolutions of Gorenstein threefold singularities, Algebraic Geometry: Sundance 1988, Contemporary Mathematics 116, American Mathematical Society, 61-70. 
                \bibitem{rod}E. R\o dland, The Pfaffian Calabi--Yau, its Mirror, and their Link to the Grassmannian Gr$(2,7)$, Compositio Math. 122 (2000), no. 2, 135-149.
                \bibitem{SS}M. Shimizu and H. Suzuki, Open mirror symmetry for pfaffian Calabi--Yau 3-folds, J. High Energy Phys. 2011, no. 3, 083, 49 pp. 
                \bibitem{tjo}E. Tj\o tta, Quantum Cohomology of a Pfaffian Calabi--Yau Variety: Verifying Mirror Symmetry Predictions, Compositio Math. 126 (2001), no. 1, 79-89.
                \bibitem{to} F. Tonoli, Construction of Calabi--Yau 3-folds in $\mathbb{P}^{6}$, J. Algebraic Geom. 13 (2004), no. 2, 209-232.  		
                \bibitem{van1}C. van Enckevort and D. van Straten, Electronic data base of Calabi--Yau equations, http://enriques.mathematik.uni-mainz.de/CYequations/
		\bibitem{van2}C. van Enckevort and D. van Straten, Monodromy calculations of fourth order equations of Calabi--Yau type, Mirror symmetry V, 539-559, AMS/IP Stud. Adv. Math., 38, Amer. Math. Soc. Province, RI, 2006. 
		\bibitem{Wall}C.T.C. Wall, Classification Problems in Differential Topology V. On Certain 6-Manifolds, Invent. Math. 1 (1966), 355-374; corrigendum, ibid 2 1966 306.
\end{thebibliography}
\end{document}